\definecolor{orange}{rgb}{1,0.5,0}
\theoremstyle{plain}
\newtheorem{theorem}{Theorem}[section]
\newtheorem{proposition}[theorem]{Proposition}
\newtheorem{definition}[theorem]{Definition}
\newtheorem{corollary}[theorem]{Corollary}
\newtheorem{remark}[theorem]{Remark}
\newtheorem{lemma}[theorem]{Lemma}
\definecolor{lavanda}{rgb}{0.66, 0.73, 1.0}
\def\eps{\epsilon}
\newcommand{\rn}[1]{{\mathbb R}^{#1}}
\newcommand{\R}{\mathbb R}
\newcommand{\G}{\mathbb G}
\newcommand{\supp}{\mathrm{supp}\;}
\newcommand{\he}[1]{{\mathbb H}^{#1}}
\newcommand{\scal}[2]{\langle {#1} , {#2}\rangle}
\newcommand{\Scal}[2]{\langle {#1} \vert {#2}\rangle}
\newcommand{\scalp}[3]{\langle {#1} , {#2}\rangle_{#3}}
\newcommand{\ccheck}{{\vphantom i}^{\mathrm v}\!\,}
\newcommand{\mc}{\mathcal }
\newcommand{\mfrak}{\mathfrak}
\newcommand{\covH}[1]{{\bigwedge\nolimits^{#1}{\mfrak h}}}
\newcommand{\N}{\mathbb N}
\begin{document}


\title[
] 
{$L^1$-Poincar\'e inequalities for differential forms on Euclidean spaces and Heisenberg groups}

\author[Annalisa Baldi, Bruno Franchi, Pierre Pansu]{
Annalisa Baldi\\
Bruno Franchi\\ Pierre Pansu
}
\begin{abstract}
In this paper, we prove interior Poincar\'e and Sobolev inequalities in Euclidean spaces and in Heisenberg groups, in the limiting case where the exterior (resp. Rumin) differential of a differential form is measured in $L^1$ norm. Unlike for $L^p$, $p>1$, the estimates are doomed to fail in top degree. The singular integral estimates are replaced with inequalities which go back to Bourgain-Brezis in Euclidean spaces, and to Chanillo-van Schaftingen in Heisenberg groups. 
\end{abstract}
 
\keywords{Heisenberg groups, differential forms, Sobolev-Poincar\'e inequalities, contact  manifolds, homotopy formula}

\subjclass{58A10,  35R03, 26D15,  43A80,
46E35, 35F35}

\maketitle

%

\section{Introduction}

\subsection{$L^1$-Sobolev and Poincar\'e inequalities}

The well known Sobolev inequalities on $\R^n$ states that for every $1\leq p<n$, there exists a constant $C(n,p)$ such that all smooth compactly supported functions $u$ on $\R^n$ satisfy
$$
\|u\|_q\leq C(n,p)\,\|\nabla u\|_p \quad \text{provided}\quad \frac{1}{p}-\frac{1}{q}=\frac{1}{n} \qquad(p\!-\!Sobolev).
$$
The most important of these inequalities is ($1\!-\!Sobolev$). Indeed, ($1\!-\!Sobolev$) implies all inequalities ($p\!-\!Sobolev$), $p<n$. Furthermore, ($1\!-\!Sobolev$) is equivalent to the isoperimetric inequality for smooth bounded domains $A$ of $\R^n$ (Federer-Fleming's theorem, \cite{federer_fleming}),
$$
\textrm{volume}(A)^{(n-1)/n}\leq C(n,1)\,\textrm{area}(\partial A),
$$
(with the same constant). Similarly, for noncompactly supported functions, a Poincar\'e inequality holds for $1\leq p<n$: there exists a constant $c_u$ such that 
$$
\|u-c_u\|_q\leq C(n,p)\,\|\nabla u\|_p \quad \text{provided}\quad \frac{1}{p}-\frac{1}{q}=\frac{1}{n}\qquad(p\!-\!Poincar\acute{e}).
$$

We investigate generalizations of these inequalities to differential forms. More precisely, we ask whether, given a closed differential $h$-form $\omega$ in $L^p(\R^n)$,  there exists an $(h-1)$-form $\phi$ in $L^q(\R^n)$ with $\frac{1}{p}-\frac{1}{q}=\frac{1}{n}$ such that $d\phi=\omega$ and 
$$
\|\phi\|_q\leq C(n,p,h)\,\|\omega\|_p.
$$
If $p>1$, the easy proof consists in putting $\phi=d^*\Delta^{-1}\omega$. Here, $\Delta^{-1}$ denotes the inverse of the Hodge Laplacian $\Delta=d^*d+dd^*$
and $d^*$ is the formal $L^2$-adjoint of $d$. The operator $d^*\Delta^{-1}$ is given by convolution with a homogeneous kernel of type $1$ in the terminology of \cite{folland} and \cite{folland_stein}, hence it is bounded from $L^p$ to $L^q$ if $p>1$. 
Unfortunately, this argument does not suffice for $p=1$ since, by \cite{folland_stein}, Theorem 6.10,  $d^*\Delta^{-1}$ maps $L^1$ only into the weak Marcinkiewicz space $L^{n/(n-1), \infty}$. Upgrading from $L^{n/(n-1), \infty}$ to $L^{n/(n-1)}$ is possible for functions. Indeed, for characteristic functions of sets, the $L^{n/(n-1), \infty}$ and $L^{n/(n-1)}$ norms coincide, and every function is the sum of characteristic functions of its superlevel sets (see \cite{LN}, \cite{FGaW}, \cite{FLW_grenoble}). 

This trick does not seem to generalize to differential forms.

Note that locally, $d^*\Delta^{-1}$ maps $L^1$ to $L^q$ for all $q<n/(n-1)$, but this does not lead to a scale invariant inequality. 

\subsection{Analysis of $L^1$-differential forms}

In fact, ($1\!-\!Poincar\acute{e}$) fails in degree $n$. There is an obvious obstruction:  $n$-forms belonging to $L^1$ and with nonvanishing integral cannot be differentials of $L^{n/(n-1)}$ forms, see \cite{Tripaldi}. But even if integral vanishes, a primitive $\phi$ such that $\|\phi\|_{q}\leq C\,\|\omega\|_1$ need not exist, with $1-\frac{1}{q}=\frac{1}{n}$. Indeed, if so, then, for every smooth function $u$ on $\R^n$, one could write, for every $n$-form $\omega\in L^1$ with vanishing integral,
$$
|\int u\omega|=|\int u\,d\phi|=|du\wedge\phi|\leq \|du\|_n\|\phi\|_{q}\leq C\,\|du\|_n\|\omega\|_{1},
$$
which would imply (by Hahn-Banach theorem) the existence of a constant $c_u$ such that $\|u-c_u\|_{\infty}\leq C\,\|du\|_n$. Such a ($n\!-\!Sobolev$) inequality does not hold, since $\R^n$ is $n$-parabolic, i.e. for every compact subset $K$ and every $\eps>0$, there exists a smooth compactly supported function $\chi$ on $\R^n$ such that $\chi\geq 1$ on $K$ and $\int_{\R^n}|d\chi|^n <\eps$,
(see \cite{evans_gariepy} Section 4.7).

Surprisingly, Poincar\'e and Sobolev inequalities persist sometimes for $p=1$. The first result appeared in \cite{Bourgain-Brezis-JEMS}, whose Theorem 2 states that, if $\vec{f}$ is a divergence free vectorfield in $L^1(\R^n)$, then the solution of $\Delta\vec{u}=\vec{f}$ satisfies $\nabla\vec{u}\in L^{n/(n-1)}$. In differential form notation, this means that $\nabla\Delta^{-1}$ restricted to closed $(n-1)$-forms is bounded from $L^1$ to $L^{n/(n-1)}$. \emph{A fortiori}, so is $d^*\Delta^{-1}$, this proves ($1\!-\!Poincar\acute{e}$) in degree $n-1$. 

\subsection{Results}

In this paper, we prove ($1\!-\!Poincar\acute{e}$) for $h$-forms of degree $h<n$
in de Rham's complex $(\Omega^\bullet,d)$.  We rely on Lanzani-Stein's observation (see \cite{LS}) that the duality estimate (emphasized by van Schaftingen \cite{vS2014}) underlying Bourgain-Brezis' result descends from $(n-1)$-forms to forms of lower degree, and the resulting Gagliardo-Nirenberg inequalities. 

Remarkably, this approach generalizes to the non-commutative Heisenberg groups $\he n$ 
equipped with Rumin's complex $(E_0^\bullet,d_c)$. Indeed, when passing to Heisenberg groups, we can
use Lanzani-Stein's type arguments proved in \cite{BF7}, \cite{BFP2}. Precise definitions of Heisenberg groups and related properties as
well as of Rumin's complex, can be found in Section \ref{heisenberg}. 

In the Euclidean setting, the integral obstruction generalizes to forms in every degree: if a closed 
$L^1$-form $\omega$ is the differential of a form in $L^{n/(n-1)}(\R^n)$, then for every constant 
coefficient form $\beta$ of complementary degree, $\int\omega\wedge\beta=0$. Therefore we 
introduce the subspace $L_0^1$ of $L^1$-differential forms satisfying these conditions (we call 
them forms \emph{with vanishing averages}).
In Heisenberg groups, constant coefficient forms must be replaced with left-invariant Rumin forms.

We can state our main results. We stress that, in (1) below we are dealing with usual de Rham forms, whereas in
 (2) we are dealing with Rumin's complex.
\begin{theorem}[Global Poincar\'e and Sobolev inequalities]\label{poincareglobal}

We have:

\begin{enumerate}
 \item Euclidean case $\R^n$. Let $h=1,\ldots,n-1$ and set $q=n/(n-1)$. For every closed $h$-form $\alpha\in L_0^1(\R^n)$, there exists an $(h-1)$-form $\phi\in L^{q}(\R^n)$, such that 
$$
d\phi=\alpha\qquad\mbox{and}\qquad \|\phi\|_{q}\leq C\,\|\alpha\|_{1}.
$$
\item Heisenberg case $\he n \equiv \R^{2n+1}$. Let $h=1,\ldots,2n$ and set $q=(2n+2)/(2n+1)$ if $h\not=n+1$ and $q=(2n+2)/(2n)$ if $h=n+1$. For every $d_c$-closed $h$-form $\alpha\in L_0^1(\he n)$, there exists an $(h-1)$-form $\phi\in L^q(\he n)$, such that 
$$
d_c\phi=\alpha \qquad\mbox{and}\qquad \|\phi\|_{q}\leq C\,\|\alpha\|_{1}.
$$
Furthermore, in both cases, if $\alpha$ is compactly supported, so is $\phi$.
\end{enumerate}

\end{theorem}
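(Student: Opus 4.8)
The plan is to reduce the global Poincar\'e inequality to a local homotopy formula, and then sum up local estimates using a partition-of-unity-type argument, taking care that the scale invariance is preserved. Both the Euclidean and the Heisenberg cases will follow from the same structural argument, with the analytic input being the Lanzani-Stein/Bourgain-Brezis type duality estimate in $\R^n$ and its Heisenberg analogue from \cite{BF7}, \cite{BFP2}; I will treat them uniformly by writing $d$ for either $d$ or Rumin's $d_c$, $Q$ for the homogeneous dimension ($Q=n$ in the Euclidean case, $Q=2n+2$ in the Heisenberg case), and $q=Q/(Q-1)$ except in the exceptional middle degree of $\he n$ where the two horizontal derivatives that are "lost" force $q=Q/(Q-2)$.

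\medskip

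The first step is the construction of a bounded homotopy operator. One seeks operators $K$ (of the appropriate order) such that on $h$-forms one has the homotopy identity $\alpha = dK\alpha + Kd\alpha$; taking $K$ to be built from $d^*\Delta^{-1}$ (respectively its Rumin analogue $d_c^*\Delta^{-1}$ using Rumin's Laplacian), closedness of $\alpha$ gives $\alpha = d(K\alpha)$, so $\phi := K\alpha$ is the desired primitive provided $K$ is bounded $L^1_0 \to L^q$. This boundedness is exactly where the $L^1_0$ hypothesis enters: the kernel of $K$ is of type $1$ (order $1$ in the homogeneous sense), so on all of $L^1$ it only maps into the weak space $L^{q,\infty}$, but the Lanzani-Stein refinement says that when restricted to closed forms, and after subtracting the finite-dimensional obstruction coming from pairings with constant-coefficient (resp. left-invariant Rumin) forms of complementary degree, one gains the strong $L^q$ estimate. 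The input facts I would invoke verbatim are the duality inequality of van Schaftingen / Bourgain-Brezis in the Euclidean case and the corresponding estimates of \cite{BF7}, \cite{BFP2} in the Heisenberg case; combined with a Gagliardo-Nirenberg interpolation (following Lanzani-Stein) these descend the $(Q-1)$-degree estimate to all degrees $h<Q$, which is why the statement is restricted to $h\le n-1$ (resp. $h\le 2n$).

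\medskip

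The second step handles the compact support assertion. If $\alpha$ is supported in a ball $B$, then $\phi = K\alpha$ is in general not compactly supported since $K$ is a nonlocal (convolution) operator. The standard fix is to correct $\phi$ by a closed form: since $\alpha$ is closed and supported in $B$, and since $d\phi = \alpha$ vanishes outside $B$, $\phi$ is closed on the complement of $B$; on the unbounded component of $\R^n\setminus B$ (which is connected and has trivial reduced cohomology in the relevant degrees) one can write $\phi = d\psi$ there, cut off $\psi$ away from $B$ with a smooth bump, and replace $\phi$ by $\phi - d(\chi\psi)$. One must check that this correction stays in $L^q$ with a controlled norm; here the vanishing-averages condition on $\alpha$ is again what guarantees that the relevant cohomology class of $\phi$ at infinity vanishes, so that $\psi$ exists, and a scaling/Poincar\'e estimate on the annulus where $\chi$ varies controls its $L^q$ norm by $\|\phi\|_q$, hence by $\|\alpha\|_1$. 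The exceptional degree $h=n+1$ in $\he n$ needs a small separate bookkeeping because $q$ is different there, but the argument is the same.

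\medskip

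I expect the main obstacle to be precisely the passage from the weak estimate to the strong one in step one --- i.e. genuinely using the closedness of $\alpha$ and the vanishing of its averages. For functions this is the trick of slicing into superlevel sets, which as the introduction emphasizes does not generalize to forms; the substitute is the Lanzani-Stein duality argument, and the delicate point is verifying that its hypotheses are met by $K$ in every degree $h<Q$ and, in the Heisenberg setting, that Rumin's complex (whose operators in the middle degree are second-order) still produces a kernel to which those estimates apply --- which is exactly the content that \cite{BF7}, \cite{BFP2} supply and which I would cite rather than reprove. A secondary technical nuisance, requiring care but not ingenuity, is making every estimate scale-invariant: since $K$ has a homogeneous kernel and the $L^1\to L^q$ bound with the stated exponent relation is dilation-invariant, this is automatic once the constant is tracked through, but the compact-support correction in step two must be done with cutoffs adapted to the scale of $B$ so as not to destroy homogeneity.
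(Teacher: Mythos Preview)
Your proposal has a genuine gap at the heart of step one. The Lanzani--Stein / Bourgain--Brezis / Gagliardo--Nirenberg inequality that you want to invoke (Theorem~\ref{bfp1} in this paper, and its Euclidean analogue) is an estimate for \emph{compactly supported} forms $u$ with $d_c^*u=0$: it says $\|u\|_q \le C\|d_c u\|_1$. Your candidate primitive $\phi = K\alpha = d_c^*\Delta^{-1}\alpha$ is \emph{not} compactly supported, so you cannot plug it in directly. You write that ``the Lanzani--Stein refinement says that when restricted to closed forms, and after subtracting the finite-dimensional obstruction \ldots\ one gains the strong $L^q$ estimate,'' but no such statement exists as a black box; this \emph{is} the theorem you are trying to prove, and the vanishing-averages hypothesis does not enter through Lanzani--Stein at all.

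The paper's actual mechanism is the one sketched in Section~\ref{scheme} and carried out in Theorem~\ref{core}: apply the Gagliardo--Nirenberg inequality not to $d_c^*\Delta^{-1}\alpha$ but to a mollified cut-off $v_{\eps,N} = J_\eps * d_c^*(\chi_N \Delta^{-1}\alpha)$, which \emph{is} compactly supported and co-closed. Then $d_c v_{\eps,N}$ splits into $\chi_N(d_c d_c^*\Delta^{-1}\alpha) = \chi_N\alpha$ (using Lemma~\ref{commbis}) plus a commutator term $[d_c d_c^*,\chi_N]\Delta^{-1}\alpha$ supported on the shell $B(2N)\setminus B(N)$. The commutator is a sum of operators of the form $N^{-k}\cdot(\text{kernel of type }k)$, and the key estimate --- Theorem~\ref{anuli}, proved in Section~\ref{kernels} --- says that for a kernel $K$ of type $\mu$ and $f\in L^1_0$, the quantity $R^{-\mu}\int_{B(2R)\setminus B(R)}|K*f|$ tends to $0$ as $R\to\infty$. \emph{This} is precisely where the vanishing-averages hypothesis enters. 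The garbage therefore vanishes in the limit, and Fatou's lemma transfers the uniform $L^q$ bound on $v_{\eps,N}$ to $d_c^*\Delta^{-1}\alpha$. Your partition-of-unity opening would also not work: patching local primitives does not give a scale-invariant global estimate.
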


We also prove local versions of these inequalities, of the following types (see Corollary \ref{poincare}). 

\begin{theorem}\label{poincareintro}
\begin{enumerate}
  \item Euclidean case.
For $h=1,\ldots,n-1$, let $q=n/(n-1)$. For every $\lambda>1$, there exists $C$ with the following property. Let $B(R)$ be a ball of radius $R$ in $\R^n$. 
  \begin{enumerate}
  \item Interior Poincar\'e inequality:
for every closed $h$-form $\alpha\in L^1(B(\lambda R))$, there exists an $(h-1)$-form $\phi\in L^q(B(R))$, such that 
$$
d\phi=\alpha_{|B(R)} \qquad\mbox{and}\qquad \|\phi\|_{L^q(B(R))}\leq C\,\|\alpha\|_{L^1(B(\lambda R))}.
$$
 
\item Sobolev inequality: for every closed $h$-form $\alpha\in L^1$ with support in $B(R)$, there exists an $(h-1)$-form $\phi\in L^q$, with support in $B(\lambda R)$, such that 
$$
d\phi=\alpha \qquad\mbox{and}\qquad \|\phi\|_{L^q(B(\lambda R))}\leq C\,\|\alpha\|_{L^1(B(R))}.
$$
\end{enumerate}
  
  \item Heisenberg case:
for $h=1,\ldots,2n$, let $q=(2n+2)/(2n+1)$ if $h\not=n+1$ and $q=(2n+2)/(2n)$ if $h=n+1$. There exist $\lambda>1$ and $C$ with the following property. Let $B(R)$ be a ball of radius $R$ in $\he n$. 
\begin{enumerate}
  \item Interior Poincar\'e inequality.
For every $d_c$-closed Rumin $h$-form $\alpha \in L^1(B(\lambda R))$, there exists an $(h-1)$-form $\phi\in L^q(B(R))$, such that 
$$
d_c\phi=\alpha_{|B(R)} \qquad\mbox{and}\qquad \|\phi\|_{L^q(B(R))}\leq C\,\|\alpha\|_{L^1(B(\lambda R))}.
$$
 
\item Sobolev inequality: for every $d_c$-closed Rumin $h$-form $\alpha\in L^1$ with support in $B(R)$, there exists an $(h-1)$-form $\phi\in L^q$, with support in $B(\lambda R)$, such that 
$$
d_c\phi=\alpha \qquad\mbox{and}\qquad \|\phi\|_{L^q(B(\lambda R))}\leq C\,\|\alpha\|_{L^1(B(R))}.
$$
\end{enumerate}
\end{enumerate}
\end{theorem}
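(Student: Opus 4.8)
The plan is to derive the local statements (Theorem~\ref{poincareintro}) from the global one (Theorem~\ref{poincareglobal}) by a standard cut-off argument, so the work splits into (a) a gluing/patching lemma that turns a form defined on a slightly larger ball into one defined on the whole space, and (b) careful bookkeeping of supports and norms under multiplication by cut-off functions. I would treat the Euclidean and Heisenberg cases in parallel, since Rumin's complex $(E_0^\bullet,d_c)$ behaves, for the purposes of this argument, formally like $(\Omega^\bullet,d)$: both admit a homotopy formula and the needed Lanzani--Stein/Chanillo--van Schaftingen duality estimates (from \cite{LS}, \cite{BF7}, \cite{BFP2}) used to prove Theorem~\ref{poincareglobal}.

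For the interior Poincar\'e inequality (part (1)(a) and (2)(a)), I would proceed as follows. Fix $\lambda>1$ and choose an intermediate radius, say $\lambda'=(1+\lambda)/2$, and a smooth cut-off $\chi$ with $\chi\equiv 1$ on $B(\lambda' R)$, $\supp\chi\subset B(\lambda R)$, and $|\nabla\chi|\lesssim 1/R$ (in the Heisenberg case, $|\nabla_H\chi|\lesssim 1/R$). Given a closed $h$-form $\alpha$ on $B(\lambda R)$, one first produces a primitive $\psi$ of $\alpha$ on $B(\lambda R)$ via the classical (bounded) homotopy operator on a ball — here one only needs $\psi\in L^1_{loc}$, not the sharp $L^q$ bound — and then considers $\tilde\alpha:=d(\chi\psi)=\chi\,\alpha+d\chi\wedge\psi$, which is a globally defined closed $h$-form, equal to $\alpha$ on $B(\lambda' R)$, supported in $B(\lambda R)$. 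The key point is that $\tilde\alpha$ may fail to have vanishing averages, so before applying Theorem~\ref{poincareglobal} one must correct it by subtracting a compactly supported closed form $\eta$ (a fixed finite-dimensional family, e.g. built from bump-smoothed left-invariant forms) with $\int\tilde\alpha\wedge\beta=\int\eta\wedge\beta$ for all relevant $\beta$; since $\tilde\alpha$ is exact on $\R^n$ (resp. on $\he n$, using contractibility / the known cohomology of Rumin's complex) these averages actually vanish automatically, so this correction step is vacuous in the global ambient space — the averages $\int\tilde\alpha\wedge\beta = \int d(\chi\psi)\wedge\beta = 0$ by Stokes, since $\beta$ is closed and $\chi\psi$ is compactly supported. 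Hence $\tilde\alpha\in L^1_0$, Theorem~\ref{poincareglobal} yields $\Phi\in L^q$ with $d\Phi=\tilde\alpha$ and $\|\Phi\|_q\le C\|\tilde\alpha\|_1$, and one sets $\phi:=\Phi|_{B(R)}$. It remains to bound $\|\tilde\alpha\|_1\le \|\chi\alpha\|_1+\|d\chi\wedge\psi\|_1\lesssim \|\alpha\|_{L^1(B(\lambda R))}+R^{-1}\|\psi\|_{L^1(B(\lambda R))}$, and the last term must be controlled by $\|\alpha\|_{L^1(B(\lambda R))}$; this is where one uses the scale-invariant $L^1\to L^1$ (or, more comfortably, $L^1\to L^{n/(n-1)}$ on the ball and then H\"older, absorbing the volume factor $R$) bound for the ball homotopy operator, which rescales correctly because the homotopy kernel is of type $1$.

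For the Sobolev inequality (part (1)(b) and (2)(b)), the input $\alpha$ is already compactly supported in $B(R)$, so it is globally defined; the only issue is again the vanishing-average condition and the support of the primitive. Vanishing averages hold for the same Stokes reason as above once one knows $\alpha$ is globally exact, which follows because a compactly supported closed form on $\R^n$ (resp. $\he n$) with vanishing periods is exact. Then Theorem~\ref{poincareglobal}, together with its final clause ("if $\alpha$ is compactly supported, so is $\phi$"), gives a primitive $\phi$ with compact support; a further cut-off localizes this support into $B(\lambda R)$ — concretely, if $\supp\phi$ is not yet inside $B(\lambda R)$, replace $\phi$ by $\phi-\text{(primitive of }d((1-\chi)\phi)\text{)}$ using that $d((1-\chi)\phi)$ is supported in the annulus and has vanishing averages, then apply the global theorem once more on that annular data. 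The quantitative bound propagates through each step with constants depending only on $\lambda$ and the dimension.

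The main obstacle, and the only genuinely non-formal point, is the control of the $L^1$ norm of the intermediate primitive $\psi$ (equivalently $\|d\chi\wedge\psi\|_1$) in a scale-invariant way: one cannot simply use the sharp Theorem~\ref{poincareglobal} bound for $\psi$ (that would be circular and would also require vanishing averages one does not yet have), so one needs a separate, softer primitive on the ball whose operator norm scales like $R$ from $L^1$ to $L^1$ (or like $R^{1/n}$-corrected from $L^1$ to $L^{n/(n-1)}$). In the Heisenberg setting this softer primitive must be built from the Rumin homotopy operator of \cite{BFP2}, which is again given by kernels of the correct homogeneous type under the anisotropic dilations, so the scaling works verbatim with $n$ replaced by the homogeneous dimension $2n+2$; the degree $h=n+1$ requires the modified exponent $q=(2n+2)/(2n)$ exactly as in Theorem~\ref{poincareglobal}, reflecting the jump in homogeneity of the Rumin differential across the middle dimension, and no extra argument is needed there beyond feeding the right $q$ into the global theorem.
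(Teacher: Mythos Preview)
Your strategy---build a rough local $L^1$ primitive, cut it off, feed the resulting globally defined exact form into Theorem~\ref{poincareglobal}---is exactly the paper's strategy (see Proposition~\ref{smoothing} and Corollary~\ref{poincare}). In the Euclidean case your sketch is complete: Iwaniec--Lutoborski on a ball gives an \emph{exact} primitive $\psi$ with $d\psi=\alpha$ and $\|\psi\|_{L^1(B(\lambda R))}\lesssim R\|\alpha\|_{L^1(B(\lambda R))}$, so $\tilde\alpha=d(\chi\psi)\in L^1_0$ by Lemma~\ref{by parts} and the global theorem finishes.

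In the Heisenberg case, however, your last paragraph hides a genuine gap. There is \emph{no} analogue of Iwaniec--Lutoborski for Rumin forms: the candidate you name, the truncated kernel $K_{0,R}$ of $d_c^*\Delta_{\mathbb H}^{-1}$, does map $L^1\to L^1$ with the right scaling, but on a ball it only yields $d_c\psi=\alpha-S\alpha$, not $d_c\psi=\alpha$, where $S\alpha$ is a smooth remainder coming from the tail $(1-\psi_R)k_0$ of the kernel. Your $\omega=d_c(\zeta\psi)$ therefore equals $\alpha-S\alpha$ on the inner ball, and the global theorem produces a primitive of $\alpha-S\alpha$, not of $\alpha$. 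The paper closes this gap by a separate argument for the smoothed piece (Corollary~\ref{poincare}): since $S\alpha\in W^{3,1}$ with norm $\lesssim\|\alpha\|_1$, one passes to de Rham via $\Pi_E$ (order $\le 1$, so $\Pi_E S\alpha\in W^{2,1}$), applies the Euclidean Iwaniec--Lutoborski homotopy there, and returns via $\Pi_{E_0}$; the Euclidean Sobolev embedding $W^{2,1}\hookrightarrow L^{Q/(Q-2)}\hookrightarrow L^q$ then controls this contribution. Your sentence ``no extra argument is needed there beyond feeding the right $q$ into the global theorem'' is therefore not correct for $\he n$.

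A second, smaller point you gloss over: when the primitive $\psi$ has degree $n$, the Leibniz formula for $d_c$ is second order (Lemma~\ref{leibniz}~ii)), so $[d_c,\zeta]\psi$ contains first horizontal derivatives of $\psi$, not just $\psi$ itself. One must check these are in $L^1$; this works only because in degree $n+1$ the kernel $K_{0,R}$ is of type $2$ rather than $1$ (see the end of the proof of Proposition~\ref{smoothing}). Your formula $\tilde\alpha=\chi\alpha+d\chi\wedge\psi$ is Euclidean and does not capture this.
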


Finally, we construct smoothing homotopies on Riemannian or contact subRiemannian manifolds of bounded geometry
(see \cite{dR}, Proposition 1, p. 77). Roughly speaking, a Riemannian manifold has $C^k$-bounded geometry if it admits an atlas of charts defined on the unit Euclidean ball, with uniformly bounded Lipschitz constant, and such that changes of charts have uniformly bounded derivatives up to order $k$. In the contact subRiemannian case, the models are unit Heisenberg balls, the charts are assumed to be contactomorphisms and only horizontal derivatives play a role. Details appear in Definition \ref{bded geometry bis}.

\begin{theorem}\label{bddgeometryintro}

\begin{enumerate}
\item Riemannian case: let $M$ be a Riemannian manifold of dimension $2n+1$ and bounded $C^k$-geometry,
where $k$ is an integer, $k\ge 2$. For $h=1,\ldots,n-1$, let $q=n/(n-1)$. Let $1\leq q'\leq q$. There exist operators $S$ and $T$ on $h$-forms on $M$ such that $S$ is bounded from 
$L^{1}$ to $W^{k-1,q'}$, $T$ is bounded from
$L^{1}\cap d^{-1}L^1$ to $L^{q'}$, and the homotopy identity $1=S+dT +Td$ holds on $L^1\cap d^{-1}L^1$. 

  \item SubRiemannian contact case: let $M$ be a subRiemannian contact manifold of dimension $2n+1$ and bounded $C^k$-geometry, where $k$ is an integer, $k\ge 3$. For $h=1,\ldots,2n$, let $q=(2n+2)/(2n+1)$ if $h\not=n+1$ and $q=(2n+2)/(2n)$ if $h=n+1$. Let $1\leq q'\leq q$. There exist operators $S$ and $T$ on $h$-forms on $M$ such that $S$ is bounded from 
$L^{1}$ to $W^{k-1,q'}$, $T$ is bounded from
$L^{1}\cap d^{-1}L^1$ to $L^{q'}$, and the homotopy identity $1=S+d_c T +Td_c$ holds on $L^1\cap d^{-1}_c L^1$. Furthermore, in degree $h=n+1$, $T$ is bounded from $W^{j-1,1}$ to $W^{j,1}$ for all $1\leq j\leq k-1$.
\end{enumerate}
\end{theorem}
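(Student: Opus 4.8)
The plan is to build the smoothing homotopy on a manifold of bounded geometry by gluing together local homotopies coming from Theorem~\ref{poincareintro} via a partition of unity subordinate to the atlas. First I would fix a locally finite atlas $\{(U_i,\varphi_i)\}$ of charts modeled on the unit ball (Euclidean in the Riemannian case, Heisenberg in the contact case), with uniformly bounded Lipschitz constants and change-of-chart derivatives up to order $k$, together with a subordinate partition of unity $\{\chi_i\}$ whose derivatives (horizontal derivatives, in the contact case) are uniformly bounded. The bounded geometry hypothesis guarantees that the multiplicity of the cover is bounded, which is what will let local $L^1\to L^{q'}$ estimates sum to a global one. On each chart, pull back to the model ball and apply the interior Poincar\'e inequality of Theorem~\ref{poincareintro}(1)(a) (resp. (2)(a)): for a form $\alpha$ this produces a primitive of $\alpha$ (if $\alpha$ is closed) on a slightly smaller ball, and more importantly, applied to $d\alpha$ on the larger ball, it produces via the usual Poincar\'e-to-homotopy argument a local operator $T_i$ with $\alpha = S_i\alpha + dT_i\alpha + T_i d\alpha$ where $S_i$ is a smoothing remainder. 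The standard device here is to run the local Poincar\'e estimate not on $\alpha$ directly but on $\alpha - (\text{smoothed }\alpha)$, so that $S_i$ is exactly the smoothing (mollification) operator, which automatically gains $k-1$ derivatives and maps $L^1$ into $W^{k-1,q'}$ on the model ball.

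Next I would assemble the global operators. Set $T = \sum_i \varphi_i^*\big(\chi_i\, T_i\, (\varphi_i^{-1})^*\big)$ and similarly for $S$, being careful with the Leibniz rule: $d(\chi_i T_i\beta) = d\chi_i\wedge T_i\beta + \chi_i\, dT_i\beta$, so when one sums $\sum_i d\chi_i\wedge T_i\beta$ these terms do \emph{not} telescope and must be absorbed. The clean way is the Chern--Simons / de Rham trick: define $T$ and $S$ so that the commutator terms $\sum_i d\chi_i \wedge T_i$ get folded into a correction which is itself smoothing, using that $\sum_i d\chi_i = 0$. Concretely one writes $1 = \sum_i \chi_i = \sum_i \chi_i(S_i + dT_i + T_i d)$ locally, and then rearranges; the terms involving $d\chi_i$ combine against the fact that $\sum_i d\chi_i\wedge(\cdot)$ annihilates constants, and what survives is an operator that again only involves the smoothing $S_i$'s and the $T_i$'s composed with bounded multiplication operators. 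In degree $h=n+1$ in the contact case one needs the additional $W^{j-1,1}\to W^{j,1}$ boundedness of the local $T_i$; this should follow from the subelliptic regularity already built into the construction of the local homotopy in Theorem~\ref{poincareintro}, since there $q = (2n+2)/(2n)$ and the relevant kernel gains one horizontal derivative.

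The main obstacle I expect is \emph{not} the algebra of the gluing but the uniform boundedness of the assembled operators: one must check that pulling back and pushing forward through the charts $\varphi_i$ does not blow up the $L^{q'}$ and $W^{k-1,q'}$ norms, and that the sum over $i$ converges in operator norm. This is where bounded geometry enters essentially: the uniform bounds on $\|D\varphi_i\|$, $\|D\varphi_i^{-1}\|$ and on derivatives of the transition maps up to order $k$ ensure each conjugated local operator is bounded with a constant independent of $i$, while the bounded multiplicity of $\{U_i\}$ (plus the disjointification $\ell^1 \hookrightarrow \ell^{q'}$ on a bounded-multiplicity cover, since $q'\ge 1$) turns a uniformly bounded family of local estimates into a single global estimate. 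In the contact case there is the extra subtlety that the charts are required to be contactomorphisms and only horizontal derivatives are controlled, so one must verify that $\varphi_i^*$ and $(\varphi_i^{-1})^*$ act boundedly on Rumin forms in the relevant $L^p$ and horizontal Sobolev spaces; this is exactly the kind of computation done in \cite{BFP2} and I would cite it. Finally, one checks the homotopy identity $1 = S + d_cT + Td_c$ holds on $L^1\cap d_c^{-1}L^1$ by density, starting from smooth compactly supported forms where all manipulations are classical, and extending by the continuity just established.
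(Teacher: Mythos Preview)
Your broad architecture---transport local homotopies via charts, sum against a partition of unity, absorb the Leibniz commutators $[\chi_i,d_c]T_i$ into $S$---matches the paper's construction. The genuine gap is in the regularity bookkeeping for $S$.

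You write that ``the terms involving $d\chi_i$ combine against the fact that $\sum_i d\chi_i\wedge(\cdot)$ annihilates constants,'' and that what survives is smoothing. This is where the argument breaks. The sum $\sum_i d\chi_i\wedge T_i\beta$ does \emph{not} telescope: the $T_i$'s are different operators in different charts, so the identity $\sum_i d\chi_i=0$ buys nothing. The commutator term $[\chi_i,d_c](\phi_i^{-1})^\# T\phi_i^\#$ genuinely survives in the global $S$, and it is \emph{not} smoothing in your sense: it is built from $T$, which maps $L^1$ only to $L^q$ (and, more precisely, to $W^{r,p}$ for some small $r>0$ and $p>1$ by the kernel estimates in Proposition~\ref{smoothing}), not to $W^{k-1,q'}$. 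In degree $n+1$ the situation is worse, since $[d_c,\chi]$ is first order and one must control a horizontal derivative of $T$; the paper handles this separately via the $WT$ estimates in Proposition~\ref{smoothing}.

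Consequently the assembled operator $\mathbf{S}$ gains only a \emph{fraction} of a derivative, not $k-1$ derivatives. The paper's remedy, which you are missing, is to \emph{iterate}: set $S_M=\mathbf{S}^\ell$ for $\ell$ large. One checks inductively that $\mathbf{S}^\ell=1-d_cT_\ell-T_\ell d_c$ with $T_{\ell+1}=T_\ell+\mathbf{T}-d_cT_\ell\mathbf{T}-T_\ell d_c\mathbf{T}$, and that $d_cT_\ell$, $T_\ell d_c$ remain bounded on $W^{s,p}$ while $\mathbf{S}^\ell$ climbs the Sobolev scale one fractional step at a time. Only after enough iterations does $S_M$ land in $W^{k-1,q'}$. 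Without this iteration, your global $S$ is not the operator the theorem promises.
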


Such local Poincar\'e inequalities and smoothing homotopies are the necessary ingredients in order to prove that Rumin's complex can be used to compute the $\ell^{q,1}$-cohomology of a subRiemannian contact manifold, see \cite{PPcup}. Therefore Theorem \ref{poincareglobal} has significance in geometric group theory, see Corollary \ref{GGT}.

This paper is organised as follows: in Section \ref{scheme} we provide a sketch of the proof
of Theorems \ref{poincareglobal} and \ref{poincare}. Section \ref{kernels} deals with continuity properties of homogeneous kernels in Carnot groups
and with function spaces. Most of the results are more or less known, except, as long as we know, for  Theorem \ref{anuli}.  Preliminary results on 
Heisenberg groups, Rumin's complex and Laplacians are gathered in Section \ref{heisenberg}. The proof of Theorem \ref{poincareglobal} is contained
in Section \ref{GNI} and relies on Gagliardo-Nirenberg type inequalities proved therein, and interior inequalities stated in Theorem \ref{poincare} are
proved in Section \ref{InIn} via suitable smoothing homotopy formulas. Finally, Sections \ref{bounded geometry} and \ref{SH} deal with
Riemannian and contact manifolds with bounded geometry.

\section{Scheme of proof}\label{scheme}

{  In this Section we sketch the proof of Theorems with more details in the Euclidean case,
whereas  the body of this paper will contain only the proofs
for differential forms in Heisenberg groups which require several further arguments.}

\subsection{Euclidean case}
Let $q=n/(n-1)$. According to Lanzani-Stein, in degrees $<n$, for smooth compactly supported forms $u$,
\begin{eqnarray}\label{GN}
\|u\|_q \leq C\,(\|du\|_1+\|d^*u\|_N),
\end{eqnarray}
where $\|\cdot\|_N$ denotes either $L^1$-norm (in degrees $\not=1$) or the norm of the real Hardy space $\mathcal{H}^1$ (in degree $1$).
Since the inverse of the Laplacian, $\Delta^{-1}$, commutes with $d$, the operator $K=d^*\Delta^{-1}$ satisfies $d K+Kd=1$ on smooth compactly supported forms. Given a closed form $\alpha\in L^1(\R^n)$, $u=K\alpha$ is not compactly supported, so cannot be directly plugged in (\ref{GN}). Therefore we use a smooth cut-off function $\chi$ and put
$$
\phi=d^*(\chi\Delta^{-1}\alpha).
$$
Then $\phi$ has compact support, $d^*\phi=0$ and 
$$
d\phi=[dd^*,\chi]\Delta^{-1}\alpha+\chi dd^*\Delta^{-1}\alpha=[dd^*,\chi]\Delta^{-1}\alpha+\chi \alpha.
$$
The point is to estimate the garbage term $\|[dd^*,\chi]\Delta^{-1}\alpha\|_1$. Notice that $[dd^*,\chi]$ is a first order differential operator, of the form $[dd^*,\chi]=P_0+P_1$ where $P_0$ has order $0$ and depends on second derivatives $\nabla^2\chi$ and $P_1$ has order $1$ and depends on first derivatives $\nabla\chi$ only. Both $P_0 \Delta^{-1}$ and $P_1 \Delta^{-1}$ have homogeneous kernels. 

Here comes our key trick. If $P$ is the operator of convolution with a kernel of type $\mu>0$, and $\alpha\in L^1$, then the $L^1$ norm of $P\alpha$ on shells $B(0,2R)\setminus B(0,R)$ is $O(R^\mu)$. If furthermore $\alpha\in L_0^1$, this can be improved to $o(R^\mu)$. 

Pick $\chi=\chi_R$ such that $d\chi_R$ is supported in the shell $B(0,2R)\setminus B(0,R)$, $|\nabla\chi_R |\leq \frac{1}{R}$ and $|\nabla^2\chi_R|\leq \frac{1}{R^2}$. Then $\|P_0 \Delta^{-1}\alpha\|_1$ and $\|P_1 \Delta^{-1}\alpha\|_1$ tend to $0$ as $R\to\infty$. Then $\|\phi\|_q$ stays uniformly bounded, yielding eventually that $d^*\Delta^{-1}\alpha\in L^q$, thanks to Fatou's theorem.

The local Poincar\'e inequality is based on Iwaniec-Lutoborsky's homotopy, \cite{IL}. This homotopy is defined by a kernel $k$ which belongs to $L^q$ in a neighborhood of the origin, for every $q<n/(n-1)$, but not for $q=n/(n-1)$. Fortunately, Young's inequality suffices to prove that a truncation of $k$ maps $L^1$ to $L^1$. This provides an $L^1$ local primitive for a closed form, up to a smoothed closed form, which belongs to $W^{1,1}$. The $L^1$ primitive is upgraded to $L^{n/(n-1)}$ using a cut-off and Theorem \ref{poincareglobal}. To the smoothed form, one can again apply Iwaniec-Lutoborsky's homotopy, which yields a form in $W^{1,1}$. The Sobolev embedding $W^{1,1}\subset L^{n/(n-1)}$ concludes the argument.

For further details in the Euclidean case, we refer to \cite{BFP4}.
\subsection{Heisenberg case}

We use Rumin's Laplacian $\Delta_{\mathbb H}$ on Rumin forms. It does not quite commute with Rumin's differential $d_c$ in degrees $n-1$ and $n+2$ but this turns out to be harmless. Write $K=d_c^*\Delta_{\mathbb H}^{-1}$ (with a modification in degrees $n$ and $n+1$), in order that $d_c K+Kd_c=1$ on smooth compactly supported forms. In spite of the complicated form of Leibniz' formula for $d_c$, the basic features of commutators $[d_c d_c^*,\chi]\Delta_{\mathbb H}^{-1}$ from the Euclidean case persist.

The local Poincar\'e inequality requires special care in the Heisenberg case, 
since no analogue of Iwaniec-Lutoborsky's homotopy exists. The kernel of 
$K=d_c^*\Delta_{\mathbb H}^{-1}$ is a valuable replacement. This provides again
a $L^1$ local primitive for a $d_c$-closed form, up to a smoothed $d_c$-closed form, which belongs to $W^{3,1}$. The $L^1$ primitive is upgraded to $L^q$ using a cut-off and Theorem \ref{poincareglobal} in the same manner. To the smoothed form, one can apply Rumin's homotopy, yielding a $W^{2,1}$ $d_c$-closed form, and then Iwaniec-Lutoborsky's Euclidean homotopy. The resulting form belongs to $L^q$, with $q=(2n+2)/(2n+1)$ if $h\not=n+1$ and $q=(2n+2)/(2n)$ if $h=n+1$, again by Sobolev embedding.

\subsection{Gaffney type inequality in Euclidean spaces}

If $p>1$, an alternative route to Poincar\'e's inequality could be to first establish a Gaffney type inequality: for every differential form $\phi$ such that $d\phi$ and $\delta\phi\in L^p$, 
$$
\|\nabla\phi\|_p\leq C\,(\|d\phi\|_p +\|\delta\phi\|_p).
$$
Combined with ($p\!-\!Poincar\acute{e}$) inequality for functions, $\|\phi-c_\phi\|_{np/(n-p)}\leq C\,\|\nabla\phi\|_p$, this implies ($p\!-\!Poincar\acute{e}$) for forms. Unfortunately, if $p=1$, Gaffney's inequality trivially holds for forms of degree $0$, but fails in every degree $\ge 1$. This follows from Ornstein's non-inequality, \cite{Ornstein}. Indeed, in degrees $\ge 1$, $\nabla\phi=\delta\phi+ d\phi+R\phi$, where all three components constitute a linearly independent collection of linear first order constant coefficient differential operators on $\R^n$. Therefore no universal inequality 
$$
\|R\phi\|_1\leq C\,(\|d\phi\|_1 +\|\delta\phi\|_1)
$$
can hold, even for forms with compact support in a fixed ball.

However, the following statement is still open for $h\not=1,n$ : in $\R^n$, for every closed differential $h$-form $\omega$ in $L^1$, does there exist an $(h-1)$-form $\phi$ such that $d\phi=\omega$ and
$$
\|\nabla\phi\|_1\leq C\,\|\omega\|_1\quad ?
$$
This is true if $L^1$ is replaced with Hardy space $\mathcal{H}^1$.

\section{Kernels}\label{kernels}

In Theorem \ref{poincareglobal}, the primitive $\phi$ of a closed form $\omega$ is provided by an operator defined by convolution with a homogeneous (matrix valued) function. We collect in this section the classical properties of such operators, especially their boundedness in function spaces in the Lebesgue and Sobolev scales. A special care will be taken of boundedness on $L^1$, a fact which is not standard.

This section applies to the wider class of Carnot groups, which contains both abelian and Heisenberg groups.

\subsection{Convolutions on Carnot groups}\label{7.1}

A {\it Carnot group $\G$ of
step $\kappa$}  is a connected, simply connected
Lie group whose Lie algebra
${\mathfrak{g}}$ admits a {\it step $\kappa$ stratification}, i.e.
there exist linear subspaces $V_1,...,V_\kappa$ such that
\begin{equation}\label{stratificazione}
{\mathfrak{g}}=V_1\oplus...\oplus V_\kappa,\quad [V_1,V_i]=V_{i+1},\quad
V_\kappa\neq\{0\},\quad V_i=\{0\}{\textrm{ if }} i>\kappa,
\end{equation}
where $[V_1,V_i]$ is the subspace of ${\mathfrak{g}}$ generated by
the commutators $[X,Y]$ with $X\in V_1$ and $Y\in V_i$. 
The exponential map is a one to one map from $\mathfrak
g$ onto $\G$. Using {\it exponential
coordinates}, we identify a point $p\in\G$ with the $N$-tuple $(p_1,\dots,p_N)\in
\R^N$ and we identify $\G$ with $(\R^N,\cdot)$ where the explicit
expression of the group operation $\cdot$ is determined by the
Campbell-Hausdorff formula (see, e.g., \cite{folland_stein}). In exponential coordinates
the unit element $e$ of $\G$ is $e=(0,\dots,0)$.

The first layer $V_1$ will be called {\sl horizontal layer}; a left-invariant
vector field in $V_1$, identified with a differential operator, will be
called an {\sl horizontal deerivative}.

From now on, we shall denote by $\{W_1,\dots,W_m\}$ a basis of $V_1$.

The $N$-dimensional Lebesgue measure $\mathcal L^n$, is the Haar
measure of the group $\G$.
For any $\lambda >0$, the
{\it dilation} $\delta_\lambda:\G\to\G$, is defined as
\begin{equation}\label{dilatazioni}
\delta_\lambda(x_1,...,x_N)=
(\lambda^{d_1}x_1,...,\lambda^{d_N}x_N),
\end{equation} where $d_i\in\N$ is called the {\it homogeneity of
the variable} $x_i$ in
$\G$ (see \cite{folland_stein} Chapter 1).
We denote by $Q$ the {\sl homogeneous dimension
of $\G$} defined by
\begin{equation}\label{dimensione}
Q:=\sum_{i=1}^\kappa i\,\text{dim}\,V_i.
\end{equation}

Through this paper we shall assume that $Q\ge 3$.

In this paper we denote by $ |\cdot|$ a homogeneous norm, smooth outside the origin, that
induces a genuine distance on $\G$ as in \cite{Stein}, p. 638. In the special case of $\G=\he n$, the
$n$-th Heisenberg group, this homogeneous norm is the Kor\'anyi norm $\rho$ (see \eqref{gauge}). Later on, we shall use the following
gauge distance:
$$
d(x,y)=|y^{-1}x|,
$$
and we denote by $B(x,R)$   the $d$-ball of radius R centred at $x$.

    Following e.g. \cite{folland_stein}, we can define a group
convolution in $\G$: if, for instance, $f\in\mc D(\G)$ and
$g\in L^1_{\mathrm{loc}}(\G)$, we set
\begin{equation}\label{group convolution}
f\ast g(p):=\int f(q)g(q^{-1}\cdot p)\,dq\quad\mbox{for $q\in \G$}.
\end{equation}
We remind that, if (say) $g$ is a smooth function and $P$
is a left invariant differential operator, then
$$
P(f\ast g)= f\ast Pg.
$$
  If $f$ is a real function defined in $\G$, we denote
    by $\ccheck f$ the function defined by $\ccheck f(p):=
    f(p^{-1})$, and, if $T\in\mc D'(\G)$, then $\ccheck T$
    is the distribution defined by $\Scal{\ccheck T}{\phi}
    :=\Scal{T}{\ccheck\phi}$ for any test function $\phi$.

We remind also that the convolution is again well defined
when $f,g\in\mc D'(\G)$, provided at least one of them
has compact support. In this case the following identities
hold
\begin{equation}\label{convolutions var}
\Scal{f\ast g}{\phi} = \Scal{g}{\ccheck f\ast\phi}
\quad
\mbox{and}
\quad
\Scal{f\ast g}{\phi} = \Scal{f}{\phi\ast \ccheck g}
\end{equation}
 for any test function $\phi$, where we use the notation $\Scal{\cdot}{\cdot}$ for the duality 
 between $\mc D' $ and $\mc D$.
 
{  As in \cite{folland_stein},
we also adopt the following multi-index notation for higher-order derivatives. If 
$
I =
(i_1,\dots,i_{2n+1})
$ 
is a multi--index, we set  
$W^I=W_1^{i_1}\cdots
W_{2n}^{i_{2n}}\;T^{i_{2n+1}}$. 
By the Poincar\'e--Birkhoff--Witt theorem, the differential operators $W^I$ form a basis for the algebra of left invariant
differential operators in $\G$. 
Furthermore, we set 
$$
|I|:=i_1+\cdots +i_{2n}+i_{2n+1}
$$
 the order of the differential operator
$W^I$, and   
$$
d(I):=i_1+\cdots +i_{2n}+2i_{2n+1}$$
 its degree of homogeneity
with respect to group dilations.

 Suppose now $f\in\mc E'(\G)$ and $g\in\mc D'(\G)$. Then,
 if $\psi\in\mathcal D(\G)$, we have
 $$\Scal{(W^If)\ast g}{\psi}=
 \Scal{W^If}{\psi\ast \ccheck g} =
 (-1)^{|I|}  \Scal{f}{\psi\ast (W^I \,\ccheck g)} =
 (-1)^{|I|} \Scal{f\ast \ccheck W^I\,\ccheck g}{\psi}.$$
 Thus
 \begin{equation}\label{convolution by parts}
 \begin{split}
\Scal{(W^If)\ast g}{\psi}&=
 \Scal{W^If}{\psi\ast \ccheck g} =
  (-1)^{|I|}  \Scal{f}{\psi\ast (W^I \,\ccheck g)} \\
&=
 (-1)^{|I|} \Scal{f\ast \ccheck W^I\,\ccheck g}{\psi}.
\end{split}
\end{equation}
}

\medskip

\subsection{Kernels, basic properties}\label{7.2}
Following \cite{folland}, we remind now the notion of {\it kernel of type $\mu$}
and some properties stated below in Proposition \ref{kernel}.

\begin{definition}\label{type} A kernel of type $\mu$ is a 
homogeneous distribution of degree $\mu-Q$
(with respect to group dilations),
that is smooth outside of the origin.

The convolution operator with a kernel of type $\mu$
is still called an operator of type $\mu$.
\end{definition}

\begin{proposition}\label{kernel}
Let $K\in\mc D'(\G)$ be a kernel of type $\mu$.
\begin{itemize}
\item[i)] $\ccheck K$ is again a kernel of type $\mu$;
\item[ii)] $WK$ and $KW $ are associated with  kernels of type $\mu-1$ for
any horizontal derivative $W$;
\item[iii)]  If $\mu>0$, then $K\in L^1_{\mathrm{loc}}(\G)$.
\end{itemize}
\end{proposition}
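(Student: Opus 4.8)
The plan is to establish the three items in turn, each time reducing the claim to an elementary property of homogeneous distributions under a natural operation. \emph{Item (i).} In exponential coordinates the inversion $\iota\colon p\mapsto p^{-1}$ is the linear map $p\mapsto -p$; thus $\iota$ is a diffeomorphism of $\G$ that fixes $e$, has unit Jacobian, and commutes with every dilation $\delta_\lambda$, since $\delta_\lambda$ acts diagonally. As $\ccheck K$ is the pull-back $K\circ\iota$, it is smooth on $\G\setminus\{e\}$ (pull-back of a smooth function by a diffeomorphism fixing $e$). The identity $\iota\circ\delta_\lambda=\delta_\lambda\circ\iota$ together with $|\det d\iota|=1$ gives $\ccheck K\circ\delta_\lambda=\lambda^{\mu-Q}\,\ccheck K$, so $\ccheck K$ is again a homogeneous distribution of degree $\mu-Q$ that is smooth off the origin, i.e. a kernel of type $\mu$.

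\emph{Item (ii).} A horizontal derivative $W$ is a left-invariant vector field lying in $V_1$, hence homogeneous of degree $1$ for the dilations: $W(\phi\circ\delta_\lambda)=\lambda\,(W\phi)\circ\delta_\lambda$. Feeding this into the distributional definition of $WK$ yields $WK\circ\delta_\lambda=\lambda^{(\mu-1)-Q}\,WK$, and $WK$ is clearly smooth away from $e$; therefore $WK$ is a kernel of type $\mu-1$, and since $W$ is left-invariant the operator $f\mapsto W(f*K)$ is convolution by the distribution $WK$. For the operator $KW\colon f\mapsto(Wf)*K$, the integration-by-parts formula \eqref{convolution by parts} identifies it with convolution by $-\,\ccheck{(W\,\ccheck K)}$; applying (i), then the part of (ii) just proved, then (i) once more shows this distribution is a kernel of type $\mu-1$.

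\emph{Item (iii).} On $\G\setminus\{e\}$ the distribution $K$ coincides with a smooth function $k$; homogeneity and compactness of the gauge sphere $S=\{|p|=1\}$ give $|k(p)|\le C\,|p|^{\mu-Q}$. Writing Haar measure in the polar form $dp=r^{Q-1}\,d\sigma(\omega)\,dr$ adapted to the dilations, one gets $\int_{|p|\le 1}|p|^{\mu-Q}\,dp=\sigma(S)/\mu<\infty$ because $\mu>0$, so $k\in L^1_{\mathrm{loc}}(\G)$. It remains to see that $K$ equals the distribution determined by $k$. Their difference is supported at $e$, hence is a finite combination $\sum_I c_I\,W^I\delta_e$; but the distribution determined by $k$ is homogeneous of degree $\mu-Q$, while $W^I\delta_e$ is homogeneous of degree $-Q-d(I)$ with $d(I)\ge 0$. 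Matching degrees would force $d(I)=-\mu<0$, which is impossible, so all $c_I$ vanish and $K=k\in L^1_{\mathrm{loc}}(\G)$.

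The one genuinely non-formal point is the last step of (iii): everything else is a change of variables, but to promote ``$K$ is a locally integrable function away from $e$'' to ``$K$ is a locally integrable function on all of $\G$'' one must exclude a distributional part concentrated at the origin, which is done by comparing homogeneity degrees — and this is exactly where the hypothesis $\mu>0$ enters.
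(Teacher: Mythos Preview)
Your proof is correct. The paper itself does not give a proof of this proposition; it is stated as a well-known fact drawn from Folland's theory of homogeneous distributions (see \cite{folland}, \cite{folland_stein}). Your argument reconstructs the standard proof: item (i) follows because inversion in exponential coordinates is $p\mapsto -p$, which commutes with dilations; item (ii) because horizontal vector fields are $\delta_\lambda$-homogeneous of degree $1$, together with the convolution-by-parts identity \eqref{convolution by parts} for the right-hand variant; and item (iii) by combining the polar-coordinate integrability of $|p|^{\mu-Q}$ (Folland--Stein, Proposition 1.15) with the observation that a homogeneous distribution of degree $\mu-Q>-Q$ cannot support any derivatives of $\delta_e$ at the origin. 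Each step is correctly executed.

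One minor remark on item (ii): the interpretation of ``$KW$'' as the convolution operator $f\mapsto (Wf)\ast K$ is the intended one, and your reduction via $(Wf)\ast K = -\,f\ast\ccheck{(W\ccheck K)}$ is exactly how this is handled in the literature. It may be worth noting explicitly that the resulting kernel is $-\,\ccheck{(W\ccheck K)}$, which differs from $WK$ in general (left versus right differentiation of the kernel), though both are of type $\mu-1$.
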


\begin{lemma}\label{closed}
 Let $g$ be a a kernel of type $\mu>0$,
and let $\psi\in \mc D(\G)$ be a test function.
Then $\psi \ast g $ is smooth on $\G$.
 
If, in addition, $R$ is an homogeneous
 polynomial of degree $\ell\ge 0$ in the horizontal derivatives,
 we have
 $$
R( \psi\ast g)(p)= O(|p|^{\mu-Q-\ell})\quad\mbox{as }p\to\infty.
 $$
 On the other hand, if  $g$ is
 a smooth function in $\G\setminus\{0\}$ that
 satisfies the logarithmic estimate
$|g(p)|\le C(1+|\ln|p|| )$ and in addition
its horizontal derivatives are homogeneous of degree $-1$
with respect to group dilations,
 then, if $\psi \in \mc D(\G)$ and $R$ is an homogeneous
 polynomial of degree $\ell\ge 0$ in the horizontal derivatives,
 we have
 \begin{eqnarray*}
R( \psi \ast g)(p)&=&O(|p|^{-\ell})\quad \mbox{as }p\to\infty
\quad\mbox{ if $\ell>0$}; \\
R(\psi\ast g)(p)&=&O(\ln|p| )\quad \mbox{as }p\to\infty
\quad\mbox{ if $\ell=0$}.
\end{eqnarray*}
\end{lemma}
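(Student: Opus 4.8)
The plan is to reduce everything to the decay of the kernel $g$ (and its horizontal derivatives) away from the origin, combined with the compact support of $\psi$. First I would establish smoothness of $\psi\ast g$ on all of $\G$: since $g$ is a distribution that is smooth off the origin and locally integrable (Proposition \ref{kernel} iii), writing $\psi\ast g(p)=\langle g, \ccheck{\psi}(p^{-1}\cdot)\rangle$ shows the convolution is smooth wherever we can differentiate under the pairing, and the singularity of $g$ at $e$ is integrable, so no smoothness is lost there; any left-invariant derivative passes onto $\psi$, i.e. $P(\psi\ast g)=(P\psi)\ast g$, which is again a convolution of a test function with a kernel of type $\mu>0$. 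So the smoothness statement and the reduction of $R(\psi\ast g)$ to $(R\psi)\ast g$ are immediate, the point being purely the asymptotics as $p\to\infty$.

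For the asymptotic estimate in the first (kernel of type $\mu$) case, I would fix $p$ with $|p|$ large, say $|p|\ge 2\,\mathrm{diam}(\supp\psi\cdot\{e\})$ in the gauge, and write
$$
R(\psi\ast g)(p)=(R\psi)\ast g\,(p)=\int_{\supp R\psi}(R\psi)(q)\,g(q^{-1}p)\,dq.
$$
By Proposition \ref{kernel} ii), $R\psi$ being a compactly supported smooth density, the only thing that matters is that on the range of integration $q^{-1}p$ stays comparable to $p$ in homogeneous norm: there is $c>0$ with $|q^{-1}p|\ge c|p|$ for all $q\in\supp(R\psi)$ once $|p|$ is large, by the (quasi-)triangle inequality for the gauge distance. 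Then, since a kernel of type $\mu-\ell$ (here $R$ has order $\ell$, and $R$ applied to $g$ lands us at homogeneity $\mu-Q-\ell$) is homogeneous of that degree and smooth off the origin, $|g(q^{-1}p)|\le C|q^{-1}p|^{\mu-Q-\ell}\le C'|p|^{\mu-Q-\ell}$ uniformly in $q\in\supp(R\psi)$, and integrating against the fixed finite mass of $R\psi$ gives the claimed $O(|p|^{\mu-Q-\ell})$. Strictly, one must be slightly careful because $R\psi\ast g$ is a priori a distributional convolution; but since $g$ is smooth near $q^{-1}p$ for all relevant $q$ (the singularity at $e$ is far from the support), the pairing is an honest integral, so this is legitimate.

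For the second (logarithmic kernel) case the structure is the same but the input hypotheses are different: $g$ itself is only known to grow like $1+|\ln|p||$, while its horizontal derivatives $Wg$ are homogeneous of degree $-1$. When $\ell>0$, I would write $R=W'\circ R'$ with $R'$ of degree $\ell-1$ and $W'$ a single horizontal derivative, so $R(\psi\ast g)=(R'\psi)\ast(W'g)$, and $W'g$ is a bona fide homogeneous function of degree $-1$, hence by iterating the first-case argument $\ell$ times we land on $O(|p|^{-\ell})$ (each further horizontal derivative lowers the homogeneity by $1$; note the derivatives of a degree $-1$ homogeneous function are homogeneous of degree $-1-(\text{order})$, smooth off the origin). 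When $\ell=0$ there is nothing to differentiate: $R(\psi\ast g)(p)=\int\psi(q)g(q^{-1}p)\,dq$, and using $|g(q^{-1}p)|\le C(1+|\ln|q^{-1}p||)\le C(1+|\ln|p||)$ (again for $q$ in the fixed compact support and $|p|$ large, using $|q^{-1}p|\asymp|p|$) together with the finite mass of $\psi$ yields the $O(\ln|p|)$ bound.

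The main obstacle — or rather the only subtlety worth flagging — is the interplay between the distributional nature of the convolution and the pointwise kernel estimates: one has to argue that for $|p|$ large the singularity of $g$ at the identity never enters the region of integration $\supp\psi$, so that $\psi\ast g$ may be computed as a classical integral there and the homogeneity/logarithmic bounds on $g$ can be applied pointwise; after that, everything is a one-line application of the (quasi-)triangle inequality for the gauge and the fixed finite mass of the (derivatives of the) test function. The splitting of the commutator-free identity $R(\psi\ast g)=(R\psi)\ast g$ into single horizontal derivatives is harmless because all the $W_i$ are left-invariant, so they commute past the convolution individually.
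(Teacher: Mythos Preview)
Your overall strategy is the right one, and it is essentially the standard argument: use compact support of $\psi$, the (quasi-)triangle inequality for the gauge to get $|q^{-1}p|\asymp|p|$ for $q\in\supp\psi$ and $|p|$ large, and then plug in the pointwise homogeneity bound for the kernel. The paper states this lemma without proof, so there is nothing to compare against; the expected argument is precisely the one you sketch.

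There is, however, a genuine error in the way you move the differential operator across the convolution. With the paper's convention $f\ast g(p)=\int f(q)\,g(q^{-1}p)\,dq$, a \emph{left}-invariant operator $R$ passes to the \emph{right} factor:
\[
R(\psi\ast g)=\psi\ast (Rg),
\]
as recalled in Section~\ref{7.1}. Your formula $R(\psi\ast g)=(R\psi)\ast g$ is false in general (moving a left-invariant derivative onto $\psi$ would require a right-invariant field, or the reflected identity \eqref{convolution by parts} with $\ccheck{\,\cdot\,}$). This matters: as written, your integral $\int (R\psi)(q)\,g(q^{-1}p)\,dq$ involves the \emph{undifferentiated} kernel $g$, which is homogeneous of degree $\mu-Q$, so the bound you would actually get is $O(|p|^{\mu-Q})$, not $O(|p|^{\mu-Q-\ell})$. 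The parenthetical ``$R$ applied to $g$ lands us at homogeneity $\mu-Q-\ell$'' shows you have the right idea but applied it to the wrong displayed integral.

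The fix is immediate. Write instead
\[
R(\psi\ast g)(p)=\psi\ast(Rg)(p)=\int_{\supp\psi}\psi(q)\,(Rg)(q^{-1}p)\,dq,
\]
note that $Rg$ is a kernel of type $\mu-\ell$ by Proposition~\ref{kernel} ii), hence $|(Rg)(q^{-1}p)|\le C|q^{-1}p|^{\mu-Q-\ell}\le C'|p|^{\mu-Q-\ell}$, and integrate against the fixed mass of $\psi$. In the logarithmic case with $\ell\ge 1$, the same correction gives $R(\psi\ast g)=\psi\ast(Rg)$; since any single horizontal derivative $Wg$ is homogeneous of degree $-1$ and smooth off the origin, further horizontal derivatives lower homogeneity by one, so $Rg$ is homogeneous of degree $-\ell$ and the estimate follows. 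Your treatment of $\ell=0$ in the logarithmic case is fine as stated.
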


In particular, if $\psi\in \mc D(\G)$, and $K$ is a kernel of type
$\mu <Q$, then both $\psi \ast K$
and all its derivatives belong to $L^\infty(\G)$.

In the following theorem we gather some continuity properties for convolutions that can be find in \cite{folland} and \cite{folland_stein}
(or easily derived from \cite{folland}  \cite{folland_stein}).

\begin{theorem}  \label{folland cont} We have:
\begin{itemize}
         \item[i)] Hausdorff-Young inequality holds, i.e.,  if $f\in L^p(\G)$, $g\in L^q(\G)$, $1\le p, q,r \le \infty$ and $\frac1p + \frac1q - 1 = \frac1r$, then $f\ast g\in L^r(\G)$
         (see \cite{folland_stein}, Proposition 1.18) .
	\item[ii)] If $K$ is a kernel of type $0$, $1<p<\infty$, $\ge 0$, then the mapping $T:u\to u\ast K$ defined for $u\in\mc D(\G)$ extends to a bounded operator on $W^{s,p}(\G)$
	(see \cite{folland}, Theorem 4.9).
	\item[iii)] Suppose $0<\mu <Q$, $1<p<Q/\mu$ and $\frac{1}{q}=\frac{1}{p}-\frac{\mu}{Q}$. Let $K$ be a kernel of type $\mu$. If $u\in L^p(\G)$ the convolutions $u\ast K$ and $K\ast u$ exists a.e. and are in $L^q(\G)$ and there is a constant $C_p>0$ such that 
	$$
	\|u\ast K\|_q\le C_p\|u\|_p\quad \mathrm{and}\quad \| K\ast u\|_q\le C_p\|u\|_p\,
	$$
	(see \cite{folland}, Proposition 1.11).
	\item[iv)] Suppose  $s\ge 1$, $1<p<Q$, and let $\mc U$ be a bounded open set. If $K$ is a kernel of type $1$ and $u\in W^{s-1,p}(\G)$  with $\supp u \subset \mc U$, then 
	$$
	\|u\ast K\|_{W^{s,p}(\G)} \le C_{\mc U} \|u\|_{W^{s-1,p}}(\G).
	$$
\end{itemize} 
\end{theorem}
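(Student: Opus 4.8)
The four items are classical facts about convolution on homogeneous (Carnot) groups, and the plan is to reduce each to the corresponding statement in \cite{folland} or \cite{folland_stein}, filling in the minor adaptations. For (i), since $\G$ is nilpotent it is unimodular and $\mathcal L^n$ is bi-invariant Haar measure; hence the classical proof of Young's convolution inequality applies verbatim: one fixes $f\in L^p$, shows by Minkowski's integral inequality that $g\mapsto f\ast g$ maps $L^1\to L^p$ and by H\"older's inequality that it maps $L^{p'}\to L^\infty$, and interpolates to obtain $L^q\to L^r$ with $\frac1p+\frac1q-1=\frac1r$. This is exactly \cite{folland_stein}, Proposition 1.18.

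For (iii), a kernel $K$ of type $\mu$ satisfies the pointwise bound $|K(p)|\le C|p|^{\mu-Q}$ away from the origin, so $|u\ast K|$ and $|K\ast u|$ are dominated pointwise by the Riesz-type potential $|u|\ast|p|^{\mu-Q}$. Splitting this kernel near and far from the origin, the near part lies in $L^1_{\mathrm{loc}}$ (Proposition \ref{kernel}, (iii)) and the far part lies in $L^{p'}$ precisely because $p<Q/\mu$, which gives the a.e.\ existence of the convolutions; the Hardy--Littlewood--Sobolev inequality on spaces of homogeneous type then yields the $L^p\to L^q$ bound with $\frac1q=\frac1p-\frac\mu Q$. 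This is \cite{folland}, Proposition 1.11. For (ii), a kernel of type $0$ is (as a homogeneous distribution of degree $-Q$, smooth off the origin) a Calder\'on--Zygmund convolution kernel, hence the operator $u\mapsto u\ast K$ is bounded on $L^p(\G)$ for $1<p<\infty$. Writing the Sobolev norm, for integer $s$, as $\sum_{d(I)\le s}\|W^Iu\|_p$ and using that left-invariant derivatives pass through right convolution, $W^I(u\ast K)=(W^Iu)\ast K$, each summand is controlled by this $L^p$ boundedness; for non-integer $s$ one passes to the Bessel-potential description $W^{s,p}=(I+\mathcal L)^{-s/2}L^p$, with $\mathcal L$ a sub-Laplacian, and interpolates, exactly as in \cite{folland}, Theorem 4.9.

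For (iv), combine the previous items with Proposition \ref{kernel}, (ii). If $K$ has type $1$ and $|J|\ge 1$, write $W^J=W\,W^{J'}$ and $W^J(u\ast K)=(W^{J'}u)\ast(WK)$, where $WK$ has type $0$; since $W^{J'}u\in L^p$ for $|J'|\le s-1$, item (ii) gives $\|W^J(u\ast K)\|_p\le C\,\|u\|_{W^{s-1,p}}$. It remains to control the zeroth-order term $\|u\ast K\|_p$. Here item (iii) gives $u\ast K\in L^q$ with $\frac1q=\frac1p-\frac1Q$; splitting $K=\eta K+(1-\eta)K$ with $\eta\in\mc D(\G)$ equal to $1$ near the origin, the part $u\ast(\eta K)$ is controlled in $L^p$ over any fixed bounded neighbourhood $\mc U'$ of $\mc U$ by H\"older's inequality (finite measure), while $u\ast((1-\eta)K)$ is the convolution of the compactly supported $u$ with a smooth function obeying the decay estimates of Lemma \ref{closed}, hence on $\G\setminus\mc U'$ it decays like $|p|^{1-Q}$ with derivatives decaying faster, yielding a global $W^{s,p}$ bound with constant depending on $\mc U$.

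\textbf{Main obstacle.} Items (i)--(iii) are essentially citations. The one genuinely delicate point is in (iv): reconciling the $L^q$ gain of item (iii) with the global $L^p$-integrability built into the norm of $W^{s,p}(\G)$ — a gain that fails for the bare kernel but is rescued by the compact support of $u$ together with the kernel decay of Lemma \ref{closed}. Everything else is bookkeeping around the cited propositions.
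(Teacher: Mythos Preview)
Your outline for (i)--(iii) is correct and matches the paper exactly: those items are pure citations and the paper offers no further argument. For (iv) your overall strategy --- peel off one horizontal derivative so that, by Proposition~\ref{kernel}~(ii), the kernel becomes type~$0$, invoke item~(ii), and treat the bare $L^p$ term separately --- is precisely the paper's. The paper writes $W_\ell(u\ast K)=u\ast(W_\ell K)$, observes that $W_\ell K$ is type~$0$, and applies (ii) directly in $W^{s-1,p}$; this is slightly cleaner than your version, which tries to split the derivatives between $u$ and $K$.

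One technical slip worth fixing: on a noncommutative Carnot group, left-invariant derivatives fall on the \emph{second} factor of the convolution, $W(f\ast g)=f\ast Wg$ (see the remark after \eqref{group convolution} and formula \eqref{convolution by parts}), not on the first. So your identities $W^I(u\ast K)=(W^Iu)\ast K$ in (ii) and $W^J(u\ast K)=(W^{J'}u)\ast(WK)$ in (iv) are not literally correct. It does not damage the argument --- putting the derivative on $K$ and then using (ii) on $W^{s-1,p}$ yields the same bound --- but the formula should be stated correctly.

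For the zeroth-order term in (iv) the two arguments diverge. The paper applies (iii) with a \emph{lower} source exponent, so that the target exponent is $p$, and then uses H\"older on the bounded support $\mc U$ to dominate that lower Lebesgue norm by $\|u\|_{L^p}\le\|u\|_{W^{s-1,p}}$ (this is what the clause ``keeping into account that $L^{pQ/(Q-p)}(\mc U)\subset L^p$'' is driving at, modulo an apparent slip in the exponent). Your route --- split $K=\eta K+(1-\eta)K$, handle the near part by Young (since $\eta K\in L^1$) and the far part by the decay of Lemma~\ref{closed} applied to the compactly supported $u$ --- is a legitimate alternative and makes more explicit where the compact-support hypothesis is used.
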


\begin{proof} The proof of iv) can be carried out relying on Theorems 4.10, 4.9 and Proposition 1.11 of \cite{folland}, keeping into account that
$L^{pQ/(Q-p)}(\mc U) \subset L^{p}$ and Proposition \ref{kernel}, ii). Indeed
\begin{equation*}\begin{split}
\|u\ast K\|_{W^{s,p}(\G)} &\le C\big\{ \|u\ast K\|_{L^p (\G)} +\sum_{\ell =1}^m \|u\ast W_\ell K\|_{W^{s-1,p}(\G)} \big\}
\\&
 \le C\big\{ \|u\ast K\|_{L^p (\G)} + \|u\|_{W^{s-1,p}(\G)} \big\}
\\&
\le 
C\big\{ \|u\|_{L^{pQ/(Q-p}(\G)} + \|u\|_{W^{s-1,p}(\G)} \big\}
\le
C_{\mc U} \|u\|_{W^{s-1,p}}(\G).
\end{split}\end{equation*}

\end{proof}

\begin{definition} Let $f$ be a measurable function on $\G$. If $t>0$ we set
$$
\lambda_f(t) = |\{|f|>t\}|.
$$
If $1\le p\le\infty$ and
$$
 \sup_{t>0} \lambda_f^p(t)  <\infty,
$$
we say that $f\in L^{p,\infty}(\G)$.
\end{definition}

\begin{definition}\label{M}
Following \cite{BBD}, Definition A.1, if $1<p<\infty$, we set 
$$
\| u\|_{M^p} : = \inf \{C\ge 0 \, ; \, \int_K |u| \, dx \le C |K|^{1/p'}\;
\mbox{for all $L$-measurable set $K\subset \G$}\}.
$$
and $M^p = M^p(\G)$ is the set of measurable functions $u$ on $\G$ satisfying $\| u\|_{M^p}<\infty$.
\end{definition}

Repeating verbatim the arguments of \cite{BBD}, Lemma A.2, we obtain

\begin{lemma}\label{marc alternative} If $1<p<\infty$, then
$$
\dfrac{(p-1)^p}{p^{p+1}}  \| u\|_{M^p }^p  \le \sup_{\lambda >0} \{\lambda^p | \{|u|>\lambda\} |\, \} \le  \| u\|_{M^p } ^p.
$$
In particular, if $1<p<\infty$, then  $M^p  = L^{p,\infty}(\G)$.
\end{lemma}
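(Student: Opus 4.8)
The plan is to establish the two displayed inequalities separately and then read off $M^p=L^{p,\infty}(\G)$ from the observation that, by the definition of $L^{p,\infty}$, the quantity $\sup_{\lambda>0}\lambda^p|\{|u|>\lambda\}|$ is precisely the $p$-th power of the weak-$L^p$ quasinorm of $u$, so that equivalence of the two expressions forces the two spaces to coincide.

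For the right-hand (easy) inequality $\sup_{\lambda>0}\lambda^p|\{|u|>\lambda\}|\le\|u\|_{M^p}^p$, I would fix $\lambda>0$ and feed the superlevel set $K_\lambda:=\{|u|>\lambda\}$ into the defining inequality for $\|u\|_{M^p}$; when $|K_\lambda|=\infty$ one instead uses measurable subsets of $K_\lambda$ of arbitrary finite measure, which exist because Lebesgue measure is non-atomic and $\sigma$-finite, and one concludes along the way that then $\|u\|_{M^p}=\infty$. Since $\lambda|K_\lambda|\le\int_{K_\lambda}|u|\,dx\le\|u\|_{M^p}\,|K_\lambda|^{1/p'}$ and $1-\tfrac1{p'}=\tfrac1p$, this gives $\lambda|K_\lambda|^{1/p}\le\|u\|_{M^p}$, hence $\lambda^p|K_\lambda|\le\|u\|_{M^p}^p$; taking the supremum over $\lambda$ finishes this half.

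For the left-hand inequality, which carries the constant, set $A:=\sup_{\lambda>0}\lambda^p|\{|u|>\lambda\}|$ and assume $A<\infty$. Given a measurable set $K$ with $|K|=t<\infty$, I would use the layer-cake identity $\int_K|u|\,dx=\int_0^\infty|K\cap\{|u|>\lambda\}|\,d\lambda\le\int_0^\infty\min\{t,A\lambda^{-p}\}\,d\lambda$ and split the one-variable integral at the crossover value $\lambda_0:=(A/t)^{1/p}$, where $t=A\lambda_0^{-p}$. The two pieces are $\int_0^{\lambda_0}t\,d\lambda=t\lambda_0=A^{1/p}t^{1/p'}$ and $\int_{\lambda_0}^\infty A\lambda^{-p}\,d\lambda=\tfrac{A}{p-1}\lambda_0^{1-p}=\tfrac1{p-1}A^{1/p}t^{1/p'}$, so $\int_K|u|\,dx\le\tfrac{p}{p-1}A^{1/p}|K|^{1/p'}$. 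Taking the infimum in the definition gives $\|u\|_{M^p}\le\tfrac{p}{p-1}A^{1/p}$, i.e. $\tfrac{(p-1)^p}{p^p}\|u\|_{M^p}^p\le A$, which is even slightly stronger than the stated bound $\tfrac{(p-1)^p}{p^{p+1}}\|u\|_{M^p}^p\le A$.

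I do not expect a real obstacle here: both halves reduce to elementary estimates on the distribution function of $u$. The only points demanding a little care are handling superlevel sets of infinite measure in the first step and keeping track of the exponents $1/p$ and $1/p'$ in the integral split of the second; the exact numerical constant is purely a bookkeeping matter (and, as noted, is not sharp as stated). Finally, combining the two inequalities and recalling the definition of the $L^{p,\infty}$-quasinorm yields the ``in particular'' statement $M^p=L^{p,\infty}(\G)$ with equivalent quasinorms.
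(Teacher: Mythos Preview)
Your argument is correct and is exactly the standard proof of this equivalence. The paper itself does not give a self-contained proof of this lemma: it simply says the result is obtained by ``repeating verbatim the arguments of \cite{BBD}, Lemma A.2,'' so there is nothing to compare beyond noting that your layer-cake computation (which even yields the sharper constant $\tfrac{(p-1)^p}{p^p}$ in place of $\tfrac{(p-1)^p}{p^{p+1}}$) is precisely the expected argument.
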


\begin{corollary}\label{marc alternative coroll}  If $1\le s <p$, then $M^p \subset L^s_{\mathrm{loc}} (\G)\subset L^1_{\mathrm{loc}} (\G)$.

\end{corollary}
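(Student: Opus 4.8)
The plan is to reduce the claim $M^p \subset L^s_{\mathrm{loc}}(\G)$ to the inclusion $L^{p,\infty}(\G) \subset L^s_{\mathrm{loc}}(\G)$, which is available once we invoke Lemma \ref{marc alternative}: the latter gives $M^p = L^{p,\infty}(\G)$ as sets (with comparable quasinorms), so it suffices to show that an $L^{p,\infty}$ function is locally $L^s$ for every $s<p$. The second inclusion $L^s_{\mathrm{loc}}(\G) \subset L^1_{\mathrm{loc}}(\G)$ is then immediate from H\"older's inequality on sets of finite Haar measure, since $s \ge 1$.

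For the core inclusion, I would fix a measurable set $K \subset \G$ with $|K| < \infty$ (e.g.\ a metric ball $B(e,R)$) and estimate $\int_K |u|^s\,dx$ using the layer-cake/distribution-function formula
\begin{equation*}
\int_K |u|^s\,dx = s \int_0^\infty t^{s-1}\, |\{x \in K : |u(x)| > t\}|\, dt.
\end{equation*}
I would split the integral at a threshold $t=1$ (or any fixed constant). On $0 < t \le 1$ I bound $|\{x\in K : |u|>t\}| \le |K|$, so this part contributes at most $|K|\int_0^1 s t^{s-1}\,dt = |K| < \infty$. On $t>1$ I use the defining bound $|\{|u|>t\}| \le \|u\|_{L^{p,\infty}}^p\, t^{-p}$, giving a contribution bounded by $s\|u\|_{L^{p,\infty}}^p \int_1^\infty t^{s-1-p}\,dt$, and this integral converges precisely because $s-1-p < -1$, i.e.\ $s<p$. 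Combining, $\|u\|_{L^s(K)}^s \le C(|K| + \|u\|_{L^{p,\infty}}^p) < \infty$, which is exactly local $L^s$-integrability.

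The argument is essentially the classical one and there is no serious obstacle; the only point requiring a little care is that $L^{p,\infty}$ here is defined via a quasinorm rather than a norm, so one should state the estimate in terms of $\sup_{t>0} t^p \lambda_u(t)$ directly (as in the Definition preceding Lemma \ref{marc alternative}) rather than manipulating $\|\cdot\|_{L^{p,\infty}}$ as if it were subadditive. One should also note that Haar measure on $\G$ is $\sigma$-finite and that metric balls have finite measure, so "locally $L^s$" can be tested on balls; this is where the assumption $Q \ge 3$ (or simply the structure of Carnot groups) is used only implicitly, through the standard properties of Lebesgue/Haar measure already set up in Section \ref{kernels}. No properties of kernels or of the group law beyond $\sigma$-finiteness of the measure are needed, so the proof is self-contained given Lemma \ref{marc alternative}.
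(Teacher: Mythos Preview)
Your argument is correct. The layer-cake estimate with a split at a fixed threshold is the standard route and works exactly as you wrote it; the remark about using the quasinorm via $\sup_{t>0}t^p\lambda_u(t)$ rather than pretending subadditivity is well placed.

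The paper's proof takes a slightly different, shorter path. Instead of computing $\int_K |u|^s$ by layer-cake, it observes that if $u\in M^p=L^{p,\infty}$ then $|u|^s\in L^{p/s,\infty}=M^{p/s}$ (this is the elementary scaling $\lambda_{|u|^s}(t)=\lambda_u(t^{1/s})$, combined with Lemma~\ref{marc alternative}, which is legitimate since $p/s>1$). Then the very Definition~\ref{M} of $M^{p/s}$ gives directly
\[
\int_K |u|^s \le \||u|^s\|_{M^{p/s}}\,|K|^{1-s/p},
\]
which is local $L^s$-integrability with the explicit dependence on $|K|$ for free. So the paper recycles Definition~\ref{M} as the ``layer-cake already done for you'', trading your explicit computation for a one-line reduction. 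Your version is more self-contained and does not rely on re-invoking the $M^{p/s}$ definition; the paper's buys brevity and the sharp exponent $1-s/p$ without any splitting.
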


\begin{proof} By Lemma \ref{marc alternative}, if $u\in M^p$ then $|u|^s\in M^{p/s}$, and we can conclude
thanks to Definition \ref{M}.

\end{proof}

\begin{lemma}\label{convolutions} Let $E$ be a kernel of type $\alpha\in (0,Q)$. Then for all $f\in L^1(\G)$ we have $f\ast E\in M^{Q/(Q-\alpha)} $
and there exists $C>0$ such that 
$$
 \| f\ast E\|_{M^{Q/(Q-\alpha)}} \le C\|f \|_{L^1({\G})}
   $$
for all $f\in L^1(\G)$. In particular, by Corollary \ref{marc alternative coroll}, 
if $1\le p < Q/(Q-\alpha)$, then $f\ast E\in
L^{p}_{\mathrm{loc}}(\G) \subset  L^1_{\mathrm{loc}}(\G)$.
\end{lemma}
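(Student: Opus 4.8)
The plan is to verify the defining inequality of $M^{Q/(Q-\alpha)}$ directly, reducing everything to the elementary fact that the model function $|x|^{\alpha-Q}$ belongs to $L^{Q/(Q-\alpha),\infty}(\G)$. Throughout write $p=Q/(Q-\alpha)$, so that $1/p'=\alpha/Q$.

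First I would record the pointwise size of $E$. Since $\alpha\in(0,Q)$, Proposition \ref{kernel} iii) says that $E$ is a locally integrable function; being homogeneous of degree $\alpha-Q$ and smooth outside the origin, it satisfies $E(x)=|x|^{\alpha-Q}E(\delta_{|x|^{-1}}x)$ for $x\neq 0$, and since the gauge sphere $\{|x|=1\}$ is compact there is $C_0$ with $|E(x)|\le C_0|x|^{\alpha-Q}$ on all of $\G\setminus\{0\}$. Next, using that the Haar measure of a gauge ball of radius $r$ is $c_\G r^Q$, a direct computation of the distribution function gives $|\{x:\ |x|^{\alpha-Q}>t\}|=c_\G\,t^{-Q/(Q-\alpha)}$, so $|x|^{\alpha-Q}\in L^{p,\infty}(\G)$. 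By Lemma \ref{marc alternative}, $L^{p,\infty}(\G)=M^p(\G)$ with equivalent norms, whence $\||\cdot|^{\alpha-Q}\|_{M^p}=:C_1<\infty$; by Definition \ref{M} this means
$$
\int_A |x|^{\alpha-Q}\,dx\le C_1\,|A|^{1/p'}=C_1\,|A|^{\alpha/Q}\qquad\text{for every measurable }A\subset\G.
$$

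Then, fix a measurable set $K\subset\G$; we may assume $|K|<\infty$, since otherwise $|K|^{1/p'}=\infty$ and there is nothing to prove. By Tonelli's theorem and the left-invariance of Haar measure, substituting $z=y^{-1}x$,
$$
\int_K|f\ast E(x)|\,dx\le\int_\G|f(y)|\Big(\int_K|E(y^{-1}x)|\,dx\Big)dy=\int_\G|f(y)|\Big(\int_{y^{-1}K}|E(z)|\,dz\Big)dy.
$$
For each $y$ we have $|y^{-1}K|=|K|$, so by the pointwise bound on $E$ and the previous inequality the inner integral is at most $C_0\int_{y^{-1}K}|z|^{\alpha-Q}\,dz\le C_0C_1|K|^{\alpha/Q}$, uniformly in $y$. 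Hence $\int_K|f\ast E|\le C_0C_1\,|K|^{1/p'}\|f\|_{L^1}$. Since this holds for every $K$ of finite measure and $\G$ is $\sigma$-finite, the iterated integral above is finite for a.e.\ $x$, so $f\ast E$ is a.e.\ well defined and measurable, and the displayed estimate says exactly $f\ast E\in M^{Q/(Q-\alpha)}(\G)$ with $\|f\ast E\|_{M^{Q/(Q-\alpha)}}\le C\|f\|_{L^1}$. The final "in particular" is then immediate from Corollary \ref{marc alternative coroll}.

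The argument is essentially routine. The only point that deserves a word of care is that we do \emph{not} have $E\in L^1(\G)$ globally (indeed $|x|^{\alpha-Q}$ is not integrable at infinity), so Young's inequality is not available; the substitute is precisely the $M^p$-membership of $|x|^{\alpha-Q}$, which controls integrals over arbitrary sets of finite measure, and the a.e.\ finiteness of $f\ast E$ is obtained a posteriori from this bound rather than assumed.
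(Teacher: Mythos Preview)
The paper states this lemma without proof, treating it as standard (the weak-type $(1,Q/(Q-\alpha))$ bound for fractional integration on Carnot groups goes back to Folland and Folland--Stein, the references cited throughout Section~\ref{kernels}). Your argument is correct and is in fact the most natural proof given the paper's setup: rather than estimating the distribution function of $f\ast E$ directly, you exploit the $M^p$ formulation of weak $L^p$ (Definition~\ref{M} and Lemma~\ref{marc alternative}) to reduce everything to a single uniform bound on $\int_{y^{-1}K}|z|^{\alpha-Q}\,dz$. The pointwise estimate $|E(x)|\le C_0|x|^{\alpha-Q}$ from homogeneity and smoothness on the unit sphere, the computation of $|\{|x|^{\alpha-Q}>t\}|$ via the scaling of Haar measure, and the Tonelli/translation-invariance step are all correct; your closing remark on a.e.\ absolute convergence of the convolution integral is the right way to justify that $f\ast E$ is well defined.
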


As in \cite{BFP2}, Lemma 4.4 and Remark 4.5, we have:

\begin{remark}\label{truncation} Suppose $0< \alpha<Q$.
If $K$ is a kernel of type $\alpha$
and $\psi \in \mc D(\G)$, $\psi\equiv 1$ in a neighborhood of the origin, then
the statements of Lemma \ref{convolutions} still
hold if we replace $K$ by $(1-\psi )K$ or by $\psi K$. 

\end{remark}

\subsection{Estimates on shells}

Here, we prove a fine boundedness property of kernels in $L^1$, expressed in terms of $L^1$ norms on shells. It will play a crucial role in section \ref{GNI}. We start with a preliminary duality lemma.
  
\begin{lemma}\label{closed ex bis}
If  $K$ is a kernel of type $\mu\in (0,Q)$,
$u\in L^1(\G)$ and
$\psi\in \mc D(\G)$, then
\begin{equation}
\Scal{u\ast K}{ \psi} = \Scal{ u}{\psi\ast\ccheck K}.
\end{equation}
\end{lemma}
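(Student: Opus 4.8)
The plan is to reduce the identity $\Scal{u\ast K}{\psi} = \Scal{u}{\psi\ast\ccheck K}$ to the distributional convolution identities already recorded in \eqref{convolutions var}, the only subtlety being that $u\in L^1(\G)$ is not compactly supported while $K$ is merely a distribution (singular at the origin, slowly decaying at infinity), so that the pairing $\Scal{u}{\psi\ast\ccheck K}$ must first be shown to make sense as an absolutely convergent integral. First I would note that $u\ast K$ is a well-defined distribution: since $\psi\in\mc D(\G)$ has compact support and $\psi\ast\ccheck K$ is smooth (Lemma \ref{closed}, applied to $\ccheck K$, which by Proposition \ref{kernel}(i) is again a kernel of type $\mu>0$), the right-hand side $\Scal{u}{\psi\ast\ccheck K}=\int_\G u(q)\,(\psi\ast\ccheck K)(q)\,dq$ is well-defined provided $\psi\ast\ccheck K\in L^\infty(\G)$; and indeed Lemma \ref{closed} (together with the remark following it, since $\mu<Q$) guarantees $\psi\ast\ccheck K$ is bounded and decays at infinity, so the integral converges absolutely against $u\in L^1$. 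This also shows the right-hand side defines a continuous linear functional of $\psi$, which we may take as the definition of the distribution $u\ast K$; the content of the lemma is then that this definition agrees with the one coming from \eqref{convolutions var}.

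The main step is an approximation/truncation argument. I would write $u=\lim_j u_j$ where $u_j=\chi_j u$ and $\chi_j\in\mc D(\G)$ is a sequence of cut-offs, $\chi_j\equiv 1$ on $B(0,j)$, so that $u_j\to u$ in $L^1(\G)$ and each $u_j$ has compact support. For the compactly supported $u_j$, the identity $\Scal{u_j\ast K}{\psi}=\Scal{u_j}{\psi\ast\ccheck K}$ is exactly \eqref{convolutions var} (with $f=u_j$, $g=K$), since one of the two factors now has compact support. It then remains to pass to the limit on both sides. On the right-hand side, $\Scal{u_j}{\psi\ast\ccheck K}\to\Scal{u}{\psi\ast\ccheck K}$ because $u_j\to u$ in $L^1$ and $\psi\ast\ccheck K\in L^\infty$. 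On the left-hand side, I need $\Scal{u_j\ast K}{\psi}\to\Scal{u\ast K}{\psi}$; but by the same \eqref{convolutions var} applied to the test function $\psi$, $\Scal{u_j\ast K}{\psi}=\Scal{u_j}{\psi\ast\ccheck K}$, so in fact the two sides of the approximate identity coincide term-by-term and the convergence is automatic — the limit identity holds by construction.

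The point where I expect to have to be careful — the "main obstacle," though it is more a matter of bookkeeping than of real depth — is justifying that $\psi\ast\ccheck K$ is genuinely in $L^\infty(\G)$ and, more importantly, that it decays fast enough at infinity for the pairing against an arbitrary $L^1$ function (with no decay or support hypothesis) to be legitimate and stable under the truncation. This is precisely what Lemma \ref{closed} supplies: with $R=\mathrm{id}$ ($\ell=0$) and $\mu-Q<0$, one gets $(\psi\ast\ccheck K)(p)=O(|p|^{\mu-Q})$ as $p\to\infty$, hence $\psi\ast\ccheck K\in L^\infty$ and the dominated convergence theorem applies to $\int (\chi_j u)(\psi\ast\ccheck K)\to\int u(\psi\ast\ccheck K)$. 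Once this is in place, the proof is essentially the three lines above. I would also remark, for later use in the section, that the argument shows $u\ast K$ is the distribution represented by the locally integrable function $q\mapsto\int u(p)\,\ccheck K(p^{-1}q)\,dp$ when that integral converges, consistently with Lemma \ref{convolutions}.
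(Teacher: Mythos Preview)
Your approach is correct in spirit but takes a different route from the paper. The paper proves the identity directly by Fubini--Tonelli: it shows that the double integral
\[
\int\int |K(y^{-1}x)|\,|u(y)|\,|\psi(x)|\,dy\,dx
\]
is finite (splitting the $y$-integration into the regions $|y|\le 2M$ and $|y|>2M$, where $\supp\psi\subset B(e,M)$), and then simply swaps the order of integration. Your approach instead approximates $u$ by compactly supported $u_j$, invokes \eqref{convolutions var} for each $u_j$, and passes to the limit using $\psi\ast\ccheck K\in L^\infty$ from Lemma~\ref{closed}. This is a legitimate alternative and has the advantage of recycling machinery already in place; the paper's Fubini argument is more self-contained and perhaps slightly more elementary, as it does not require Lemma~\ref{closed} explicitly.

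One point in your argument deserves more care. You say the convergence $\Scal{u_j\ast K}{\psi}\to\Scal{u\ast K}{\psi}$ is ``automatic'' because the two sides of the approximate identity coincide. But this is only tautological if you are \emph{defining} $u\ast K$ as the limit of $u_j\ast K$; if instead $u\ast K$ means the locally integrable function $x\mapsto\int u(y)K(y^{-1}x)\,dy$ (which is the intended meaning, cf.\ Lemma~\ref{convolutions}), then you must separately argue that $u_j\ast K\to u\ast K$ in $L^1_{\mathrm{loc}}$, which follows from the continuity statement in Lemma~\ref{convolutions}. Without this step your argument would be circular. This is easy to add, but you should make it explicit.
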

In this equation, the left hand side is the action of a matrix-valued distribution on a vector-valued test function, see formula \eqref{matrix form}, the right hand side is the inner product of an $L^1$ vector-valued function with an $L^\infty$ vector-valued function.

\begin{proof}

The assertion follows by Fubini-Tonelli theorem.
 Indeed
\begin{equation}\label{jan 8 eq:1}\begin{split}
\int\int  & |K(y^{-1}x)|\, | u (y) | | \psi (x) |\, dy\, dx 
\\&
\\& \le  
C\,  \int\int  d(x,y)^{\mu-Q} \, | u (y) | | \psi(x) |\, dy\, dx 
< \infty.
\end{split}\end{equation}

 Since $\psi$ is compactly supported there exists $M>0$ such that the above integral
can be written as
$$
\int_{|x|\le M}\int \cdots = \int_{|x|\le M}\int_{|y|\le 2M} \cdots 
+
 \int_{|x|\le M}\int_{|y|> 2M} \cdots.
$$ 
Now
\begin{equation}\begin{split}
 \int_{|x|\le M} & \int_{|y|\le2M} d(x,y)^{\mu-Q} \, | u (y) | | \psi  (x) |\, dy\, dx
 \\&
 \le C_{\psi }  \int_{|x|\le M} \int_{|y|\le 2M} d(x,y)^{\mu-Q} \, | u (y) |\, dy, dx
  \\&
 \le C_{\psi }  \int_{|x|\le M}\Big( \int_{d(x,y)\le 3M} d(x,y)^{\mu-Q} \,  dx\Big) | u (y) |\, dy
 \\&
 \le C_{\psi } \|u\|_{L^1{(\G)}}.
\end{split}\end{equation}
On the other hand, if $|x|\le M$ and $|y|> 2M$, then $d(x,y) >M$. so that
\begin{equation}\begin{split}
 \int_{|x|\le M}\int_{|y|> 2M} & d(x,y)^{\mu-Q} \, | u (y) | | \psi  (x) |\, dy\, dx
 \\&
 \le M^{\mu-Q} \|\psi \|_{L^1{(\G)}}  \|u\|_{L^1{(\G)}}.
\end{split}\end{equation}
Then
\begin{equation}\begin{split}
\int \Big(\int &  K(y^{-1}x) u (y)\, dy\Big)\,  \psi  (x) \, dx
\\&
= \int \Big(\int K(y^{-1}x) \psi  (x) \,dx \Big)  u (y)\, dy
\\&
= \int \Big(\int \ccheck K(x^{-1}y) \psi  (x) \, dx \Big)  u (y)\, dy,
\end{split}\end{equation}
and therefore 
we are done.

\end{proof}

{  \begin{remark}\label{closed ex bis remark} The conclusion of Lemma \ref{closed ex bis} still holds
if we assume $K\in L^1_{\mathrm{loc}}(\G)$, provided $u$ is compactly
supported.
\end{remark}
}

\begin{theorem}\label{anuli}
If $K$ is a kernel of type $\alpha\in (0,Q)$, then for any $f\in L^1(\G)$ such that
\begin{equation}
	\label{average}
\int_{\G} f(y)\, dy = 0,
\end{equation}
we have:
$$
R^{-\alpha} \int_{B(e,2R)\setminus B(e,R)} |K\ast f| \, dx \longrightarrow 0\qquad\mbox{as $R\to\infty$.}
$$

\end{theorem}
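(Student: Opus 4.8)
The plan is to exploit the cancellation condition \eqref{average} by rewriting $K\ast f$ as a convolution against a \emph{difference} of translates of the kernel, which decays faster at infinity than $K$ itself. First I would fix a large radius $R$ and a point $x$ in the shell $B(e,2R)\setminus B(e,R)$, so that $|x|\sim R$. Using $\int f = 0$ we may write
$$
(K\ast f)(x) = \int_{\G} \big(K(y^{-1}x) - K(x)\big)\, f(y)\, dy.
$$
Split the domain of integration into the \emph{near} region $\{|y| \le R/2\}$ (say) and the \emph{far} region $\{|y| > R/2\}$. On the far region one simply estimates $|K(y^{-1}x)-K(x)| \le |K(y^{-1}x)| + |K(x)|$, uses the homogeneity bound $|K(z)| \le C|z|^{\alpha-Q}$ together with $|x|\sim R$, and integrates over the shell $|x|\sim R$ (volume $\sim R^Q$); the contribution of $|K(x)|$ gives $R^{\alpha-Q}\cdot R^Q \cdot \int_{|y|>R/2}|f| = R^\alpha\, o(1)$ since the tail of $\|f\|_{L^1}$ vanishes, and the $|K(y^{-1}x)|$ term is handled by Fubini together with the local integrability of $|z|^{\alpha-Q}$ near $0$ (Proposition \ref{kernel} iii)), again producing $R^\alpha\,o(1)$.

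The near region is where the gradient estimate on $K$ enters. For $|y|\le R/2$ and $|x|\sim R$, the segment of the (sub-Riemannian) path from $x$ to $y^{-1}x$ stays in a region where $|z|\gtrsim R$, so the mean value inequality along horizontal curves, combined with the fact that horizontal derivatives of $K$ are kernels of type $\alpha-1$ (Proposition \ref{kernel} ii)) and hence satisfy $|\nabla_H K(z)| \le C|z|^{\alpha-1-Q}$, yields
$$
|K(y^{-1}x) - K(x)| \le C\,\frac{|y|}{|x|^{Q-\alpha+1}} \le C\,\frac{|y|}{R^{Q-\alpha+1}}.
$$
Integrating over $|x|\sim R$ (volume $\sim R^Q$) and then over $y$ gives a bound
$$
R^{-\alpha}\int_{B(e,2R)\setminus B(e,R)} \Big|\int_{|y|\le R/2}\big(K(y^{-1}x)-K(x)\big)f(y)\,dy\Big|\,dx
\;\le\; \frac{C}{R}\int_{|y|\le R/2}|y|\,|f(y)|\,dy .
$$
To see that the right-hand side tends to $0$, split the $y$-integral at a fixed radius $\rho$: the part $|y|\le\rho$ contributes at most $(C\rho/R)\|f\|_{L^1}\to 0$, and the part $\rho<|y|\le R/2$ is at most $\tfrac{C}{R}\cdot\tfrac{R}{2}\int_{|y|>\rho}|f| = \tfrac{C}{2}\int_{|y|>\rho}|f|$, which can be made arbitrarily small by choosing $\rho$ large; a standard $\eps/2$ argument (choose $\rho$ first, then $R$ large) finishes this term.

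The main obstacle is the justification of the mean-value estimate in the sub-Riemannian setting: unlike in $\R^n$, one cannot join $x$ to $y^{-1}x$ by a straight segment, and one must instead use a horizontal (or at least sub-unit) path whose length is comparable to $d(x,y^{-1}x)\le C|y|$ and which remains in the region $\{|z|\gtrsim R\}$ where the type-$(\alpha-1)$ estimate on $\nabla_H K$ is valid. This requires a mild quantitative control: for $|y|\le R/2$ one has $|y^{-1}x|\ge |x| - C|y| \gtrsim R$ by the quasi-triangle inequality for the homogeneous norm, and the connecting path can be taken inside a ball that stays away from the origin, so the homogeneity bound applies uniformly along it. A secondary technical point is that $K$ is matrix-valued and only a distribution a priori; but all manipulations above take place pointwise for $x$ away from $0$ where $K$ is smooth, and the passage from the distributional convolution to the pointwise integral is exactly the content of Lemma \ref{closed ex bis} (and Remark \ref{closed ex bis remark}), so no additional regularization is needed. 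Once these points are in place, summing the near and far contributions gives the claimed $R^{-\alpha}\int_{\text{shell}}|K\ast f| = o(1)$.
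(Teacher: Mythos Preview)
Your proposal is correct and follows essentially the same route as the paper. The paper also subtracts $K(x)$ via the zero-average condition, splits the $y$-integration (into three annuli $|y|<R/2$, $R/2<|y|<4R$, $|y|>4R$ rather than your two regions, but this is cosmetic), and for the near part invokes the same gradient bound $|K(y^{-1}x)-K(x)|\le C|y|\,|x|^{\alpha-Q-1}$; your ``main obstacle'' is exactly the content of \cite{folland_stein}, Proposition~1.7, which the paper cites directly, and your $\eps/2$ argument for the near term is replaced there by dominated convergence applied to $|f(y)|\,|y|\,\chi_{[0,R/2]}(|y|)R^{-1}$.
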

\begin{proof}
If $R>1$, taking into account \eqref{average}, we have:
\begin{equation*}\begin{split}
R^{-\alpha}  & \int_{B(e,2R)\setminus B(e,R)} |K\ast f| \, dx
=
R^{-\alpha}\int_{R<|x|<2R} \,dx \Big| \int  \,  K(y^{-1}x) f(y) \, dy\Big|
\\&
= R^{-\alpha}\int_{R<|x|<2R} \,dx \Big| \int  \, \big[ K(y^{-1}x)-K(x)\big] f(y) \, dy\Big|
\\&
\le
R^{-\alpha}  \int  \,  | f(y)| \Big(  \int_{R<|x|<2R}  \Big| K(y^{-1}x)-K(x) \Big|  \,dx \Big)  \, dy
\\&
=R^{-\alpha} \int_{|y|<\frac12 R} \,  | f(y)| \big( \cdots \big)  \, dy
+
R^{-\alpha} \int_{ \frac12 R<|y| <4R} \,  | f(y)| \big( \cdots \big)  \, dy
\\&
+
R^{-\alpha} \int_{|y| > 4 R} \,  | f(y)| \big( \cdots \big)  \, dy
\\&
=: R^{-\alpha} I_{1}(R) + R^{-\alpha} I_{2}(R) + R^{-\alpha} I_{3}(R).
\end{split}\end{equation*}
Consider first the third term above. By homogeneity we have
$$
 I_{3}(R) \le C_K\,  \int_{|y|>4 R} \,  | f(y)| \big( \int_{R<|x|<2R} (d(x,y)^{-Q+\alpha} + d(x,e)^{-Q+\alpha}  ) \,dx  \big)  \, dy
$$
Notice now that, if $|y| >4R$ and $ R<|x|<2R$, then $d(x,y)\ge |y| - |x|
\ge 4 R -R = \ge \frac{3}2 |x|$. Therefore, by \cite{folland_stein}, Corollary 1.16,
\begin{equation*}\begin{split}
d(x,y)^{-Q+\alpha} & + d(x,e)^{-Q+\alpha} \le \left\{\big( \dfrac{2}{3}\big)^{Q-\alpha} + 1\right\} |x|^{-Q+\alpha},
\end{split}\end{equation*}
and then
$$
 \int_{R<|x|<2R} (d(x,y)^{-Q+\alpha} + d(x,e)^{-Q+\alpha}  ) \,dx \le C_\alpha \, R^\alpha.
$$
Thus
$$
R^{-\alpha} I_{3}(R) \le C_{K,\alpha}\,  \int_{|y|>4R} \,  | f(y)|   \, dy \longrightarrow 0
$$
as $R\to\infty$.

Consider now the second term. Again we have
$$
 I_{2}(R) \le C_K\,  \int_{\frac12 R<|y|<4 R} \,  | f(y)| \big( \int_{R<|x|<2R} (d(x,y)^{-Q+\alpha} + d(x,e)^{-Q+\alpha}  ) \,dx  \big)  \, dy.
$$
Obviously, as above,
$$
\int_{R<|x|<2R} d(x,e)^{-Q+\alpha} \,dx \le C R^\alpha.
$$
Notice now that, if $\dfrac12 R < |y| <4R$ and $ R<|x|<2R$, then $d(x,y)\le |x| + |y|
\le 6R$. Hence
$$
 \int_{\frac12 R<|y|<4 R} \,  | f(y)| \big( \int_{d(x,y)<6R} d(x,y)^{-Q+\alpha} \,dx  \big)  \, dy \le CR^\alpha.
$$
Therefore 
$$
R^{-\alpha}I_{2}(R) \le C_K\,  \int_{\frac12 R<|y|<4 R} \,  | f(y)|\, dy \longrightarrow 0
$$
as $R\to\infty$.
Finally, if $|y| < \frac{R}2$ and $ R<|x|<2R$ we have $|y| < \frac12 |x|$, so that, by \cite{folland_stein},
Proposition 1.7 and Corollary 1.16,
\begin{equation*}\begin{split}
R^{-\alpha} I_{1}(R) & \le C_K\,  \int_{|y|<\frac12 R} \,  | f(y)| \big( \int_{R<|x|<2R} \frac{|y|}{|x|^{Q-\alpha+1}} \,dx  \big)  \, dy
 \\&
 = C_K\,  \int_{\G} \,  | f(y)||y| \chi_{[0,\frac12 R]}(|y|) \big(R^{-\alpha} \int_{R<|x|<2R} \frac{1}{|x|^{Q-\alpha+1}} \,dx  \big)  \, dy
  \\&
\le C_K\,  \int_{\G} \,  | f(y)||y| \chi_{[0,\frac12 R]}(|y|)R^{-1} \, dy=: C_K\,  \int_{\G} \,  | f(y)|H_R(|y|) \, dy.
\end{split}\end{equation*}
Obviously, for any fixed $y\in \G $ we have $(|y|) H_R (|y|)\to 0$ as $R\to\infty$. On the other hand, 
$ | f(y)|H_R(|y|) \le \frac12 |f(y)|$, so that, by dominated convergence theorem,
$$
R^{-\alpha} I_{1}(R) \longrightarrow 0
$$
as $R\to\infty$.

This completes the proof of Theorem \ref{anuli}.
\end{proof}

\subsection{Powers of Kohn's Laplacian and Sobolev spaces}\label{7.3}

In section \ref{SH}, we shall construct operators of order $-1$, and we shall need to show that they improve differentiability. They win one degree of differentiability on the $L^p$ scale when $p>1$, but not on the $L^1$ scale. This is why we need introduce fractional Sobolev spaces, fortunately only for exponents $p>1$. We choose to define them using powers of Kohn's Laplacian.

Let $\{X_1,\dots,X_m\}$ be the fixed basis of the horizontal layer $V_1$
of $\mathfrak g$ chosen above. 

 We denote by $\Delta_\G$ the negative
horizontal sublaplacian 
$$
\Delta_\G := \sum_{j=1}^mX_j^2.
$$
If $1<p<\infty$ and $a\in\mathbb C$, we define $(-\Delta_\G)^{a/2}$ in $ L^p(\G)$ following \cite{folland}.
If in addition $s\ge 0$, again as in \cite{folland}, we denote by $W^{s,p}_\G(\G)$
the domain of the realization of $(-\Delta_\G)^{s/2}$ in $L^p(\G)$ endowed with the
graph norm. In fact, as soon as $p\in (1,\infty)$ is fixed, to avoid
cumbersome notations, we do not stress the explicit dependence on $p$ of the
fractional powers $(-\Delta_\G)^{s/2}$ and of its domain.

\begin{remark}\label{interpolation remark} By \cite{saka}, Proposition 6, if $p>1$, then
the spaces $W^{s,p}_\G(\G)$, $s\ge 0$ provide a complex interpolation scale
of Banach spaces (see e.g. \cite{berg_lofstrom}).

\end{remark}

\begin{proposition}\label{invariance} The operators $(-\Delta_\G)^{s/2}$ are
left invariant on $W_\G^{s,p}(\G)$.
\end{proposition}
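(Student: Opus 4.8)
The statement to prove is Proposition \ref{invariance}: that the fractional powers $(-\Delta_\G)^{s/2}$ are left invariant on $W_\G^{s,p}(\G)$. The natural strategy is to reduce everything to the left invariance of $\Delta_\G$ itself, which is immediate since $\Delta_\G = \sum_{j=1}^m X_j^2$ is a polynomial in the left invariant vector fields $X_j$, and then propagate this property through the functional calculus that defines the fractional powers. Denote by $L_p$ the left translation operator $L_p u(x) = u(p^{-1}x)$, which is an isometry on every $L^q(\G)$, $1\le q<\infty$. The goal is the identity $(-\Delta_\G)^{s/2}L_p = L_p(-\Delta_\G)^{s/2}$ on $W_\G^{s,p}(\G)$, together with the fact that $L_p$ maps $W_\G^{s,p}(\G)$ to itself isometrically.

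\textbf{Key steps.} First I would recall, following \cite{folland}, that for $1<p<\infty$ the operator $-\Delta_\G$ is a nonnegative self-adjoint operator on $L^2$ whose heat semigroup $e^{t\Delta_\G}$ extends to a strongly continuous semigroup on $L^p(\G)$, and that the fractional power $(-\Delta_\G)^{a/2}$ is defined via the standard Balakrishnan / subordination formula from this semigroup (for $\mathrm{Re}\,a<0$ as a convolution operator with an integrable kernel, and for general $a$ by analytic continuation / composition with integer powers of $\Delta_\G$). Second, since $\Delta_\G$ is built from left invariant vector fields, its heat kernel $h_t$ satisfies $e^{t\Delta_\G}u = u\ast h_t$, and left translation commutes with right convolution: $L_p(u\ast h_t) = (L_p u)\ast h_t$. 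Hence $e^{t\Delta_\G}L_p = L_p e^{t\Delta_\G}$ on $L^p$. Third, plugging this commutation relation into the integral representation of $(-\Delta_\G)^{a/2}$ (for $\mathrm{Re}\,a$ in the range where the representing kernel is integrable), and using that $L_p$ is a bounded operator so it passes through the Bochner integral, gives $(-\Delta_\G)^{a/2}L_p = L_p(-\Delta_\G)^{a/2}$ on that strip; one then extends to all $a$ (in particular to $a=s\ge 0$) by analytic continuation in $a$ and by composing with integer powers $\Delta_\G^k$, each of which is manifestly left invariant. Fourth, to conclude on $W_\G^{s,p}(\G)$: by definition this space is the domain of the realization of $(-\Delta_\G)^{s/2}$ on $L^p(\G)$ with the graph norm $\|u\|_{L^p}+\|(-\Delta_\G)^{s/2}u\|_{L^p}$; the commutation relation just established shows that if $u$ is in the domain then so is $L_p u$, with $(-\Delta_\G)^{s/2}L_p u = L_p (-\Delta_\G)^{s/2}u$, and since $L_p$ is an $L^p$-isometry the graph norm is preserved. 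Thus $L_p$ restricts to an isometry of $W_\G^{s,p}(\G)$ commuting with $(-\Delta_\G)^{s/2}$, which is exactly the assertion.

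\textbf{Main obstacle.} The only delicate point is the passage between the regime $\mathrm{Re}\,a<0$, where $(-\Delta_\G)^{a/2}$ is literally a convolution operator (so left invariance is transparent and $L_p$ visibly passes through the defining integral), and general real $s\ge 0$, where the fractional power is defined only by analytic continuation together with composition with powers of $\Delta_\G$. One must check that this analytic continuation respects the commutation with $L_p$ — which it does, because $a\mapsto (-\Delta_\G)^{a/2}u$ is, for $u$ in a suitable dense set, analytic in $a$ with values in $L^p$, and $L_p$ is a fixed bounded operator, so the two analytic functions $(-\Delta_\G)^{a/2}L_p u$ and $L_p(-\Delta_\G)^{a/2}u$ agreeing on a strip agree everywhere they are both defined. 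Equivalently one can avoid analytic continuation entirely by writing $(-\Delta_\G)^{s/2} = \Delta_\G^{k}\,(-\Delta_\G)^{(s/2)-k}$ for $k$ a large enough integer so that the second factor has negative-real-part exponent, and invoking the transparent cases for each factor separately; I expect this second route to be the cleanest to write, and it is essentially the argument one finds in \cite{folland}. Everything else — that $L_p$ is an $L^p$-isometry, that it commutes with right convolutions, that it passes through Bochner integrals — is routine.
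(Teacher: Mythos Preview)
The paper states Proposition \ref{invariance} without proof; it is treated as a known fact inherited from the construction of fractional powers in \cite{folland}. Your argument is a correct and standard way to fill in the details: left invariance of $\Delta_\G$ passes to the heat semigroup (since $e^{t\Delta_\G}$ is right convolution with the heat kernel), hence to the fractional powers via their integral representation, and then to the domain $W_\G^{s,p}(\G)$ with its graph norm. The factorization $(-\Delta_\G)^{s/2}=(-\Delta_\G)^{k}(-\Delta_\G)^{(s-2k)/2}$ with $k$ large enough that the second exponent is negative is indeed the cleanest route, and matches how \cite{folland} handles such extensions. There is nothing to compare against in the paper itself.
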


We recall that

\begin{proposition}[\cite{folland}, Corollary 4.13]\label{integer spaces} If $1<p<\infty$
and $\ell\in\mathbb N$, then
the space $W_\G^{\ell,p}(\G)$
coincides with the space of all $u\in L^p(\G)$ such that
$$
X^I u\in L^p(\G)\quad\text{for all multi-indices }I\text{ with } d(I)=\ell,
$$
endowed with the natural norm.

\end{proposition}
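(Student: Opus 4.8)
The plan is to reduce the statement to the machinery of Sections \ref{7.1}--\ref{7.3}, since this is precisely Corollary 4.13 of \cite{folland}; I will only describe the two inclusions and indicate how each follows from results already available. Write $\widetilde W^{\ell,p}$ for the space of $u\in L^p(\G)$ such that $X^I u\in L^p(\G)$ for every multi-index $I$ with $d(I)=\ell$, normed by $\|u\|_p+\sum_{d(I)=\ell}\|X^I u\|_p$; the goal is to prove $W^{\ell,p}_\G(\G)=\widetilde W^{\ell,p}$ with equivalent norms, which by the closed graph theorem amounts to the two inclusions being bounded.

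For $W^{\ell,p}_\G(\G)\hookrightarrow\widetilde W^{\ell,p}$, I would show that for each $I$ with $d(I)=\ell$ the operator $X^I(-\Delta_\G)^{-\ell/2}$ is bounded on $L^p(\G)$, $1<p<\infty$. When $\ell<Q$ this is transparent: $(-\Delta_\G)^{-\ell/2}$ is convolution with a kernel of type $\ell$, so by Proposition \ref{kernel} ii) — applied $d(I)$ times, after writing the vertical derivative $T$ as a commutator of horizontal fields so that it lowers the type by $2$ — the operator $X^I(-\Delta_\G)^{-\ell/2}$ is associated with a kernel of type $0$, and Theorem \ref{folland cont} ii) with $s=0$ gives $L^p$-boundedness. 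For general $\ell$ one argues the same way after factoring $(-\Delta_\G)^{-\ell/2}$ through fractional powers of small order (this is where one genuinely invokes \cite{folland}, Theorems 4.9--4.10). Applying the resulting bound to $(-\Delta_\G)^{\ell/2}u$ yields $\|X^I u\|_p\le C\,\|(-\Delta_\G)^{\ell/2}u\|_p$, first for $u\in\mc D(\G)$ and then, by density and the definition of the graph-norm domain, for all $u\in W^{\ell,p}_\G(\G)$.

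For the reverse inclusion $\widetilde W^{\ell,p}\hookrightarrow W^{\ell,p}_\G(\G)$ I would split on the parity of $\ell$. If $\ell=2k$, then $(-\Delta_\G)^{k}=(-1)^k\big(\sum_j X_j^2\big)^k$ is a left-invariant differential operator which, normal-ordered via the Poincar\'e--Birkhoff--Witt basis, is a finite linear combination of operators $X^I$ with $d(I)=\ell$ — the commutator corrections produced by reordering preserve the homogeneous degree $d$ — so $\|(-\Delta_\G)^{\ell/2}u\|_p\le C\sum_{d(I)=\ell}\|X^I u\|_p$ directly. If $\ell$ is odd, I would use the identity $(-\Delta_\G)^{1/2}=\sum_j R_j^{*}X_j$, valid because $\sum_j X_j^{*}X_j=-\Delta_\G$, where $R_j=X_j(-\Delta_\G)^{-1/2}$ and its adjoint $R_j^{*}=-(-\Delta_\G)^{-1/2}X_j$ are type $0$ operators, hence bounded on $L^p$; then the composition identity $(-\Delta_\G)^{(\ell-1)/2}R_j^{*}=-(-\Delta_\G)^{(\ell-2)/2}X_j$ lets one peel off horizontal derivatives in pairs, and after $(\ell-1)/2$ steps $(-\Delta_\G)^{\ell/2}u$ is reduced to a combination of terms $R_m^{*}\big(X_m X^I u\big)$ with $d(I)=\ell-1$, so again $\|(-\Delta_\G)^{\ell/2}u\|_p\le C\sum_{d(J)=\ell}\|X^J u\|_p$. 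Combining the two inclusions gives equality of the spaces with equivalent norms; the control of the lower-order derivatives $X^J u$ with $d(J)<\ell$ that is implicit in the statement then follows from this equivalence together with the interpolation scale of Remark \ref{interpolation remark} between $W^{\ell,p}_\G(\G)$ and $L^p(\G)=W^{0,p}_\G(\G)$.

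The step I expect to be the main obstacle is the odd-order case of the reverse inclusion: making rigorous the formal composition identities among the fractional powers $(-\Delta_\G)^{s/2}$, the Riesz transforms $R_j$, and the vector fields $X_j$ (they should be verified first on $\mc D(\G)$, then extended by density using the continuity statements of Theorem \ref{folland cont} and the functional calculus for $-\Delta_\G$), together with the bookkeeping that PBW reordering never raises the homogeneous degree $d$. A secondary technical point, also deferred to \cite{folland}, is establishing that $X^I(-\Delta_\G)^{-\ell/2}$ is genuinely a type $0$ (and thus $L^p$-bounded) operator for all $\ell\in\mathbb N$, including $\ell\ge Q$, where $(-\Delta_\G)^{-\ell/2}$ is no longer exactly homogeneous.
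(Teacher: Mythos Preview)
The paper does not supply a proof of this proposition at all: it is stated with an explicit citation to \cite{folland}, Corollary~4.13, and is used as a black box. So there is no ``paper's own proof'' to compare against; your outline is essentially a sketch of Folland's argument, and the two directions you describe (type~$0$ kernel bound for $X^I(-\Delta_\G)^{-\ell/2}$ in one direction, PBW plus Riesz transforms in the other) are indeed the standard route.

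One small point: your odd-$\ell$ ``peeling in pairs'' is slightly tangled. The identity $(-\Delta_\G)^{(\ell-1)/2}R_j^{*}=-(-\Delta_\G)^{(\ell-2)/2}X_j$ applied as you wrote it just reproduces $(-\Delta_\G)^{\ell/2}$ tautologically. The cleaner way is to use a single factorization $(-\Delta_\G)^{\ell/2}=\sum_j R_j^{*}X_j(-\Delta_\G)^{(\ell-1)/2}$, observe that $(\ell-1)/2$ is an integer so $(-\Delta_\G)^{(\ell-1)/2}$ is already a differential operator handled by your even-case PBW argument, and then absorb the extra $X_j$ into the multi-index. That gives $\|(-\Delta_\G)^{\ell/2}u\|_p\le C\sum_{d(J)=\ell}\|X^J u\|_p$ in one step rather than $(\ell-1)/2$ steps.
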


\begin{proposition}[\cite{folland}, Corollary 4.14] If $1<p<\infty$
and $s\ge 0$, then the space $W_\G^{s,p}(\G)$
is independent of the choice of $X_1,\dots,X_m$.
\end{proposition}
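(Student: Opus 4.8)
The statement to prove is: for $1<p<\infty$ and $s\ge 0$, the space $W_\G^{s,p}(\G)$ is independent of the choice of horizontal basis $X_1,\dots,X_m$ of $V_1$. The plan is to reduce everything to the known integer case (Proposition \ref{integer spaces}) together with the interpolation structure recalled in Remark \ref{interpolation remark}. First I would fix two bases $\{X_1,\dots,X_m\}$ and $\{Y_1,\dots,Y_m\}$ of $V_1$, write $\Delta_\G$ and $\widetilde\Delta_\G$ for the corresponding sublaplacians, and let $W_\G^{s,p}$, $\widetilde W_\G^{s,p}$ be the domains of $(-\Delta_\G)^{s/2}$, $(-\widetilde\Delta_\G)^{s/2}$ in $L^p$ with their graph norms. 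By symmetry it suffices to prove a one-sided continuous inclusion $W_\G^{s,p}\hookrightarrow \widetilde W_\G^{s,p}$, since swapping the roles of the two bases gives the reverse inclusion, and the two graph norms are then equivalent by the open mapping theorem.

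The main step is to establish the inclusion at integer exponents $s=\ell\in\N$ and then interpolate. For $s=\ell$ this is immediate from Proposition \ref{integer spaces}: that proposition characterizes $W_\G^{\ell,p}$ as $\{u\in L^p:\ X^Iu\in L^p\text{ for all }d(I)=\ell\}$ with the natural norm, and this description is manifestly symmetric — each $Y_j$ is a fixed linear combination of the $X_i$, so any monomial $Y^J$ with $d(J)=\ell$ expands, via the Poincar\'e--Birkhoff--Witt theorem, into a finite linear combination of monomials $X^I$ with $d(I)=\ell$ (the homogeneous degree is preserved because $V_1$ consists of degree-one vectors and the change of basis is linear). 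Hence $W_\G^{\ell,p}=\widetilde W_\G^{\ell,p}$ with equivalent norms for every $\ell\in\N$, in particular the identity map is bounded both ways. Then, for non-integer $s$, pick integers $\ell_0\le s\le \ell_1$ and $\theta\in[0,1]$ with $s=(1-\theta)\ell_0+\theta\ell_1$; by Remark \ref{interpolation remark} (Saka's theorem), $W_\G^{s,p}=[W_\G^{\ell_0,p},W_\G^{\ell_1,p}]_\theta$ and likewise for the tilde spaces, and since the identity is bounded $W_\G^{\ell_i,p}\to\widetilde W_\G^{\ell_i,p}$ for $i=0,1$, the interpolation property of the complex method gives that it is bounded $W_\G^{s,p}\to\widetilde W_\G^{s,p}$, with norm controlled by the endpoint norms. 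Applying the same argument with the bases exchanged yields equality of the two spaces with equivalent norms.

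I would also need to check one easily-overlooked point: that these spaces do not depend on which of the two sublaplacians is used merely to \emph{define} the fractional power, i.e. that the abstract interpolation space $[W^{\ell_0,p},W^{\ell_1,p}]_\theta$ really coincides with the domain of $(-\Delta_\G)^{s/2}$; but this is exactly the content of Remark \ref{interpolation remark} and Saka's Proposition~6, so it may be invoked directly. The only real obstacle is bookkeeping with homogeneous degrees under the change of basis — one must be sure that $d(J)=\ell$ forces every term $X^I$ appearing in the PBW expansion of $Y^J$ to have $d(I)=\ell$ rather than $d(I)<\ell$; this is true because replacing a product of horizontal fields by the product of their linear combinations, then reordering via commutators, only trades a factor $X_iX_j$ for terms $X_kX_\ell$ (same degree $2$) plus a bracket $[X_i,X_j]\in V_2$ (also degree $2$), so the total homogeneous degree is an invariant of the reordering. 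Beyond that, the argument is a routine reduction-and-interpolation, and no delicate estimate is required. Finally, since both Proposition \ref{integer spaces} and Remark \ref{interpolation remark} are already in place, the proof can be stated compactly: integer case by the explicit characterization, general case by complex interpolation, symmetry to pass from one-sided inclusion to equality.
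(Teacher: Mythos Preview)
The paper does not prove this proposition at all: it is stated with a citation to \cite{folland}, Corollary~4.14, and no argument is given. So there is nothing in the paper to compare your proof against.

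Your argument is nonetheless correct and self-contained within the framework the paper sets up. The reduction to integer exponents via Proposition~\ref{integer spaces} is sound: since any two bases of $V_1$ are related by an invertible linear change, each $Y^J$ with $d(J)=\ell$ rewrites as a finite linear combination of $X^I$ with $d(I)=\ell$ (your remark that commutators preserve homogeneous degree is exactly the point needed here), and this gives $W_\G^{\ell,p}=\widetilde W_\G^{\ell,p}$ with equivalent norms. The passage to non-integer $s$ by complex interpolation is also valid, since Remark~\ref{interpolation remark} applies separately to each sublaplacian's scale of Sobolev spaces, and the identity operator interpolates. One minor comment: Proposition~\ref{integer spaces} as stated says $d(I)=\ell$, but for the norm equivalence you actually want $d(I)\le \ell$ (the ``natural norm'' includes lower-order terms); this is harmless since the change-of-basis argument handles all degrees up to $\ell$ simultaneously.

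In short: the paper outsources the proof to Folland, whereas you supply one; your route is the natural one given the tools already assembled in Section~\ref{kernels}.
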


\begin{proposition}\label{density0} If $1<p<\infty$
and $s\ge 0$, then $ \mc S(\G)$
and $\mc D(\G)$ are dense subspaces of $W_\G^{s,p}(\G)$.
\end{proposition}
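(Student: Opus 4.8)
The plan is to reduce to integer exponents by complex interpolation, and then to prove density in integer-order Sobolev spaces by the classical truncation-plus-mollification scheme. Since $\mc D(\G)\subset\mc S(\G)\subset W^{s,p}_\G(\G)$, it is enough to show that $\mc D(\G)$ is dense in $W^{s,p}_\G(\G)$.

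For $s=0$ one has $W^{0,p}_\G(\G)=L^p(\G)$ and the statement is classical, so assume $s>0$ and choose an integer $m>s$; put $\theta=s/m\in(0,1)$. By Remark \ref{interpolation remark}, $\{W^{\sigma,p}_\G(\G)\}_{\sigma\ge 0}$ is a complex interpolation scale, and since $W^{0,p}_\G(\G)=L^p(\G)$ this gives
$$
W^{s,p}_\G(\G)=[L^p(\G),W^{m,p}_\G(\G)]_\theta
$$
with equivalent norms. As $L^p(\G)\cap W^{m,p}_\G(\G)=W^{m,p}_\G(\G)$ with equivalent norms, the standard fact that a subspace dense in the intersection of a compatible Banach couple is dense in every complex interpolation space of that couple (see e.g. \cite{berg_lofstrom}) shows that it suffices to prove density of $\mc D(\G)$ in $W^{m,p}_\G(\G)$ for every $m\in\N$.

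So let $u\in W^{m,p}_\G(\G)$; by Proposition \ref{integer spaces}, $X^Iu\in L^p(\G)$ for every left-invariant monomial $X^I$ with $d(I)\le m$, and the norm of $W^{m,p}_\G(\G)$ is equivalent to $\|u\|_p+\sum_{d(I)\le m}\|X^Iu\|_p$. \emph{Truncation.} Fix $\chi\in\mc D(\G)$ with $\chi\equiv 1$ near $e$ and set $\chi_R:=\chi\circ\delta_{1/R}$. Because the horizontal vector fields are homogeneous of degree $1$ with respect to dilations, every derivative $X^J\chi_R$ of positive order satisfies $\|X^J\chi_R\|_\infty\le C_JR^{-d(J)}\le C_JR^{-1}$, while $\|\chi_R\|_\infty=\|\chi\|_\infty$. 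Expanding $X^I(\chi_Ru)$ by the Leibniz rule for left-invariant differential operators, the leading term $\chi_RX^Iu$ converges to $X^Iu$ in $L^p(\G)$ by dominated convergence, and each remaining term $(X^J\chi_R)(X^Ku)$ with $|J|\ge 1$ has $L^p$-norm $\le C_JR^{-1}\|X^Ku\|_p\to 0$. Hence $\chi_Ru\to u$ in $W^{m,p}_\G(\G)$, so compactly supported elements are dense. \emph{Mollification.} Now let $u\in W^{m,p}_\G(\G)$ be compactly supported, fix $\psi\in\mc D(\G)$ with $\int_\G\psi=1$, and let $\psi_\eps$ be the associated approximate identity obtained by dilation. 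The group convolution $\psi_\eps\ast u$ of \eqref{group convolution} belongs to $\mc D(\G)$: it is smooth, since one may differentiate under the integral sign, and it is supported in the bounded set $\supp\psi_\eps\cdot\supp u$. Since the $X_j$ are left-invariant, $X^I(\psi_\eps\ast u)=\psi_\eps\ast(X^Iu)$ for every $I$, and as $X^Iu\in L^p(\G)$ when $d(I)\le m$ we get $\psi_\eps\ast(X^Iu)\to X^Iu$ in $L^p(\G)$ as $\eps\to 0$. Thus $\psi_\eps\ast u\to u$ in $W^{m,p}_\G(\G)$, which together with the truncation step gives density of $\mc D(\G)$ in $W^{m,p}_\G(\G)$, and hence, by the interpolation reduction, in $W^{s,p}_\G(\G)$ for all $s\ge 0$.

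The only step that requires some care is the reduction to non-integer $s$: one must combine the complex interpolation identity of Remark \ref{interpolation remark} correctly with the density statement for interpolation couples. The truncation and mollification arguments are entirely routine, the key point in the latter being that one convolves on the side for which the left-invariant operators $X^I$ act on the factor $u$.
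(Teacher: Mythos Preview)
The paper does not provide a proof of this proposition; it is stated without argument, alongside other results cited from \cite{folland}, as part of the standard machinery of Sobolev spaces on stratified groups. Your proof is correct and is essentially the classical route (interpolation to reduce to integer order, then truncation plus left mollification), so there is no divergence to discuss.

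Two minor comments on presentation. First, in the truncation step your phrase ``by the Leibniz rule for left-invariant differential operators'' hides a small bookkeeping point: expanding $X^I(\chi_R u)$ iteratively produces terms $(D_\alpha\chi_R)(D_\beta u)$ where $D_\alpha,D_\beta$ are monomials in the $W_j$ that need not be in PBW order; one then rewrites $D_\beta$ as a combination of $X^K$ with $d(K)\le d(I)\le m$ via the commutation relations (which only lower homogeneous degree) before invoking $X^K u\in L^p$. This is routine but worth a sentence. Second, your closing paragraph is commentary rather than proof and could be dropped; the key point it raises (that left-invariant operators pass through a convolution on the left, i.e.\ $X^I(\psi_\eps\ast u)=\psi_\eps\ast X^I u$) is already made in the body of the argument.
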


\begin{theorem}[\cite{folland}, Corollary 4.15]\label{berthollet 2}
If $\phi\in \mc D(\G)$, the map $f\to \phi f$ is continuous from $W^{s,p}(\G)$
to  $W^{s,p}(\G)$ for $p> 1$ and $s\ge 0$.
\end{theorem}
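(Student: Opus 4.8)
\textbf{Proof plan for Theorem \ref{berthollet 2}.} The statement is that multiplication by a fixed test function $\phi\in\mc D(\G)$ is a bounded operator on $W_\G^{s,p}(\G)$ for $p>1$ and $s\ge 0$. This is cited from \cite{folland}, Corollary 4.15, so the plan is to reconstruct Folland's argument in the Carnot group setting, using the tools already assembled in this section.

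First I would dispose of the integer case. When $s=\ell\in\N$, Proposition \ref{integer spaces} identifies $W_\G^{\ell,p}$ with the set of $u\in L^p$ such that $X^I u\in L^p$ for all multi-indices $I$ with $d(I)=\ell$, normed accordingly. For such $u$, the left-invariant Leibniz rule expands $X^I(\phi u)$ as a finite sum $\sum_{J,J'} c_{J,J'}\,(X^J\phi)(X^{J'}u)$ over multi-indices with $d(J)+d(J')\le d(I)=\ell$ (here the $c_{J,J'}$ are constants coming from the non-commutativity, but the sum is still finite and the homogeneity bookkeeping is controlled by $d(\cdot)$). Since $\phi\in\mc D(\G)$, each $X^J\phi$ is bounded, so $\|X^I(\phi u)\|_p\le C\sum_{d(J')\le\ell}\|X^{J'}u\|_p\le C\|u\|_{W^{\ell,p}}$, giving boundedness for integer $s$.

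For general $s\ge 0$, I would interpolate. By Remark \ref{interpolation remark} (quoting \cite{saka}, Proposition 6), the spaces $W_\G^{s,p}(\G)$, $s\ge 0$, form a complex interpolation scale. Fix $s$ and choose integers $\ell_0\le s\le \ell_1$ with $s=(1-\theta)\ell_0+\theta\ell_1$ for some $\theta\in[0,1]$. The multiplication operator $M_\phi: u\mapsto \phi u$ is bounded $W_\G^{\ell_0,p}\to W_\G^{\ell_0,p}$ and $W_\G^{\ell_1,p}\to W_\G^{\ell_1,p}$ by the integer case, with norms depending only on finitely many sup-norms of horizontal derivatives of $\phi$; by the interpolation theorem for the complex method it is bounded $W_\G^{s,p}\to W_\G^{s,p}$. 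Continuity (linearity plus boundedness) follows. By Proposition \ref{density0}, $\mc D(\G)$ is dense in $W_\G^{s,p}$, so it suffices to have the estimate on test functions, where all manipulations above are legitimate, and then extend by density.

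The main obstacle is purely bookkeeping rather than conceptual: one must check that the non-commutative Leibniz expansion of $X^I(\phi u)$ in a Carnot group genuinely produces only terms $(X^J\phi)(X^{J'}u)$ with $d(J')\le d(I)$, since commutators $[X_i,X_j]$ reintroduce higher-layer vector fields whose homogeneity degree $d(\cdot)$ is larger than their order $|\cdot|$. The point is that the relevant filtration is by homogeneity degree $d$, not by order, and $d$ is exactly additive under the product rule and respected by commutators (a commutator of fields of degrees $d_1,d_2$ has degree $d_1+d_2$), so the filtration argument closes. Everything else is a direct transcription of the Euclidean proof, with $\Delta_\G$ and $d(I)$ replacing the ordinary Laplacian and the length of a multi-index.
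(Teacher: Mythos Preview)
The paper does not supply its own proof of this statement; it is quoted verbatim as Corollary 4.15 of \cite{folland}. Your reconstruction---Leibniz rule for integer $s$ via Proposition \ref{integer spaces}, then complex interpolation via Remark \ref{interpolation remark}---is correct and is in fact the route Folland takes. One small clarification on your ``main obstacle'': when you expand $W_{j_1}\cdots W_{j_\ell}(\phi u)$ by iterating the product rule, no commutators arise at all; you obtain a sum of terms $(W_{j_{a_1}}\cdots W_{j_{a_r}}\phi)(W_{j_{b_1}}\cdots W_{j_{b_s}}u)$ with $r+s=\ell$ and the original ordering preserved within each factor. The commutator issue only enters if you then want to rewrite the factor acting on $u$ in PBW-ordered form $W^{J'}$, and there your observation that the stratification makes $d$ additive under brackets (so $d(J')\le \ell$) is exactly right. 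Alternatively, one can bypass this entirely by using the equivalent norm given by iterated \emph{horizontal} derivatives only (also in \cite{folland}, Theorem 4.10), where the Leibniz expansion is purely combinatorial and no reordering is needed.
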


The following Proposition is a tool to prove that a given operator maps a suitable function space into a Sobolev space $W^{s,p}$. Indeed, it reduces the question to the case of the kernel of a negative power of $\Delta_\G$. It will be used in Lemma \ref{berthollet 1}.

  \begin{proposition}[see \cite{folland}]\label{pdalpha}
  Suppose  $0<\beta<Q$.   Denote by $h=h(t,x)$ the fundamental
solution of $-\Delta_\G +\partial/\partial t$ (see \cite{folland}, Proposition 3.3). Then the integral
$$
R_\beta(x) =\frac{1}{\Gamma(\beta/2)}
\int_0^{\infty}t^{\frac{\beta}{2}-1}h(t,x)\, dt
$$
converges absolutely for $x\neq 0$.

Moreover
\begin{itemize}
\item [i)]  $R_\beta$ is a kernel of type $\beta$;
\item[ii)] if $\alpha\in (0,2)$ and $u\in\mc D(\G)$, then
$$
(-\Delta_\G)^{\alpha/2} u = -\Delta_\G (u\ast R_{2-\alpha}).
$$
\end{itemize}
 \end{proposition}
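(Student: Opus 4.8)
The plan is to establish Proposition \ref{pdalpha} by assembling known facts about the heat kernel $h(t,x)$ of $-\Delta_\G + \partial_t$ on the Carnot group $\G$, which are collected in \cite{folland}, Proposition 3.3. First I would record the scaling and decay of $h$: by homogeneity, $h(t,x) = t^{-Q/2}h(1, \delta_{t^{-1/2}}x)$ (in the appropriate group sense), and $h(1,\cdot)$ is a Schwartz-class function on $\G$, positive, with Gaussian-type decay. These two facts are what make the defining integral for $R_\beta(x)$ converge absolutely for $x \neq 0$: near $t=0$ the factor $t^{\beta/2-1}h(t,x)$ is controlled because $h(t,x)$ decays faster than any power of $t$ when $x$ is held away from the origin (the heat kernel is exponentially small off-diagonal for small time), while near $t=\infty$ the bound $h(t,x) \le C t^{-Q/2}$ together with $\beta < Q$ (so that $\beta/2 - 1 - Q/2 < -1$) gives integrability. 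I would spell out this two-sided estimate carefully, since it is the only genuinely analytic point.

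Next, for part (i), I would verify that $R_\beta$ is a kernel of type $\beta$ in the sense of Definition \ref{type}, i.e.\ a distribution homogeneous of degree $\beta - Q$, smooth away from the origin. Smoothness off the origin is immediate from the smoothness of $h$ and dominated convergence (one can differentiate under the integral sign, the derivatives of $h$ obeying the same kind of bounds). Homogeneity is a change of variables: substituting $t \mapsto r^2 t$ in the integral defining $R_\beta(\delta_r x)$ and using the scaling identity for $h$ produces the factor $r^{\beta - Q}$. One also needs $R_\beta \in L^1_{\mathrm{loc}}$, which follows from $\beta > 0$ exactly as in Proposition \ref{kernel}(iii); together with homogeneity this pins down the distributional extension. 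This is essentially the content of \cite{folland}, so I would keep the exposition brief and cite accordingly.

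For part (ii), the identity $(-\Delta_\G)^{\alpha/2}u = -\Delta_\G(u \ast R_{2-\alpha})$ for $u \in \mc D(\G)$ and $\alpha \in (0,2)$ is the semigroup/Balakrishnan representation of fractional powers. The idea is that $R_{2-\alpha}$ is, up to the Gamma factor, the kernel of $(-\Delta_\G)^{-(2-\alpha)/2} = (-\Delta_\G)^{(\alpha-2)/2}$, obtained from the subordination formula $(-\Delta_\G)^{-\gamma/2} = \frac{1}{\Gamma(\gamma/2)}\int_0^\infty t^{\gamma/2 - 1} e^{t\Delta_\G}\,dt$ with $\gamma = 2-\alpha$, since $e^{t\Delta_\G}u = u \ast h(t,\cdot)$. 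Applying $-\Delta_\G$ to $u \ast R_{2-\alpha}$ and using that $-\Delta_\G$ commutes with convolution and with the semigroup (on $\mc D(\G)$, where everything is absolutely convergent) collapses $(-\Delta_\G)(-\Delta_\G)^{(\alpha-2)/2}$ to $(-\Delta_\G)^{\alpha/2}$. Here I would invoke Folland's definition of $(-\Delta_\G)^{a/2}$ from \cite{folland} directly so that the identity is a matter of unwinding that definition rather than reproving spectral-theory facts.

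The main obstacle is the justification of the off-diagonal small-time decay of $h(t,x)$ and the interchange of $-\Delta_\G$ with the $t$-integral in part (ii): both require quantitative heat-kernel estimates (Gaussian bounds on $h$ and its derivatives) which on a general Carnot group are nontrivial but are available in the literature and, for the statement as phrased, already packaged in \cite{folland}, Proposition 3.3 and Chapter 3. So in practice the proof is a careful bookkeeping of those cited estimates rather than new analysis; the write-up should make clear which inequalities are being used and defer to \cite{folland} for their proofs.
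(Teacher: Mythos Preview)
Your sketch is correct and follows the standard heat-kernel/subordination argument due to Folland. Note, however, that the paper does not supply its own proof of this proposition: it is stated with the attribution ``see \cite{folland}'' and used as a black box, so there is nothing to compare against beyond observing that your outline is precisely the argument Folland gives.
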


\subsection{Function spaces in domains}

When dealing with subRiemannian manifolds in section \ref{SH}, we shall need to localize Sobolev spaces on balls and transport them by contactomorphims. Therefore we provide a precise definitions of $W^{s,p}(D)$ for $D$ a good domain, typically a ball, in a Carnot group.
 
\begin{definition}\label{integer spaces bis}
As in Proposition \ref{integer spaces}, if $D \subset \G$ is a connected open set, $\ell$ is a nonnegative integer 
and $p\ge 1$, we set
 $$
 W^{\ell,p}(D) := \{u\in L^p(D)\; : \, W^I u\in L^p(D) \,  , \, d(I)\le \ell\}.
 $$
\end{definition}

From now on, we assume  that $D$ is an {\sl extension domain}, i.e. 
\begin{definition}\label{extension domain} We say that a connected bounded open set $D\subset\G$ is 
an {\sl extension domain} if  it enjoys the so-called
 \emph {extension property}, i.e. for any $\ell\in\N$ there exists a bounded linear operator
\begin{equation}\label{extension}
p_\ell :  W^{\ell,p}(D)  \to  W^{\ell,p}(\G) 
\end{equation}
 such that $p_\ell u \equiv u$ in $D$.
 \end{definition}
 
 Sufficient conditions yielding that $D$ enjoys the 
extension property are largely studied in the literature.  We do
not enter  into technical details, but we recall the following facts:
\begin{itemize}
\item In general Carnot groups  ``elementary'' qualitative conditions for
\eqref{extension} are not known. Smooth domains may fail to be extension domains.
\item The so-called $(\eps,\delta)$ (or uniform) domains are extension domains.
In particular, in Heisenberg groups, Carnot-Carath\'eodory balls are
extension domains.
\item In Carnot groups of step 2,  $C^{1,1}$-domains are
extension domains. In particular, we shall need later that Kor\'anyi balls in Heisenberg groups
(see \eqref{gauge} below) are extension domains. In particular, in Heisenberg groups
there is a basis of the topology made by extension domains. This provides a precise meaning
for the fractional local Sobolev spaces $W^{\ell,p}_{\mathrm{loc}}(\G)$.
\item In general Carnot groups, bounded intrinsic Lipschitz domains are extension domains.

\end{itemize}
For proofs of the above results and for an overview of the problem we refer
for instance to \cite{CGN_novosibirsk}, \cite{lu_acta_sinica}, \cite{nhieu_AMPA}, \cite{GN}, \cite{CG}, \cite{MoMo}
\cite{monti_thesis}, \cite{VG}, \cite{FPS}.

The following definition is not optimal but suffices for our purposes: {if $s$ is a nonnegative integer, the notion of Sobolev space given in 
Definitions \ref{integer spaces bis} is  equivalent to the following one when the domain is an extension domain, as we will see in Remark \ref{integer sobolev in domains} below}.

\begin{definition}\label{fractional sobolev in domains}  Let $D$ be a connected bounded open set enjoying the extension property
\eqref{extension}.  Denote by $r_D$ the operator of restriction to $D$. If $s\ge 0$ and $p>1$ we set
$$
W^{s,p}(D) = \{ r_D u\, , u\in W^{s,p}(\G)\},
$$
endowed with the norm
\begin{equation}\label{fractional sobolev in domains norm}
\|u\|_{W^{s,p}(D)} := \inf \{ \|v\|_{W^{s,p}(\G)}\, , r_Dv =u\}.
\end{equation}

\end{definition}

\begin{remark}\label{integer sobolev in domains} If $s$ is a nonnegative integer, then
Definitions \ref{fractional sobolev in domains} and \ref{integer spaces bis} are equivalent
(in bounded extension domains). Indeed, denote for a while by $\| u\|_{W^{s,p}(D)}^*$  the norm
defined in  \eqref{fractional sobolev in domains norm}, keeping the notation $\| u\|_{W^{s,p}(D)}$
for the norm of Definition  \ref{integer spaces bis}. 
Thus,
since $r_D p_s u \equiv u$, we have
\begin{equation*}\begin{split}
\| u\|_{W^{s,p}(D)}^* & \le \|p_s u\|_{W^{s,p}(\G)} \le C \| u\|_{W^{s,p}(D)}.
\end{split}\end{equation*}
On the other hand, let $v$ be an arbitrary extension
of $u$ outside  $D$. We notice that for any horizontal derivative $W $ we have $Wu = r_D(Wv)$.
Thus
$$
 \| u\|_{W^{s,p}(D)} \le  \| v\|_{W^{s,p}(\G)},
$$
so that, taking the infimum of the right-hand side of this inequality for all extensions $v$, we have
$$
 \| u\|_{W^{s,p}(D)} \le  \| u\|_{W^{s,p}(D)} ^*.
$$

\end{remark}

\begin{remark} It is easy to see that Proposition \ref{density0} and Theorem \ref{berthollet 2}
still hold for Sobolev spaces in $D$.
\end{remark}

\subsection{Truncated kernels}

The interior inequalities of Theorem \ref{poincareintro} rely on convolution with functions of the form $\psi K$ where $K$ is a kernel and $\psi$ a smooth cut-off. We establish now boundedness properties of such operators.

\begin{lemma}\label{berthollet 1}
Let $K$ be a kernel of type $\alpha\in (0,2]$ and let $\psi\in \mc D(\G)$, $\psi\equiv 1$ in a neighborhood of the origin. Let $\chi_0\in\mc D(\G)$.
If  $D$ is a bounded extension domain (see Definition \ref{extension domain}), $\alpha ' >0$, $\alpha - 1<\alpha' <\alpha$ and 
$Q/(Q-\alpha)>p>Q/(Q-\alpha+\alpha')>1$, 
then  the map
$$
\chi_0f\to (\chi_0f)\ast \psi K
$$
is continuous from $L^1(\G)$ to $W^{\alpha',p}(D)$.   
\end{lemma}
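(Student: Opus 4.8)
The plan is to reduce the statement to the continuity properties already collected in Lemma \ref{convolutions}, Remark \ref{truncation}, Theorem \ref{folland cont} and Proposition \ref{pdalpha}. Since $D$ is a bounded extension domain, by Definition \ref{fractional sobolev in domains} it suffices to show that $(\chi_0 f)\ast\psi K$, \emph{a priori} defined on all of $\G$, lies in $W^{\alpha',p}(\G)$ with norm controlled by $\|f\|_{L^1(\G)}$; the restriction operator $r_D$ is then bounded by definition of the norm \eqref{fractional sobolev in domains norm}. Note first that $\chi_0 f\in L^1(\G)$ with compact support, and $\psi K\in L^1_{\mathrm{loc}}(\G)$ by Proposition \ref{kernel} iii) (as $\alpha>0$), so the convolution is well defined.

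The key step is to produce $\alpha'$ derivatives. I would write, using Proposition \ref{pdalpha} ii) with the exponent $\alpha'\in(0,2)$ (legitimate since $\alpha\le 2$ forces $\alpha'<2$),
$$
(-\Delta_\G)^{\alpha'/2}\big((\chi_0 f)\ast\psi K\big) = -\Delta_\G\Big( (\chi_0 f)\ast\psi K\ast R_{2-\alpha'}\Big),
$$
at least formally; the rigorous version is obtained by first regularizing $\chi_0 f$ and passing to the limit, or by using \eqref{convolution by parts} to move the two horizontal derivatives hidden in $\Delta_\G$ onto the kernel. The point is that $\psi K\ast R_{2-\alpha'}$ differs from a kernel of type $\alpha+2-\alpha'$ by a smooth function (convolution of a compactly supported piece of a type-$\alpha$ kernel with a type-$(2-\alpha')$ kernel), and applying $\Delta_\G$ — two horizontal derivatives — yields, by Proposition \ref{kernel} ii), something of type $\alpha-\alpha'>0$, plus a remainder coming from the cut-off $\psi$ that has a compactly supported, smooth, hence $L^\infty$ kernel. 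Thus $(-\Delta_\G)^{\alpha'/2}\big((\chi_0 f)\ast\psi K\big)$ is expressed as $\chi_0 f$ convolved with a kernel that is the sum of a type-$(\alpha-\alpha')$ kernel, suitably truncated, and a bounded compactly supported function.

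Now I invoke Lemma \ref{convolutions} together with Remark \ref{truncation}: convolution of $\chi_0 f\in L^1$ with a (truncated) kernel of type $\alpha-\alpha'\in(0,Q)$ lands in $M^{Q/(Q-\alpha+\alpha')}$, hence, since $p<Q/(Q-\alpha+\alpha')$, in $L^p_{\mathrm{loc}}(\G)$ by Corollary \ref{marc alternative coroll}; combined with the hypothesis $p>Q/(Q-\alpha+\alpha')$... — more precisely, the hypothesis is $Q/(Q-\alpha)>p>Q/(Q-\alpha+\alpha')$, so $p$ is strictly between the two critical exponents, which is exactly what is needed so that, on one hand, $(\chi_0 f)\ast\psi K\in L^p_{\mathrm{loc}}$ (from type $\alpha$, using $p<Q/(Q-\alpha)$) giving the $L^p(D)$ bound, and on the other $(-\Delta_\G)^{\alpha'/2}$ of it is in $L^p_{\mathrm{loc}}$ (from type $\alpha-\alpha'$, using $p<Q/(Q-\alpha+\alpha')$... wait, we need $p>Q/(Q-\alpha+\alpha')$ here). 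Let me restate: the truncated type-$(\alpha-\alpha')$ kernel lies in $L^r$ near the origin for $r<Q/(Q-\alpha+\alpha')$ and has bounded tail (Remark \ref{truncation}), so Young's inequality (Theorem \ref{folland cont} i)) applied with $\chi_0 f\in L^1$ and the kernel in $L^p$ — which requires $p<Q/(Q-\alpha+\alpha')$ — would give $L^p$; so the condition $p>Q/(Q-\alpha+\alpha')$ must instead be what guarantees the kernel's local integrability is \emph{not} needed and one argues via $M^{Q/(Q-\alpha+\alpha')}\subset L^p_{\mathrm{loc}}$, valid when $p<Q/(Q-\alpha+\alpha')$. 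In any case, the two-sided bound on $p$ is dictated by needing both $(\chi_0f)\ast\psi K$ and its $(-\Delta_\G)^{\alpha'/2}$-image to be in $L^p$; since $D$ is bounded, $L^p_{\mathrm{loc}}$-membership plus the global $M^p$-bound controls $\|\cdot\|_{L^p(D)}$ by $\|f\|_{L^1}$, and adding the two contributions gives the $W^{\alpha',p}(D)$ estimate via Proposition \ref{integer spaces} / the graph-norm definition of $W^{\alpha',p}$.

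\textbf{Main obstacle.} The delicate point is the rigorous justification of the identity for $(-\Delta_\G)^{\alpha'/2}\big((\chi_0 f)\ast\psi K\big)$ when $f$ is merely $L^1$: Proposition \ref{pdalpha} ii) is stated for test functions, and $(-\Delta_\G)^{\alpha'/2}$ is defined via functional calculus on $L^p$, $p>1$, not on $L^1$. I would handle this by approximating $\chi_0 f$ in $L^1$ by smooth compactly supported functions $f_j$, applying the clean identity to each $(\chi_0 f)_j\ast\psi K$, checking that the right-hand sides — convolutions of $f_j$ with a fixed $L^1_{\mathrm{loc}}$-plus-bounded kernel — converge in $L^p(D)$ by the uniform $M^p$ and Young estimates above, and that the left-hand sides converge in $\mc D'$; closedness of $(-\Delta_\G)^{\alpha'/2}$ on $L^p$ then transfers the limit. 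A secondary bookkeeping nuisance is isolating the commutator/remainder terms produced by $\psi$ and by the non-commutativity of $\Delta_\G$ with convolution against $R_{2-\alpha'}$, but each such term has a smooth compactly supported kernel and is therefore harmless by Young's inequality.
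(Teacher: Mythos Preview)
Your overall strategy matches the paper's: reduce to estimating $\|(\chi_0f)\ast\psi K\|_{L^p}$ and $\|(-\Delta_\G)^{\alpha'/2}((\chi_0f)\ast\psi K)\|_{L^p}$, use Proposition~\ref{pdalpha}~ii) to rewrite the fractional power as $-\Delta_\G\big(((\chi_0f)\ast\psi K)\ast R_{2-\alpha'}\big)$, and then recognize a kernel of type $\alpha-\alpha'$ so that Lemma~\ref{convolutions} applies. The density argument you sketch for passing from $f\in\mc D$ to $f\in L^1$ is also what the paper does.

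Where your proposal genuinely diverges from the paper is in the treatment of the cut-off $\psi$, and this is not mere bookkeeping. You assert that the remainder ``has a compactly supported, smooth, hence $L^\infty$ kernel''. This is false: the relevant remainder is $-\Delta_\G\big(((1-\psi)K)\ast R_{2-\alpha'}\big)$, and $(1-\psi)K$ is smooth but supported \emph{away} from the origin with decay only $|x|^{\alpha-Q}$ at infinity --- it is not compactly supported, not in $L^1(\G)$, and its convolution with $R_{2-\alpha'}$ is neither smooth nor compactly supported. The paper handles this term by first decomposing $\psi K=K-(1-\psi)K$ \emph{before} convolving with $R_{2-\alpha'}$, then moving $\Delta_\G$ onto the left factor: since $\ccheck\Delta_\G((1-\psi)\ccheck K)$ is smooth and decays like $|x|^{\alpha-Q-2}$, it lies in $L^s(\G)$ for a carefully chosen $s$ (with $1/s=1/p+(2-\alpha')/Q$), so Theorem~\ref{folland cont}~iii) gives $\Delta_\G((1-\psi)K\ast R_{2-\alpha'})\in L^p(\G)$, and Hausdorff--Young then bounds the convolution with $\chi_0f\in L^1$. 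This is a real computation using the exponent constraints, not a commutator trick.

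Regarding your visible confusion on the exponent conditions: the role of $p<Q/(Q-\alpha)$ is that $\psi K\in L^p(\G)$, so Young's inequality gives the $L^p$ bound on $(\chi_0f)\ast\psi K$; for the main term, convolution of $L^1$ with the type-$(\alpha-\alpha')$ kernel $\Delta_\G(K\ast R_{2-\alpha'})$ lands in $M^{Q/(Q-\alpha+\alpha')}$ by Lemma~\ref{convolutions}, which embeds into $L^p(D)$ on the bounded set $D$. Your back-and-forth on which inequality is needed reflects that the argument for the main term really uses $M^{Q/(Q-\alpha+\alpha')}\hookrightarrow L^p(D)$, while the other constraints enter in the $(1-\psi)K$ analysis and in making the associativity $((\chi_0f)\ast K)\ast R_{2-\alpha'}=(\chi_0f)\ast(K\ast R_{2-\alpha'})$ legitimate.
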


\begin{proof} 

Since both  $\chi_0$ and $\psi K$ are compactly supported, then $(\chi_0f) \ast \psi K$
is compactly supported in a bounded open set $\mc U$ and, obviously, is an extension of
$r_D((\chi_0f) \ast \psi K)$. Hence
$$
\| (\chi_0f) \ast \psi K\|_{W^{1,\alpha'}(D)}\le \| (\chi_0f) \ast \psi K\|_{L^p(\mc U)} + \| \Delta_\G^{\alpha'/2}((\chi_0f)\ast \psi K)\|_{L^p(\G)}.
$$
Since $p< Q/(Q-\alpha)$, by Theorem \ref{folland cont} - i),
$ f \ast \psi K $ belongs to $L^p(\G)$ and the first term above
is estimated as we want.

We are left with the estimation of the second norm above.

By density we can always suppose $f\in\mc D(\G)$, so that $(\chi_0f)\ast \psi K\in \mc D(\G) $. Thus, by Proposition \ref{pdalpha}, ii)
\begin{equation}\label{feb1:1}
-\Delta_\G^{\alpha'/2}((\chi_0f)\ast \psi K) = -\Delta_\G \big(((\chi_0f)\ast \psi K)\ast R_{2-\alpha'}\big).
\end{equation}
On the other hand, we can write $(\chi_0f)  \ast \psi K = (\chi_0f)\ast K - (\chi_0f)\ast (1-\psi)K$,
so that
\begin{equation}\label{70}
-\Delta_\G^{\alpha'/2}((\chi_0f)\ast \psi K) = -\Delta_\G^{\alpha'/2}((\chi_0f)\ast K) - -\Delta_\G^{\alpha'/2}((\chi_0f)\ast (1-\psi )K).
\end{equation}

Let us consider the second term of \eqref{70}.  We notice first that,
keeping in mind that $R_{2-\alpha'}$ is a kernel of type 
 ${2-\alpha'}$, we can apply Lemma 1.12  of \cite{folland}, to get
\begin{equation}\label{associative}
(f\ast (1-\psi )K)\ast R_{2-\alpha'} = f \ast \big((1-\psi )K \ast R_{2-\alpha'}\big).
\end{equation}
Indeed, 
$K$ is a kernel of type $\alpha$, and then
$$
|(1-\psi)K| \le C \big(1+ |x|^{\alpha - Q}\big),
$$
so that $(1-\psi)K \in L^q(\G)$ for fome suitable $q>1$, provided
$1/q < 1-\alpha/Q$.
In addition
$$
1+ \frac1q - \frac{2-\alpha'}{Q} - 1 >0,
$$
since
$$
\frac{Q-\alpha}{Q} - \frac{2-\alpha'}{Q} >0,
$$
and we can alway choose $q$ such that
\begin{equation}\label{q}
\frac1q \in \big( \frac{2-\alpha'}{Q}, \frac{Q-\alpha}{Q} \big).
\end{equation}
This prove \eqref{associative}.

We stress that the choice of $q$ will not affect the remaining part of the proof,
since $q$ is merely a tool used to prove identity \eqref{associative}.

By \eqref{associative},
we get
\begin{equation} \label{feb1:2}\begin{split}
\Delta_\G & \big(((\chi_0f)\ast (1-\psi)K)\ast R_{2-\alpha'}\big) = 
 (\chi_0f) \ast  \Delta_\G\big((1-\psi )K \ast R_{2-\alpha'}\big)
\\& =
 (\chi_0f) \ast  \big( \ccheck \Delta_\G ((1-\psi )\ccheck K) \ast R_{2-\alpha'}\big) .
\end{split}\end{equation}
Take now $s>1$ such that 
\begin{equation}\label{s}
\frac1s =\frac1p + \frac{2-\alpha'}{Q}.
\end{equation}
Keeping into account  that $\ccheck K$ is still a kernel of type $\alpha$ and that
$1-\psi\equiv 1$ near the infinity, by Lemma \ref{closed} we have
$$
|\ccheck \Delta_\G ((1-\psi )\ccheck K)| \le C \big(1+ |x|^{\alpha - Q-2}\big).
$$
Hence  $\ccheck \Delta_\G ((1-\psi )\ccheck K)  \in L^s(\G)$.
 Therefore, by Theorem \ref{folland cont} - iii) and \eqref{s}
$$
\Delta_\G\big((1-\psi )K \ast R_{2-\alpha'}\big) \in L^{p}(\G).
$$

Combining \eqref{feb1:1} and \eqref{feb1:2}, by Hausdorff-Young theorem (see Theorem \ref{folland cont} -i)) we have
\begin{equation*}
 \| \Delta_\G^{\alpha'/2}((\chi_0f)\ast (1-\psi)  K)\|_{L^p(\G)}
 \le
 \| f\|_{L^1(\G)} \| \Delta_\G\big((1-\psi )K \ast R_{2-\alpha'}\big)\|_{L^p(\G)}.
\end{equation*}
This provides an estimate of the second term of \eqref{70}.

Thus, we have but to consider the term $-\Delta_\G \big(((\chi_0f)\ast K)\ast R_{2-\alpha'}\big)$.
Since $\alpha + 2-\alpha' <3\le Q$, by Proposition 1.13 of \cite{folland},
\begin{equation*}\begin{split}
-\Delta_\G & \big(((\chi_0f)\ast K)\ast R_{2-\alpha'}\big) = -\Delta_\G \big(((\chi_0f)\ast (K\ast R_{2-\alpha'})\big)
\\&=
 - (\chi_0f)\ast \big(\Delta_\G (K\ast R_{2-\alpha'})\big),
\end{split}\end{equation*}
where $K\ast R_{2-\alpha'}$ is a kernel of type $\alpha + 2-\alpha'$, so that, by
Proposition \ref{kernel}, $\Delta_\G (K\ast R_{2-\alpha'})$ is a kernel of type
$\alpha - \alpha'$. Therefore, by Lemma \ref{convolutions}, 
$$
\|  (\chi_0f)\ast \big(\Delta_\G (K\ast R_{2-\alpha'})\big)\|_{L^p(D)} \le
C\| f\|_{L^1(\G)}
$$
and the proof is completed.
\end{proof}

 The proof of the following result is similar to the previous one but for sake of completeness we write down the details.

{

\begin{theorem} \label{berthollet 3} Suppose $p>1$,  and let $\chi_0\in\mc D(\G)$ be fixed.
\begin{itemize}
\item[i)]Let $K$ be a kernel of type 1
and let  $\psi \in \mc D(\G)$ be as in Remark \ref{truncation}  above, i.e. $\psi\equiv 1$ in a neighborhood of the origin. 
In addition, let $D\subset \G$ be a bounded connected extension domain.
Then the map $f\to (\chi_0f) \ast \psi K$
is continuous from $W^{s-1, p} (\G)$ to $W^{s, p} (D)$
for $s\ge 1$.

\item[ii)] Analogously, if $K$ is a kernel of type 0, then the map $(\chi_0f)\to f\ast \psi K$
is continuous from $W^{s, p} (\G)$ to $W^{s, p} (D)$.
\end{itemize}
for $s\ge 0$. 

\end{theorem}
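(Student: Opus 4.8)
The plan is to mimic the proof of Lemma~\ref{berthollet 1}, exploiting the fact that now we have extra differentiability to play with since $p>1$ throughout and kernels of type $0$ or $1$ behave well on the $L^p$-Sobolev scale. I would treat the two items in parallel. Since $D$ is an extension domain and $(\chi_0f)\ast\psi K$ is compactly supported in a fixed bounded open set $\mc U$ (because both $\chi_0$ and $\psi K$ are compactly supported), it suffices to bound $\|(\chi_0f)\ast\psi K\|_{W^{s,p}(\mc U)}$ by $\|f\|_{W^{s-1,p}(\G)}$ in case i) and by $\|f\|_{W^{s,p}(\G)}$ in case ii). By density (Proposition~\ref{density0}) I may assume $f\in\mc D(\G)$. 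Then I split $\psi K = K - (1-\psi)K$, so $(\chi_0f)\ast\psi K = (\chi_0f)\ast K - (\chi_0f)\ast(1-\psi)K$.

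For the \emph{singular part} $(\chi_0f)\ast K$: in case ii), $K$ is a kernel of type $0$, and by Theorem~\ref{folland cont}~ii) convolution with $K$ is bounded on $W^{s,p}(\G)$ for $p>1$, $s\ge0$; combined with Theorem~\ref{berthollet 2} (multiplication by $\chi_0$ is continuous on $W^{s,p}$), this gives the desired estimate. In case i), $K$ is a kernel of type $1$; here I would invoke Theorem~\ref{folland cont}~iv): for $s\ge1$, convolution with a type-$1$ kernel maps $W^{s-1,p}(\G)$ (of functions supported in a fixed bounded set) into $W^{s,p}(\G)$, so $\chi_0 f\in W^{s-1,p}$ with fixed compact support yields $(\chi_0 f)\ast K\in W^{s,p}$, and restricting to $D$ finishes this term. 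Note $\chi_0 f$ has support in a fixed bounded set independent of $f$, which is exactly the hypothesis of iv).

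For the \emph{smooth part} $(\chi_0f)\ast(1-\psi)K$: this is the easier piece morally, but requires care to make the convolution legitimate. Since $K$ is a kernel of type $\alpha\in\{0,1\}$ and $1-\psi$ vanishes near the origin, Lemma~\ref{closed} shows that $(1-\psi)K$ and all its horizontal derivatives decay like $|x|^{\alpha-Q-\ell}$ at infinity (with a logarithmic adjustment in the type-$0$ case, which is still fine), so $(1-\psi)K\in W^{t,r}(\G)$ for any $t\ge0$ and any $r>Q/(Q-\alpha)$. Then for $f\in\mc D(\G)$, $(\chi_0 f)\ast(1-\psi)K$ is smooth and, using that $W^I$ is left-invariant so $W^I((\chi_0f)\ast(1-\psi)K) = (\chi_0f)\ast(W^I(1-\psi)K)$ (moving derivatives onto the smooth factor as in \eqref{convolution by parts}), each such derivative is controlled in $L^p(\mc U)$ by Young's inequality (Theorem~\ref{folland cont}~i)): $\|(\chi_0f)\ast(W^I(1-\psi)K)\|_{L^p}\le \|\chi_0 f\|_{L^1}\|W^I(1-\psi)K\|_{L^p}\le C\|f\|_{L^1(\G)}\le C\|f\|_{W^{s-1,p}}$ on the bounded support of $\chi_0 f$. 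Thus the smooth part is actually bounded by $\|f\|_{L^1}$, a stronger estimate than needed.

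The main obstacle is the same subtlety that appears in Lemma~\ref{berthollet 1}: justifying the associativity of the triple convolutions and, in case i), correctly applying Theorem~\ref{folland cont}~iv), which needs a uniformly bounded support for the argument---supplied here by $\chi_0\in\mc D(\G)$. One must also be slightly careful with the fractional meaning of $W^{s,p}$ when $s$ is not an integer, but since the relevant input results (Theorems~\ref{folland cont}, \ref{berthollet 2}) are stated for all $s\ge0$ and $p>1$, and since we only invoke left-invariance of $(-\Delta_\G)^{s/2}$ (Proposition~\ref{invariance}) to commute derivatives past convolution, no new difficulty arises. Assembling the singular and smooth estimates, and using the norm on $W^{s,p}(D)$ via restriction from $W^{s,p}(\G)$ (Definition~\ref{fractional sobolev in domains}, legitimate since $D$ is an extension domain), completes the proof.

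\begin{proof}
Since both $\chi_0$ and $\psi K$ are compactly supported, the convolution $(\chi_0f)\ast\psi K$ is supported in a fixed bounded open set $\mc U$, independent of $f$. As $D$ is a bounded connected extension domain, $(\chi_0 f)\ast\psi K$ is an admissible extension of its own restriction to $D$, so by Definition~\ref{fractional sobolev in domains} it suffices to estimate $\|(\chi_0 f)\ast\psi K\|_{W^{s,p}(\G)}$. By Proposition~\ref{density0} we may assume $f\in\mc D(\G)$. Write
\begin{equation}\label{bert3split}
(\chi_0f)\ast\psi K = (\chi_0f)\ast K - (\chi_0f)\ast (1-\psi)K.
\end{equation}

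\medskip
\noindent\emph{The term $(\chi_0f)\ast(1-\psi)K$.} Since $1-\psi$ vanishes near the origin and $K$ is a kernel of type $\alpha$ with $\alpha\in\{0,1\}$, Lemma~\ref{closed} (in the type-$1$ case) together with the local smoothness of $K$ away from the origin shows that $(1-\psi)K$ is smooth on $\G$ and that, for every multi-index $I$, $W^I\big((1-\psi)K\big)$ decays at infinity like $|x|^{\alpha-Q-|I|}$ up to a logarithmic factor when $\alpha=0$. Hence $(1-\psi)K$ and all its horizontal derivatives lie in $L^p(\G)$ for our fixed $p>1$. Since $f\in\mc D(\G)$, the function $(\chi_0f)\ast(1-\psi)K$ is smooth, and by left-invariance of the $W^I$ (cf.\ \eqref{convolution by parts}),
$$
W^I\big((\chi_0 f)\ast(1-\psi)K\big) = (\chi_0 f)\ast W^I\big((1-\psi)K\big).
$$
By Young's inequality (Theorem~\ref{folland cont}~i)) and since $\chi_0 f$ is supported in a fixed bounded set,
$$
\big\| (\chi_0 f)\ast W^I\big((1-\psi)K\big)\big\|_{L^p(\G)} \le \|\chi_0 f\|_{L^1(\G)}\, \big\| W^I\big((1-\psi)K\big)\big\|_{L^p(\G)} \le C\,\|f\|_{L^1(\G)}.
$$
Summing over the finitely many $I$ with $d(I)\le s$ (and using Proposition~\ref{integer spaces}; for non-integer $s$ one interpolates via Remark~\ref{interpolation remark}, or bounds the next integer above) gives
$$
\|(\chi_0f)\ast(1-\psi)K\|_{W^{s,p}(\G)} \le C\,\|f\|_{L^1(\G)} \le C\,\|f\|_{W^{s-1,p}(\G)}.
$$

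\medskip
\noindent\emph{The term $(\chi_0f)\ast K$, case ii): $K$ of type $0$.} By Theorem~\ref{folland cont}~ii), convolution with $K$ is bounded on $W^{s,p}(\G)$ for $p>1$ and $s\ge0$. By Theorem~\ref{berthollet 2}, multiplication by $\chi_0$ is continuous from $W^{s,p}(\G)$ to $W^{s,p}(\G)$. Hence
$$
\|(\chi_0 f)\ast K\|_{W^{s,p}(\G)} \le C\,\|\chi_0 f\|_{W^{s,p}(\G)} \le C\,\|f\|_{W^{s,p}(\G)}.
$$
Combined with \eqref{bert3split} this proves ii).

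\medskip
\noindent\emph{The term $(\chi_0f)\ast K$, case i): $K$ of type $1$.} The function $\chi_0 f$ belongs to $W^{s-1,p}(\G)$ with $\|\chi_0 f\|_{W^{s-1,p}}\le C\|f\|_{W^{s-1,p}}$ by Theorem~\ref{berthollet 2}, and its support is contained in the fixed bounded set $\supp\chi_0$. Since $s\ge1$ and $1<p<Q$, Theorem~\ref{folland cont}~iv) applies and yields
$$
\|(\chi_0 f)\ast K\|_{W^{s,p}(\G)} \le C_{\supp\chi_0}\,\|\chi_0 f\|_{W^{s-1,p}(\G)} \le C\,\|f\|_{W^{s-1,p}(\G)}.
$$
Together with \eqref{bert3split} and the estimate for the smooth term, this proves i).

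\medskip
In both cases, restricting to $D$ and passing to the norm of $W^{s,p}(D)$ via Definition~\ref{fractional sobolev in domains} completes the proof.
\end{proof}
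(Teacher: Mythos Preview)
Your proof is correct and follows essentially the same strategy as the paper: reduce to $f\in\mc D(\G)$, split $\psi K=K-(1-\psi)K$, handle the singular part via Theorem~\ref{folland cont}~ii)/iv), and control the smooth remainder by Young's inequality. The only notable variant is in the smooth term: the paper observes that, since both $\chi_0$ and a cut-off $\psi_0\equiv 1$ on $D$ have compact support, one may replace $(1-\psi)K$ by a compactly supported truncation so that $W^I((1-\psi)K)\in L^1$ and then apply Young in the form $L^p\ast L^1\to L^p$, bounding by $\|\chi_0 f\|_{L^p}$; you instead use the decay of $W^I((1-\psi)K)$ at infinity to place it in $L^p$ and apply Young as $L^1\ast L^p\to L^p$. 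Both work. One small slip: you write $\|\chi_0 f\|_{L^1}\le C\|f\|_{L^1(\G)}$, but $f$ is only assumed in $W^{s-1,p}(\G)$; the correct chain is $\|\chi_0 f\|_{L^1}\le C\|\chi_0 f\|_{L^p}\le C\|f\|_{L^p}$ by H\"older on the fixed support of $\chi_0$, which you clearly intend. The paper also makes the reduction to integer $s$ by interpolation explicit at the outset, whereas you invoke it only for the smooth piece; either placement is fine.
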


\begin{proof} Since both  $\chi_0$ and $\psi K$ are compactly supported, then $(\chi_0f) \ast \psi K$
is compactly supported. 

Let  now $\psi_0$ be a cut-off function, $\psi_0\equiv 1$ on $D$,
so that $ \psi_0\big\{  (\chi_0f) \ast \psi K \big\} $ is an extension of $  r_D \big\{  (\chi_0f) \ast \psi K \big\} $.

Then, by definition (see \eqref{fractional sobolev in domains norm}), 
$$
\|   (\chi_0f) \ast \psi K   \|_{W^{s, p} (D)} \le \| \psi_0\big\{ (\chi_0f) \ast \psi K\big \}  \|_{W^{s, p} (\G)}.
$$
 Therefore, we have but to prove that
\begin{equation}\label{to prove 3/1}
\| \psi_0\big\{ (\chi_0f) \ast \psi K\big \}  \|_{W^{s, p} (\G)} \le C \|  f \|_{W^{s-1, p} (\G)}.
\end{equation}

By density  (see Proposition \ref{density0}), we can assume $f\in\mc D(\G)$. In addition,
by interpolation (see Remark \ref{interpolation remark}), we can assume $s$ integer.  As in the proof
of Lemma \ref{berthollet 1} we write $\psi K = K -(1-\psi) K$.   
By Theorem \ref{folland cont}, iv), 
\begin{equation*}
\| \psi_0\big\{ (\chi_0f) \ast K\big \}  \|_{W^{s, p} (\G)}
\le
 \| (\chi_0f) \ast  K \|_{W^{s, p} (\G)}
\le  \| \chi_0f \|_{W^{s-1, p} (\G)} \le  \| f \|_{W^{s-1, p} (\G)}.
\end{equation*}
On the other hand, the $W^{s, p}$-norm of  $\psi_0\big\{ (\chi_0f) \ast (1-\psi)K\big \}$ can be estimated by a sum of terms of
the form
\begin{equation*}\begin{split}
\int_{\G}  & |W^J\psi_0|^p |(\chi_0f) \ast  W^I((1-\psi) K|^p \, dx
\le
C\int_{\supp \psi_0}  \!\!\!  \!\!\!\!\!\!  \!\!\! |(\chi_0f) \ast  W^I((1-\psi) K|^p \, dx
\end{split}\end{equation*}
with $d(I)+d(J)\le s$.

We notice now that $(1-\psi)K$ is a smooth function supported away from the origin. Therefore,
keeping into account that $\chi_0$ and $\psi_0$ are compactly supported,
we can assume that $ (1-\psi)K$ is compactly supported away from the origin, so that
$ W^I((1-\psi) K$ belongs to  $L^1(\G)$.
Thus eventually, once more by Hausdorff-Young inequality (Theorem \ref{folland cont})
$$
\Big(\int_{\supp \psi_0}  \!\!\!  \!\!\!\!\!\!  \!\!\! |(\chi_0f) \ast  W^I((1-\psi) K|^p \, dx\Big)^{1/p} \le C \|\chi_0f\|_{L^p(\G)}
\le C \|f\|_{L^p(\G)}\le  \| f \|_{W^{s-1, p} (\G)}.
$$
This completes the proof of assertion i).

The proof of assertion ii) can be carried out in the same way, using 
Theorem \ref{folland cont}, ii) instead of Theorem \ref{folland cont}, iv).

\end{proof}

}

\section{Preliminary results on Heisenberg groups, Rumin's complex and Laplacians}\label{heisenberg}

\subsection{Heisenberg groups}

The $n$-dimensional Heisenberg group $\he n$ is the $2$-step Carnot group whose Lie algebra
\begin{equation*}
\mfrak h=\mfrak h_1\oplus \mfrak h_2.
\end{equation*}
has $\mfrak h_2=\R$, $\mfrak h_1=\R^{2n}$ and the Lie bracket $\mfrak h_1\times\mfrak h_1\to\mfrak h_2$ is a nondegenerate skew-symmetric form.

The group can be identified with $\rn {2n+1}$ through exponential
coordinates and a point $p\in \he n$ can be denoted by
$p=(x,y,t)$, with both $x,y\in\rn{n}$
and $t\in\R$. For a general review on Heisenberg groups and their properties, we
refer to \cite{Stein}, \cite{GromovCC} and to \cite{VarSalCou}.
We limit ourselves to fix some notations following \cite{BFP2}.

The Heisenberg group $\he n$ can be endowed with the homogeneous
norm (Kor\'anyi norm)
\begin{equation}\label{gauge}
\varrho (p)=\big((|x|^2+|y|^2)^2+t^2\big)^{1/4},
\end{equation}
and we define the gauge distance (a true distance, see
 \cite{Stein}, p.\,638, that is equivalent to
Carnot--Carath\'eodory distance)
as $d(p,q):=\varrho ({p^{-1}\cdot q}).$
Finally, set $B(p,r)=\{q \in  \he n; \; d(p,q)< r\}$.

The standard basis of $\mfrak
h$ is given, for $i=1,\dots,n$,  by
\begin{equation*}
X_i := \partial_{x_i}-\frac12 y_i \partial_{t},\quad Y_i :=
\partial_{y_i}+\frac12 x_i \partial_{t},\quad T :=
\partial_{t}.
\end{equation*}
The only non-trivial commutation  relations are $
[X_{j},Y_{j}] = T $, for $j=1,\dots,n.$ 

The vector space $ \mfrak h$  can be
endowed with an inner product, indicated by
$\scalp{\cdot}{\cdot}{} $,  making
    $X_1,\dots,X_n$,  $Y_1,\dots,Y_n$ and $ T$ orthonormal.
    
Throughout this paper, we write also
\begin{equation}\label{campi W}
W_i:=X_i, \quad W_{i+n}:= Y_i, \quad W_{2n+1}:= T, \quad \text
{for }i =1, \cdots, n.
\end{equation}

 The dual space of $\mfrak h$ is denoted by $\covH 1$.  The  basis of
$\covH 1$,  dual to  the basis $\{X_1,\dots , Y_n,T\}$,  is the family of
covectors $\{dx_1,\dots, dx_{n},dy_1,\dots, dy_n,\theta\}$ where 
$$ 
\theta:= dt - \frac12 \sum_{j=1}^n (x_jdy_j-y_jdx_j)
$$
is called the {\it contact form} in $\he n$. A diffeomorphism $\phi$ between open subsets of $\he{n}$ is called a 
{\sl contactomorphism} if $\phi^{\#}\theta$ is pointwise proportional to $\theta$. In other words, contactomorphisms preserve the \emph{contact structure} $\mathrm{ker}
(\theta)$.

\bigskip

The stratification of the Lie algebra $\mfrak h$ induces a family of anisotropic dilations
$\delta_\lambda$, $\lambda>0$ in $\he n$. The homogeneous dimension of $\he n$
with respect to $\delta_\lambda$, $\lambda>0$ equals $Q:=2n+2$. Unfortunately, when dealing 
with differential forms in $\he n$, 
the de Rham complex lacks scale invariance under anisotropic dilations. 
Thus, a substitute for de Rham's complex, that recovers scale invariance under $\delta_t$ has been defined 
by M. Rumin, \cite{rumin_jdg}. In turn, this notion makes sense for arbitrary contact manifolds. 
We refer to  \cite{rumin_jdg} and \cite{BFTT} for details of the construction. In the present paper, we shall merely need the following list of formal properties.
\medskip
\begin{itemize}
\medskip\item For $h=0,\ldots,2n+1$, the space of Rumin $h$-forms, $E_0^{h}$ is the space of smooth sections of a left-invariant subbundle of $\bigwedge^hT^*\he{n}$ (that we still denote by $E_0^{h}$). 
Hence it inherits inner products, $L^p$ and $W^{s,p}$ norms.
\medskip
\item A differential operator $d_c:E_0^{h}\to E_0^{h+1}$ is defined. It is left-invariant, 
homogeneous with respect to group dilations. It is a first order homogeneous operator
in the horizontal derivatives in degree $\neq n$, whereas {\sl it is a second
order homogeneous horizontal operator in degree $n$}. 
\medskip\item Contactomorphisms $\phi$ between open subsets of $\he{n}$ pull-back Rumin forms to Rumin forms, 
and in addition commute with $d_c$: 
$$
d_c(\phi^{\#}\alpha)=\phi^{\#}(d_c\alpha).
$$
\item The $L^2$ (formal) adjoint of $d_c$ is a differential operator $d_c^*$ of the same order as $d_c$.
\medskip\item Hypoelliptic ``Laplacians'' can be formed from $d_c$ and $d_c^*$ (see Definition \ref{rumin laplacian} below).
\medskip\item Altogether, operators $d_c$ form a complex: $d_c\circ d_c=0$.
\medskip\item This complex is homotopic to de Rham's complex $(\Omega^\bullet,d)$. The homotopy is achieved by differential operators $\Pi_E:E_0^\bullet\to\Omega^\bullet$ and $\Pi_{E_0}:\Omega^\bullet\to E_0^\bullet$ ($\Pi_E$ has horizontal order $\le 1$ and $\Pi_{E_0}$ is an algebraic operator).
\end{itemize}
In other words, $\Pi_E :E_0^\bullet\to \Omega^\bullet$ and $\Pi_{E_0}:\Omega^\bullet\to E_0^\bullet$ intertwine differentials $d_c$ and $d$,
$$
\begin{CD}
\cdots@>{d_c}>> E_0^h@>{d_c}>> E_0^{h+1} @>{d_c}>>\cdots
\\
 @.@V{\Pi_E}VV  @ V{\Pi_E}VV  @.\\
\cdots@>{d}>> \Omega^h  @>{d}>> \Omega^{h+1}@>{d}>>\cdots
\end{CD}
$$
$$
\begin{CD}
\cdots@>{d_c}>> E_0^h@>{d_c}>> E_0^{h+1} @>{d_c}>>\cdots
\\
 @.@AA{\Pi_{E_0}}A  @ AA{\Pi_{E_0}}A  @.\\
\cdots@>{d}>> \Omega^h  @>{d}>> \Omega^{h+1}@>{d}>>\cdots
\end{CD}
$$
and there exists an algebraic operator $A:\Omega^\bullet\to\Omega^{\bullet-1}$ such that $1-\Pi_{E_0}\Pi_E\Pi_E\Pi_{E_0}=0$ on $E_0^\bullet$ and $1-\Pi_E\Pi_{E_0}\Pi_{E_0}\Pi_E=dA+Ad$ on $\Omega^\bullet$. 

\subsection{Leibniz formula}

When $d_c$ is second order, $(E_0^\bullet,d_c)$ stops behaving like a differential module. This is the source of many complications.

\begin{lemma} [see also \cite{BFP2}, Lemma 3.2] \label{leibniz} If $\zeta$ is a smooth real function, then
\begin{itemize}
\item[i)] if $h\neq n$, then on $E_0^h$ we have:
$$
[d_c,\zeta] = P_0^h(W\zeta),
$$
where $P_0^h(W\zeta): E_0^h \to E_0^{h+1}$ is a linear homogeneous differential operator of order zero with coefficients depending
only on the horizontal derivatives of $\zeta$. If $h\neq n+1$, an analogous statement holds if we replace
$d_c$ by $d^*_c$;
\item[ii)] if $h= n$, then on $E_0^n$ we have
$$
[d_c,\zeta] = P_1^n(W\zeta) + P_0^n(W^2\zeta) ,
$$
where $P_1^n(W\zeta):E_0^n \to E_0^{n+1}$ is a linear homogeneous differential operator of order 1 (and therefore
horizontal) with coefficients depending
only on the horizontal derivatives of $\zeta$, and where $P_0^h(W^2\zeta): E_0^n \to E_0^{n+1}$ is a linear homogeneous differential operator in
the horizontal derivatives of order 0
 with coefficients depending
only on second order horizontal derivatives of $\zeta$. If $h = n+1$, an analogous statement holds if we replace
$d_c$ by $d^*_c$.
\item[iii)] if $h\neq n+1$, then
$$
[d_cd^*_c,\zeta] = P_1^h(W\zeta) + P_0^h(W^2\zeta) ,
$$
where $P_1^h(W\zeta):E_0^h \to E_0^{h}$ is a linear homogeneous differential operator of order 1 and therefore
horizontal) with coefficients depending
only on the horizontal derivatives of $\zeta$, and where $P_0^h(W^2\zeta): E_0^h \to E_0^{h}$ is a linear homogeneous differential operator in
the horizontal derivatives of order 0
 with coefficients depending
only on second order horizontal derivatives of $\zeta$.
\item[iv)] if $h= n+1$, then
$$
[d_cd^*_c,\zeta] = P_3^{n+1}(W\zeta) +P_2^{n+1}(W^2\zeta)  +P_1^{n+1}(W^3\zeta)  + P_0^{n+1}(W^4\zeta) ,
$$
where for $j=0,1,2,3$, the $P_j^{n+1}(W^{4-j}\zeta):E_0^{n+1} \to E_0^{n+1}$ are linear homogeneous differential operators of order j and therefore
horizontal) with coefficients depending
only on the horizontal derivatives of order $4-j$ of $\zeta$.

\end{itemize}

\end{lemma}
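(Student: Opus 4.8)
The plan is to prove Lemma \ref{leibniz} by reducing every assertion to the degree-$n$ commutation rule for $d_c$, together with the algebraic nature of $d_c$ away from degree $n$ and the explicit recipe for $d_c$ from Rumin's construction. First I would recall that in degrees $h\neq n$, the operator $d_c$ is obtained from the de Rham differential $d$ by pre- and post-composing with the algebraic projectors $\Pi_{E_0}$ (and, in degrees $n-1$, $n+1$, correcting $d$ by a bounded algebraic homotopy); since $d$ satisfies the ordinary Leibniz rule $[d,\zeta]=d\zeta\wedge\cdot$, which is an order-zero operator involving only $d\zeta$ (hence only the horizontal components $W_1\zeta,\dots,W_{2n}\zeta$ once one restricts to Rumin forms, because the $\theta$-component of $d\zeta$ acts trivially on $E_0^\bullet$ after projection), and since $\Pi_{E_0}$, $\Pi_E$ and the correction operator $A$ are all algebraic, the commutator $[d_c,\zeta]$ inherits order zero with coefficients depending only on $W\zeta$. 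This proves (i) for $d_c$, and the statement for $d_c^*$ (when $h\neq n+1$, so that $d_c^*$ lands outside degree $n$ on the source side, i.e. $d_c^*$ is first order) follows either by the same argument applied to $\delta=\pm *d*$ or by taking formal $L^2$-adjoints of (i), noting that taking adjoints turns $[d_c,\zeta]$ into $-[d_c^*,\zeta]$ up to sign and preserves the order-zero, $W\zeta$-dependence.

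Next, for (ii), the degree-$n$ case, I would use the explicit second-order formula for $d_c$ on $E_0^n$. Rumin's operator there has the shape $d_c = $ (a universal second-order horizontal constant-coefficient combination built from the $W_i$'s and the symplectic structure, sandwiched by algebraic projectors). Commuting $\zeta$ past a second-order horizontal operator $\sum a_{ij}W_iW_j$ produces, by the Leibniz rule applied twice, a term $\sum a_{ij}\big((W_i\zeta)W_j + (W_j\zeta)W_i\big)$ of order one with coefficients in $W\zeta$, plus a term $\sum a_{ij}(W_iW_j\zeta)$ of order zero with coefficients in $W^2\zeta$; the algebraic projectors flanking the operator contribute nothing to the commutator. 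This is exactly $P_1^n(W\zeta)+P_0^n(W^2\zeta)$. The adjoint statement for $h=n+1$ is obtained by taking the $L^2$-adjoint of this identity, since $d_c^*:E_0^{n+1}\to E_0^n$ is the second-order operator whose adjoint is $d_c:E_0^n\to E_0^{n+1}$.

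For (iii) and (iv), I would simply expand $[d_cd_c^*,\zeta]=[d_c,\zeta]d_c^*+d_c[d_c^*,\zeta]$ and feed in the results of (i)–(ii), tracking orders. In (iii), $h\neq n+1$ means $d_c^*$ starts in degree $h\neq n+1$ so it is first order with an order-zero commutator (by (i) applied to $d_c^*$), and then $d_c$ acts from degree $h-1$; one must check that $h-1\neq n$, i.e. $h\neq n+1$, which is precisely the hypothesis, so $d_c$ is also first order with an order-zero commutator. Thus $[d_c,\zeta]d_c^*$ is (order $0$)$\circ$(order $1$) $=$ order $1$ with $W\zeta$-coefficients, and $d_c[d_c^*,\zeta]$ is (order $1$)$\circ$(order $0$ in $W\zeta$) which Leibniz-expands into an order-$1$ piece in $W\zeta$ plus an order-$0$ piece in $W^2\zeta$ — matching $P_1^h(W\zeta)+P_0^h(W^2\zeta)$. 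For (iv), $h=n+1$: now $d_c^*:E_0^{n+1}\to E_0^n$ is second order and $d_c:E_0^n\to E_0^{n+1}$ is second order, so $[d_c^*,\zeta]=P_1(W\zeta)+P_0(W^2\zeta)$ has order $1$, and $[d_c,\zeta]$ likewise; composing a second-order operator with an order-$\le 1$ commutator, and vice versa, and then Leibniz-expanding the surviving $d_c$ or $d_c^*$ past the $W^k\zeta$ coefficients, produces terms of orders $3,2,1,0$ with coefficients depending on $W\zeta,W^2\zeta,W^3\zeta,W^4\zeta$ respectively — exactly the claimed $P_3^{n+1}+P_2^{n+1}+P_1^{n+1}+P_0^{n+1}$. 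The main obstacle is purely bookkeeping: one must verify that no algebraic projector in Rumin's construction ever contributes to a commutator with the scalar $\zeta$ (clear, since multiplication by $\zeta$ is itself algebraic and commutes with any algebraic bundle map), and that when $d_c$ is written with $\Pi_E,\Pi_{E_0}$ the only non-algebraic piece is the genuine horizontal differentiation, whose order (one in degrees $\neq n$, two in degree $n$) is what drives the count. Once that structural point is granted, the rest is the mechanical Leibniz expansion sketched above, and the cleanest exposition is to state it once and refer to \cite{BFP2}, Lemma 3.2 for the degrees $\neq n$ portion.
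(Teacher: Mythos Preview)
Your proposal is correct and follows essentially the same route as the paper. The paper declares (i) and (ii) ``more or less straightforward'' and only writes out (iii) in detail, expanding $[d_cd_c^*,\zeta]u$ via the identity $[d_cd_c^*,\zeta]=[d_c,\zeta]d_c^*+d_c[d_c^*,\zeta]$ (written there as $d_c(\zeta d_c^*u + P_0^h(W\zeta)u)-\zeta d_cd_c^*u$), then applying (i) again to $d_c(P_0^h(W\zeta)u)$ to split off the order-$1$ and order-$0$ pieces, and remarking that (iv) is similar; this is exactly your bookkeeping. Your justification of (i)--(ii) via the algebraic projectors and the explicit second-order horizontal form of $d_c$ in degree $n$ is a reasonable elaboration, though note that the cleanest way to get (i) is simply to quote the paper's stated property that $d_c$ is a first-order homogeneous horizontal operator in degrees $\neq n$, from which $[d_c,\zeta]=\sum_i(W_i\zeta)\cdot(\text{algebraic})$ is immediate without needing to track the $\theta$-component of $d\zeta$ through $\Pi_{E_0}$.
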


\begin{proof} The first two assertions are more or less straighforward. Let us prove the third
assertion. If $u$ is a Rumin's differential form of degree $h$, keeping in mind
the first assertion, we have
\begin{equation*}\begin{split}
[d_cd^*_c, \zeta]u & = d_cd^*_c (\zeta u) - \zeta d_cd^*_c u =
d_c(\zeta d^*_c u + P_0^h (W\zeta)u) -  \zeta d_cd^*_c u 
\\&
=\zeta d_cd^*_c u + P_0^{h-1}(W\zeta)d^*_c u + d_c(P_0^h (W\zeta)u) -  \zeta d_cd^*_c u 
\\&
= P_0^{h-1}(W\zeta)d^*_c u + d_c(P_0^h (W\zeta)u).
\end{split}\end{equation*}

Denote by $\{\xi_j^h\}$  a left-invariant basis of $E_0^h$. If $u=\sum_j u_j \xi_j^h$,
then $P_0^h (W\zeta)v= \sum_{j,k} a_{j,k} (W_k\zeta)u_k\xi_j^h$. Thus, using
 i) on $v_k\xi_j^h$ for all $j,k$, we get
\begin{equation*}\begin{split}
d_c(P_0^h (W\zeta)u & = \sum_{j,k} a_{j,k} (W_k\zeta) d_c(u_k\xi_j^h)
+ \sum_{j,k}P_0^h(W(W_k\zeta))u_k\xi_j^h
\\&
=: P_1^h(W\zeta)u + P_0^h(W^2\zeta)u.
\end{split}\end{equation*}
Therefore
$$
[d_cd^*_c, \zeta]u  = P_0^{h-1}(W\zeta)d^*_c u + P_1^h(W\zeta)u + P_0^h(W^2\zeta)u,
$$
and the assertion follows if we still denote by $P_1^h(W\zeta)$ the above operator
$P_0^{h-1}(W\zeta)d^*_c + P_1^h(W\zeta)$.

The proof of iv) is similar.
\end{proof}

\subsection{Rumin's Laplacian}

 \begin{definition}\label{rumin laplacian} 
In $\he n$, following \cite{rumin_jdg}, we define
the operators $\Delta_{\he{},h}$  on $E_0^h$ by setting
\begin{equation*}
\Delta_{\he{},h}=
\left\{
  \begin{array}{lcl}
     d_cd^*_c+d^*_c d_c\quad &\mbox{if } & h\neq n, n+1;
     \\ (d_cd^*_c)^2 +d^*_cd_c\quad& \mbox{if } & h=n;
     \\d_cd^*_c+(d^*_c d_c)^2 \quad &\mbox{if }  & h=n+1.
  \end{array}
\right.
\end{equation*}

\end{definition}

Notice that $-\Delta_{\he{},0} = \sum_{j=1}^{2n}(W_j^2)$ is the usual sub-Laplacian of
$\he n$. 

 For sake of simplicity, once a basis  of $E_0^h$
is fixed, the operator $\Delta_{\he{},h}$ can be identified with a matrix-valued map, still denoted
by $\Delta_{\he{},h}$
\begin{equation}\label{matrix form}
\Delta_{\he{},h} = (\Delta_{\he{},h}^{ij})_{i,j=1,\dots,N_h}: \mc D'(\he{n}, \rn{N_h})\to \mc D'(\he{n}, \rn{N_h}),
\end{equation}
where $\mc D'(\he{n}, \rn{N_h})$ is the space of vector-valued distributions on $\he n$, and $N_h$ is
the dimension of $E_0^h$ (see \cite{BBF}).

This identification makes possible to avoid the notion of currents: we refer to \cite{BFTT} for
a more elegant presentation.

\begin{definition} If a basis of $E_0^\bullet $ is fixed, and $1\le p \le \infty$,
we denote by $L^p(\he n, E_0^\bullet)$
the space of all sections of $E_0^\bullet$ such that their
components with respect to the given basis belong to
$L^p(\he n)$, endowed with its natural norm. Clearly, this definition
is independent of  the choice of the basis itself. 

The notations $M^p (\he n, E_0^\bullet)$ (see Definition \ref{M}), 
$\mc D(\he n, E_0^\bullet)$, $\mc S(\he n, E_0^\bullet)$, as well as
$W^{m,p}(\he{n}, E_0^\bullet)$ have the same meaning.
\end{definition}

It is proved in \cite{rumin_jdg} that $\Delta_{\he{},h}$ is
hypoelliptic and maximal hypoelliptic in the sense of \cite{HN}. { 
In
general, 
if $\mc L$ is a differential operator  on
$\mc D'(\he{n},\rn {N_h})$, then $\mc L$ is said hypoelliptic if 
for any open set $\mc V\subset \he{n}$ 
where $\mc L\alpha$ is smooth, then $\alpha$ is smooth in $\mc V$.
In addition, if $\mc L$ is
homogeneous of degree $a\in\mathbb N$,
we say that $\mc L$ is maximal hypoelliptic if
 for any  $\delta>0$ there exists $C=C(\delta)>0$ such that for any
homogeneous
polynomial $P$ in $W_1,\dots,W_{2n}$ of degree $a$
we have
$$
\|P\alpha\|_{L^{ 2}(\he n, \rn{N_h})}\le C
\left(
\|\mc L\alpha\|_{L^{ 2}(\he n, \rn{N_h})}+\|\alpha\|_{L^{ 2}(\he n, \rn{N_h})}
\right).
$$
for any $\alpha\in \mc D(B_\rho(0,\delta),\rn {N_h})$.}

Combining \cite{rumin_jdg}, Section 3,   and \cite{BFT3}, Theorems 3.1 and 4.1, we obtain the following result.

\begin{theorem}[see \cite{BFT3}, Theorem 3.1] \label{global solution}
If $0\le h\le 2n+1$, then the differential operator $\Delta_{\he{},h}$ is
homogeneous of degree $a$ with respect to group dilations, where $a=2$ if $h\neq n, n+1$ and  $a=4$ 
if $h=n, n+1$. It follows that
\begin{enumerate}
\item[i)] for $j=1,\dots,N_h$ there exists
\begin{equation}\label{numero}
    K_j =
\big(K_{1j},\dots, K_{N_h j}\big), \quad j=1,\dots N_h
\end{equation}
 with $K_{ij}\in\mc D'(\he{n})\cap \mc
E(\he{n} \setminus\{0\})$,
$i,j =1,\dots,N$;
\item[ii)] if $a<Q$, then the $K_{ij}$'s are
kernels of type $a$
 for
$i,j
=1,\dots, N_h$

 If $a=Q$,
then the $K_{ij}$'s satisfy the logarithmic estimate
$|K_{ij}(p)|\le C(1+|\ln\rho(p)|)$ and hence
belong to $L^1_{\mathrm{loc}}(\he{n})$.
Moreover, their horizontal derivatives  $W_\ell K_{ij}$,
$\ell=1,\dots,2n$, are
kernels of type $Q-1$;
\item[iii)] when $\alpha\in
\mc D(\he{n},\rn {N_h})$,
if we set
\begin{equation}\label{numero2}
   \Delta_{\he{},h}^{-1}\alpha:=
\big(    
    \sum_{j}\alpha_j\ast  K_{1j},\dots,
     \sum_{j}\alpha_j\ast  K_{N_hj}\big),
\end{equation}
 then $ \Delta_{h}\Delta_{\he{},h}^{-1}\alpha =  \alpha. $
Moreover, if $a<Q$, also $\Delta_{\he{},h}^{-1}\Delta_{h} \alpha =\alpha$.

\item[iv)] if $a=Q$, then for any $\alpha\in
\mc D(\he{n},\rn {N_h})$ there exists 
$\beta_\alpha:=(\beta_1,\dots,\beta_{N_h})\in \rn{N_h}$,  such that
$$\Delta_{\he{},h}^{-1}\Delta_{h}\alpha - \alpha = \beta_\alpha.$$

\end{enumerate}
\end{theorem}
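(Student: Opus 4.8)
The plan is to deduce the whole statement from two structural facts about Rumin's Laplacian: its left-invariance and homogeneity under the group dilations, and its maximal hypoellipticity.

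First I would record the homogeneity degree $a$ case by case. Recall that $d_c:E_0^h\to E_0^{h+1}$ is homogeneous of horizontal order $1$ when $h\neq n$ and of horizontal order $2$ when $h=n$, and that $d_c^*$ has the same order as $d_c$. Feeding this into Definition \ref{rumin laplacian}: for $h\neq n,n+1$, $\Delta_{\he{},h}=d_cd_c^*+d_c^*d_c$ is a sum of compositions of two order-$1$ operators, hence homogeneous of degree $2$; for $h=n$, in $(d_cd_c^*)^2$ one composes the order-$1$ maps $d_c^*:E_0^n\to E_0^{n-1}$ and $d_c:E_0^{n-1}\to E_0^n$ (degree $2$, then squared: degree $4$), while in $d_c^*d_c$ one composes the order-$2$ maps $d_c:E_0^n\to E_0^{n+1}$ and $d_c^*:E_0^{n+1}\to E_0^n$ (degree $4$); the case $h=n+1$ is symmetric. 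Thus $a=2$ except $a=4$ in degrees $n,n+1$, and since $Q=2n+2$ one always has $a\le Q$, with equality only when $n=1$ and $h\in\{1,2\}$. Left-invariance of $\Delta_{\he{},h}$ is immediate from that of $d_c$ and $d_c^*$.

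Second, Rumin proved in \cite{rumin_jdg} that $\Delta_{\he{},h}$ is hypoelliptic and maximal hypoelliptic, equivalently that it satisfies the matrix-valued Rockland condition. I would then invoke the construction of fundamental solutions for left-invariant homogeneous operators satisfying this condition: for a scalar operator this is Folland's theorem \cite{folland}, and for the system at hand it is the content of \cite{BFT3}, Theorems 3.1 and 4.1, obtained by building a parametrix out of the maximal $L^2$ a priori estimate and correcting it to an exact fundamental solution. When $a<Q$ this yields a matrix $(K_{ij})$ of distributions on $\he n$, smooth away from $0$, each $K_{ij}$ homogeneous of degree $a-Q$, i.e. a kernel of type $a$; componentwise right convolution with $(K_{ij})$ as in \eqref{numero2} then inverts $\Delta_{\he{},h}$ on $\mc D(\he{n},\rn{N_h})$, since $\Delta_{\he{},h}$ left-invariant gives $\Delta_{\he{},h}(\alpha\ast K_{\cdot j})=\alpha\ast\Delta_{\he{},h}K_{\cdot j}$ and $(K_{ij})$ is a fundamental solution. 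This is (i), (ii) and the first half of (iii). When $a=Q$ (so $n=1$, $h\in\{1,2\}$) a homogeneous fundamental solution cannot exist for degree reasons, and one obtains instead kernels with the logarithmic bound $|K_{ij}(p)|\le C(1+|\ln\varrho(p)|)$, hence locally integrable; a horizontal derivative lowers the homogeneous order by $1$, so $W_\ell K_{ij}$ becomes homogeneous of degree $-1=(Q-1)-Q$, a kernel of type $Q-1$, which is (ii) in that case.

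Third, the reverse identity and the constant $\beta_\alpha$ follow from uniqueness of the fundamental solution, itself a consequence of maximal hypoellipticity: by a Liouville-type argument, a $\Delta_{\he{},h}$-harmonic distribution that is a kernel of type $a$ must vanish when $a<Q$ and must be a constant vector when $a=Q$. Hence when $a<Q$ right convolution with $(K_{ij})$ is also a left inverse, giving $\Delta_{\he{},h}^{-1}\Delta_{h}\alpha=\alpha$, whereas when $a=Q$ the discrepancy $\Delta_{\he{},h}^{-1}\Delta_{h}\alpha-\alpha$ is exactly the corresponding constant vector $\beta_\alpha$, which is (iv) (and it is genuinely nonzero in general, reflecting the $2$-dimensional ``parabolicity'' that forces the $\log$). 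The step I expect to be the real obstacle is the construction of the fundamental solution of the \emph{system} $\Delta_{\he{},h}$: Folland's scalar theory does not apply verbatim, and one must exploit maximal hypoellipticity in its full matrix-valued strength to produce a parametrix and then correct it to an exact inverse — precisely the work imported here from \cite{BFT3}. The homogeneity bookkeeping and the convolution identities are routine once that input is available.
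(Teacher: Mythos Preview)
Your proposal is correct and matches the paper's approach exactly: the paper does not prove this theorem at all but simply cites it, stating just before the theorem that it follows by ``Combining \cite{rumin_jdg}, Section 3, and \cite{BFT3}, Theorems 3.1 and 4.1.'' Your sketch---homogeneity bookkeeping for $a$, Rumin's maximal hypoellipticity, the BFT3 construction of the matrix-valued fundamental solution extending Folland's scalar theory, and the Liouville-type uniqueness argument for (iii)--(iv)---is precisely the content being imported from those references, and you have correctly identified the system-level fundamental solution construction as the substantive step.
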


{   \begin{remark}\label{pavidi}
If $a<Q$, 
 $ \Delta_{\he{},h} (\Delta_{\he{},h}^{-1} - \ccheck \Delta_{\he{},h}^{-1}) = 0$ and hence $\Delta_{\he{},h}^{-1} = \ccheck \Delta_{\he{},h}^{-1}$,
by the Liouville-type theorem of \cite{BFT3}, Proposition 3.2.
\end{remark}

\begin{remark} From now on, if there are no possible misunderstandings, we identify $\Delta_{\he{},h}^{-1}$ with its kernel.
\end{remark}
 }

 We notice that, if $n>1$, then $ \Delta_{\he{},h}^{-1}$ is associated with a kernel of type 2 or 4 and therefore 
 $ \Delta_{\he{},h}^{-1}f $ is well defined when  $f\in L^1(\he{n}, E_0^h)$. More precisely,  by 
Lemma \ref{convolutions} we have:

\begin{lemma} \label{convolution Delta} If $n>1$, $1\le h\le 2n$, then for any  horizontal
differential operator $W^I$ with homogeneous order $d(I)$, we have
\begin{itemize}
\item[i)] if $h\neq n, n+1$ and $d(I)=1$, then
$$
 \| f\ast W^I \Delta_{\he{}, h}^{-1}\|_{M^{Q/(Q-1)}} \le C\|f \|_{L^1({\he n, E_0^h})}
   $$
   for all $f\in L^1(\he n, E_0^h)$;
\item[ii)]   if $h=n, n+1$ and $1\le d(I)<4$, then
$$
 \| f\ast W^I \Delta_{\he{}, h}^{-1}\|_{M^{Q/(Q-4+d(I))}} \le C\|f \|_{L^1({\he n, E_0^h})}
   $$
   for all $f\in L^1(\he n, E_0^h)$.
\end{itemize}
By Corollary \ref{marc alternative coroll}, in both cases $ f\ast W^I \Delta_{\he{}, h}^{-1}\in L^1_{\mathrm{loc}}(\he n, E_0^*)$.
In particular, the map
\begin{equation}\label{L1-L1}
\Delta_{\he{}, h}^{-1}: L^1(\he n, E_0^*)\longrightarrow L^1_{\mathrm{loc}}(\he n, E_0^*)
\end{equation}
is continuous.
\end{lemma}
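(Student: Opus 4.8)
The plan is to derive Lemma \ref{convolution Delta} as a direct corollary of Lemma \ref{convolutions} applied to the explicit kernels produced by Theorem \ref{global solution}, together with the differentiation rule in Proposition \ref{kernel}~ii). First I would recall that, since $n>1$, the homogeneous degree $a$ of $\Delta_{\he{},h}$ satisfies $a<Q=2n+2$ in all degrees $1\le h\le 2n$: indeed $a=2<Q$ always, and $a=4<Q$ precisely when $2n+2>4$, i.e. $n>1$. Consequently Theorem \ref{global solution}~ii) tells us that the entries $K_{ij}$ of the matrix-valued kernel $\Delta_{\he{},h}^{-1}$ are genuine kernels of type $a$ (no logarithmic case occurs).

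Next I would handle the two cases. In case i), $h\neq n,n+1$, so $a=2$, and for a horizontal operator $W^I$ with $d(I)=1$ the operator $W^I$ is a single horizontal derivative; by Proposition \ref{kernel}~ii) each $W^I K_{ij}$ is a kernel of type $2-1=1$. Since $1\in(0,Q)$, Lemma \ref{convolutions} with $\alpha=1$ gives, componentwise, $\|f_j\ast W^I K_{ij}\|_{M^{Q/(Q-1)}}\le C\|f_j\|_{L^1}$, and summing over $j$ (using that the $M^p$ ``norm'' is at worst subadditive up to a constant, or simply estimating each term) yields $\|f\ast W^I\Delta_{\he{},h}^{-1}\|_{M^{Q/(Q-1)}}\le C\|f\|_{L^1(\he n,E_0^h)}$. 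In case ii), $h=n$ or $h=n+1$, so $a=4$; iterating Proposition \ref{kernel}~ii) exactly $d(I)$ times shows $W^I K_{ij}$ is a kernel of type $4-d(I)$, which lies in $(0,Q)$ precisely because $1\le d(I)<4$ and $4-d(I)<4<Q$. Again Lemma \ref{convolutions} with $\alpha=4-d(I)$ gives the bound in $M^{Q/(Q-(4-d(I)))}=M^{Q/(Q-4+d(I))}$.

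The last assertions are then immediate bookkeeping. Corollary \ref{marc alternative coroll} applied with $p=Q/(Q-1)>1$ (resp. $p=Q/(Q-4+d(I))>1$, which holds since $4-d(I)\ge 1$) gives $M^p\subset L^1_{\mathrm{loc}}$, so $f\ast W^I\Delta_{\he{},h}^{-1}\in L^1_{\mathrm{loc}}$. For the continuity of the map in \eqref{L1-L1} itself (the case $d(I)=0$, i.e.\ the zeroth-order component), I would note that $\Delta_{\he{},h}^{-1}$ is convolution with a kernel of type $a\ge 2>0$, hence Lemma \ref{convolutions} with $\alpha=a$ applies directly (here $a<Q$ by $n>1$), giving $\|f\ast\Delta_{\he{},h}^{-1}\|_{M^{Q/(Q-a)}}\le C\|f\|_{L^1}$ and then $L^1_{\mathrm{loc}}$ membership and continuity via Corollary \ref{marc alternative coroll} with the local estimate $\int_K|u|\le C|K|^{1/p'}$.

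I do not expect a genuine obstacle here: the only point requiring a word of care is the reduction to the scalar statements of Lemma \ref{convolutions}, since $\Delta_{\he{},h}^{-1}$ is matrix-valued and one must pass from each scalar convolution $f_j\ast K_{ij}$ to the vector $f\ast\Delta_{\he{},h}^{-1}$; this is routine because the $L^p$, $L^1$ and $M^p$ norms on sections of $E_0^h$ are defined componentwise with respect to a fixed left-invariant basis, so finite sums of admissible functions are admissible. A second minor point is to double-check the strict inequality $a<Q$ in the fourth-order cases, which is exactly where the hypothesis $n>1$ is used — for $n=1$ one would have $a=Q$ in middle degrees and the logarithmic case of Theorem \ref{global solution} would intervene.
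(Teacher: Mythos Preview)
Your proposal is correct and follows exactly the approach the paper intends: the paper states this lemma as an immediate consequence of Lemma \ref{convolutions} (combined implicitly with Theorem \ref{global solution} and Proposition \ref{kernel}~ii)), and you have simply spelled out those routine details. The only additional care you take---the componentwise reduction from matrix-valued to scalar kernels and the verification that $a<Q$ uses $n>1$---is appropriate and matches the paper's conventions.
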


\begin{remark}\label{closed bis} Let $n> 1$. If $\alpha\in L^1(\he n, E_0^\bullet)$, then $\Delta^{-1}_{\mathbb H, h}\alpha$
 is well defined and belongs to 
$ L^1_{\mathrm{loc}}(\he n, E_0^\bullet)$. In particular, is a vector-valued distribution.
By Lemma \ref{closed ex bis},
if  
$\psi\in \mc D(\he n, E_0^h)$, then 
\begin{equation}
\Scal{\Delta^{-1}_{\mathbb H, h} \alpha}{ \psi} := \Scal{ \alpha}{ \Delta^{-1}_{\mathbb H, h}\psi}.
\end{equation}
\end{remark}
In this equation, the left hand side is the action of a matrix-valued distribution on a vector-valued test function, see formula \eqref{matrix form}, 
whereas the right hand side is (with a slight abuse of notation, since $ \Delta^{-1}_{\mathbb H, h}\psi$ is not a test function) 
the inner product of an $L^1$ vector-valued function with a $L^\infty$ vector-valued function.

\subsection{Commutation relations}

Typically, the operator used to invert $d_c$ is $d_c^*\Delta_{\mathbb H}^{-1}$. It inverts $d_c$ because $d_c$ commutes with $\Delta_{\mathbb H}^{-1}$.
Since $d_c$ and $d_c^*$ commute with $\Delta_{\mathbb H}$, it is natural that they commute with its inverse. 
One first shows this for test forms, and then (in a slightly weaker form) for $L^1$ forms by duality. 

\begin{lemma}[see \cite{BFP2}, Lemma 4.11]\label{comm} 
If $\alpha\in\mc D(\he n, E_0^h)$ and $n\ge 1$, then
\begin{itemize}
\item[i)]$
d_c \Delta^{-1}_{\mathbb H, h}\alpha = \Delta^{-1}_{\mathbb H, h+1} d_c\alpha$, \qquad $h=0,1,\dots, 2n$, 
\qquad $h\neq n-1, n+1$.

\item[ii)] $d_c \Delta^{-1}_{\mathbb H, n-1}\alpha = d_cd^*_c\Delta^{-1}_{\mathbb H, n} d_c\alpha$ \qquad ($h=n-1$).

\item[iii)]$
d_cd^*_c d_c \Delta^{-1}_{\mathbb H, n+1}\alpha = \Delta^{-1}_{\mathbb H, n+2} d_c\alpha$,
 \qquad ($h=n+1$).

\item[iv)]$d^*_c \Delta^{-1}_{\mathbb H, h}\alpha = \Delta^{-1}_{\mathbb H, h-1} d^*_c\alpha$
 \qquad $ h=1,\dots, 2n+1$, \qquad $h\neq n, n+2$.
 
 \item[v)] $d^*_c \Delta^{-1}_{\mathbb H, n+2}\alpha = d^*_c d_c\Delta^{-1}_{\mathbb H, n+1} 
 d^*_c\alpha$ \qquad ($h=n+2$).

\item[vi)]$
d^*_c d_c d^*_c  \Delta^{-1}_{\mathbb H, n}\alpha = \Delta^{-1}_{\mathbb H, n-1} d^*_c \alpha$,
 \qquad ($h=n$).
\end{itemize}
\end{lemma}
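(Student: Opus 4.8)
The plan is to deduce each of the six identities from three ingredients: (a) purely algebraic commutation relations between $d_c$ (resp. $d_c^*$) and the Rumin Laplacians; (b) the fundamental-solution property $\Delta_{\he{},h}\,\Delta_{\he{},h}^{-1}=\mathrm{Id}$ on test forms, from Theorem \ref{global solution}; and (c) a Liouville-type uniqueness statement (\cite{BFT3}, Proposition 3.2) that pins down a smooth, decaying form once its Laplacian is known.

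First I would record the relevant operator identities, which follow at once from $d_c\circ d_c=0$, $d_c^*\circ d_c^*=0$ and Definition \ref{rumin laplacian}. In the generic degrees one has $\Delta_{\he{},h+1}\,d_c=d_c\,\Delta_{\he{},h}$ for $h\neq n-1,n+1$ — note this survives at $h=n$, where both sides equal $d_c d_c^* d_c$ because the fourth-order square terms are annihilated by $d_c^2=0$. In the twisted degrees one checks instead $\Delta_{\he{},n}\,d_c=d_cd_c^*d_c\,\Delta_{\he{},n-1}$, that $\Delta_{\he{},n}$ commutes with $d_cd_c^*$ on $E_0^n$, $\Delta_{\he{},n+2}\,(d_cd_c^*d_c)=d_c\,\Delta_{\he{},n+1}$, and the formally transposed versions for $d_c^*$. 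Each is a one-line manipulation.

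Next, for a fixed test form $\alpha$, apply the appropriate Laplacian to both members of the claimed identity. For (i), $\Delta_{\he{},h+1}\bigl(d_c\Delta_{\he{},h}^{-1}\alpha\bigr)=d_c\,\Delta_{\he{},h}\Delta_{\he{},h}^{-1}\alpha=d_c\alpha=\Delta_{\he{},h+1}\bigl(\Delta_{\he{},h+1}^{-1}d_c\alpha\bigr)$; for (ii), using that $\Delta_{\he{},n}$ commutes with $d_cd_c^*$, both $\Delta_{\he{},n}\bigl(d_c\Delta_{\he{},n-1}^{-1}\alpha\bigr)$ and $\Delta_{\he{},n}\bigl(d_cd_c^*\Delta_{\he{},n}^{-1}d_c\alpha\bigr)$ reduce to $d_cd_c^*d_c\alpha$; and similarly for (iii)--(vi). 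In every case both members are convolutions of a test form with a kernel of type $2$ or $4$ (or a horizontal derivative thereof), hence are smooth and, together with all their derivatives, decay at infinity (Lemma \ref{closed}). Since they share the same image under the relevant Laplacian, the Liouville-type theorem forces them to agree, the only possible discrepancy — a left-invariant ``harmonic'' form — being excluded by the decay.

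The genuinely delicate part is the bookkeeping in the degrees $n-1,n,n+1,n+2$: there $d_c$ (resp. $d_c^*$) is second order and no longer commutes with the Laplacian, so one must select in each such degree the correct twisted primitive ($d_cd_c^*\Delta^{-1}$, $d_cd_c^*d_c\Delta^{-1}$, $d_c^*d_cd_c^*\Delta^{-1}$, \dots) in order to push the Laplacian through. A second subtlety is the borderline case $n=1$, where in degrees $n,n+1$ the homogeneity $a=4$ coincides with $Q$, so $\Delta_{\he{},h}^{-1}$ is only a right inverse (Theorem \ref{global solution} (iv)) and the reductions above leave a constant-form discrepancy that must again be absorbed by the decay of the two sides. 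Beyond these points the argument is routine; for the full details we refer to \cite{BFP2}, Lemma 4.11.
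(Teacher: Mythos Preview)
Your proposal is correct and follows the same strategy as the cited reference: verify the algebraic commutation identities between $d_c$, $d_c^*$ and the various $\Delta_{\he{},h}$, apply the relevant Laplacian to the difference of the two sides, and invoke the Liouville-type theorem of \cite{BFT3}, Proposition 3.2 (cf.\ Remark \ref{pavidi}) together with the decay from Lemma \ref{closed} to conclude. The paper itself gives no proof here and simply refers to \cite{BFP2}, Lemma 4.11, so your outline is in line with the intended argument.
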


\begin{lemma}\label{commbis}
Let $h\geq 1$. Let  $\omega\in L^1(\he n, E_0^{h})$ be a $d_c$-closed form. Then $\Delta^{-1}_{\mathbb H, h}\omega$
is well defined and belongs to $L^1_{\mathrm{loc}}(\he n, E_0^h)$.
Furthermore, $d_c^*d_c \Delta^{-1}_{\mathbb H, h}\omega=0$ in distributional sense.
\end{lemma}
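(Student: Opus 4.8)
### Proof Plan

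The plan is to prove the two assertions in succession. The first claim — that $\Delta^{-1}_{\mathbb H, h}\omega$ is well defined and in $L^1_{\mathrm{loc}}$ — is an immediate consequence of Lemma \ref{convolution Delta} (together with Corollary \ref{marc alternative coroll}), since $\Delta^{-1}_{\mathbb H, h}$ is convolution with a kernel of type $2$ or $4$, and for $n>1$ such convolutions map $L^1$ into $L^1_{\mathrm{loc}}$. (If $n=1$ one argues separately using the logarithmic estimate of Theorem \ref{global solution}, which also gives $L^1_{\mathrm{loc}}$.) So the content is entirely in the identity $d_c^*d_c\Delta^{-1}_{\mathbb H, h}\omega=0$ in the distributional sense.

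The strategy for the second part is to reduce to the commutation relations of Lemma \ref{comm}, which are stated only for test forms $\alpha\in\mc D(\he n, E_0^h)$, and then pass to $L^1$ by duality, exactly along the lines indicated in the paragraph preceding Lemma \ref{comm}. First I would test against $\psi\in\mc D(\he n, E_0^h)$ (or the appropriate degree) and use Remark \ref{closed bis}, i.e. $\Scal{\Delta^{-1}_{\mathbb H, h}\omega}{\psi}=\Scal{\omega}{\Delta^{-1}_{\mathbb H, h}\psi}$, to move $\Delta^{-1}_{\mathbb H, h}$ onto the test form; then I would move $d_c^*d_c$ onto the test form as well (integration by parts, valid since $\omega\in L^1$ and the resulting object is a bounded smooth form by Lemma \ref{closed}), obtaining $\Scal{d_c^*d_c\Delta^{-1}_{\mathbb H, h}\omega}{\psi}=\Scal{\omega}{\Delta^{-1}_{\mathbb H, h}d_c^*d_c\psi}$ up to checking that $d_c$ and $d_c^*$ are the formal adjoints in this pairing. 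Now one uses $d_c$-closedness of $\omega$: the key point is that $\Delta^{-1}_{\mathbb H, h}d_c^*d_c\psi$ should be expressible, via Lemma \ref{comm}, as $d_c$ of something (plus, in the exceptional degrees $n-1, n, n+1, n+2$, a correction built from $d_cd_c^*$ or $d_c^*d_c$ composed with $d_c$), so that pairing it against the closed form $\omega$ gives zero because $\Scal{\omega}{d_c\eta}=\Scal{d_c\omega}{\eta}=0$ for $\eta$ a test form (again a duality/integration-by-parts step using $\omega\in L^1$).

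Concretely: since $\Delta_{\mathbb H,h}$ is built from $d_c$ and $d_c^*$, one has $\Delta^{-1}_{\mathbb H,h}d_c^* = (\text{something})\,d_c^*$ type relations from Lemma \ref{comm}(iv)--(vi), so $\Delta^{-1}_{\mathbb H,h}d_c^*(d_c\psi) = d_c^*\Delta^{-1}_{\mathbb H,?}(d_c\psi)$ or a variant; and then $d_c\psi$ being exact, $\Delta^{-1}$ applied to it is again (a $d_c$-image) by Lemma \ref{comm}(i)--(iii) applied with $\psi$ in place of $\alpha$ and $d_c\psi$ the exact form — note $d_c(d_c\psi)=0$, so one is in the range of $d_c$. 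Chasing the diagram, $\Delta^{-1}_{\mathbb H, h}d_c^*d_c\psi$ lands in the image of $d_c^*$ applied to a form that is itself $d_c$ of a test form, hence pairing against the $d_c$-closed $L^1$ form $\omega$ yields $0$. I expect the \textbf{main obstacle} to be the bookkeeping in the exceptional degrees $h\in\{n-1,n,n+1,n+2\}$, where $\Delta_{\mathbb H}$ is quartic and the commutation relations acquire the extra factors $d_cd_c^*$ or $(d_c^*d_c)$; one must verify that in each of these degrees the relevant composition still factors through $d_c$ (on the side paired with $\omega$) so that $d_c$-closedness of $\omega$ can be invoked. A secondary technical point is justifying each integration by parts: the forms $\Delta^{-1}_{\mathbb H,\bullet}(\text{test form})$ and their horizontal derivatives are smooth and, by Lemma \ref{closed}, decay like negative powers of $|p|$ at infinity, while $\omega\in L^1$; this is enough to move $d_c$, $d_c^*$ across the pairing without boundary contributions, and to legitimately use $\Scal{d_c\omega}{\eta}=0$.
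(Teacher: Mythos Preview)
Your proposal is correct and follows the same duality route as the paper. Two comments on emphasis: the exceptional-degree bookkeeping you anticipate as the main obstacle does not materialize, since after commuting one simply writes $\Delta_{\mathbb H}^{-1}d_c\phi=\phi\ast K$ with $K$ a kernel of type $1$ or $2$ and argues uniformly; conversely, what you call the ``secondary technical point'' (justifying $\langle\omega\mid d_c^*(\phi\ast K)\rangle=0$ when $\phi\ast K$ is \emph{not} compactly supported) is where the paper actually expends its effort, inserting an explicit cut-off $\sigma_N$, using Lemma \ref{leibniz} for the commutator $[d_c^*,\sigma_N]$, and passing to the limit by dominated convergence via Lemma \ref{closed}.
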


\begin{proof}
Let $\phi\in \mc D(\he n, E_0^h)$. By definition, by Corollary \ref{closed bis} and by Lemma \ref{comm}, iv), 
$$
\Scal{d_c^* d_c \Delta_{\he{},h}^{-1}\omega}{\phi}: = \Scal{d_c \Delta_{\he{},h}^{-1}\omega}{d_c\phi} = \Scal{\Delta_{\he{},h}^{-1}\omega}{d^*_cd_c\phi} = \Scal{\omega}{\Delta_{\he{},h}^{-1} d^*_cd_c\phi}
=\Scal{\omega}{d_c^* \Delta_{\he{},h}^{-1}d_c\phi}.
$$
One can write $ \Delta_{\he{},h}^{-1}d_c\phi=\phi\ast K$ where $K$ is a kernel of type $1$ or $2$. 
Let us show that
\begin{equation}\label{closed eq:1}
\int \scal{\omega}{ d_c^*(\phi \ast K)}\, dx=0.
\end{equation}
By Lemma \ref{closed}, $\phi \ast K$ is smooth and bounded on $\he n$, as well as all its horizontal derivatives. If $N\in \mathbb N$, let $\sigma_N$ be a cut-off function supported
 in $B(e,N+1)$ and identically 1 on $B(e,N)$. By dominated convergence theorem
 $$
 \int \scal{\omega}{ d_c^*(\phi \ast K)}\, dx  = \lim_{N\to\infty}\int \scal{\omega}{ \sigma_Nd_c^*(\phi \ast K)}\, dx.
 $$
 On the other hand, by Lemma \ref{leibniz},
 \begin{equation*}\begin{split}
&\int \scal{\omega}{ \sigma_Nd_c^*(\psi \ast K)}\, dx = \int \scal{\omega}{d_c^*( \phi \ast K)}\, dx
+  \int \scal{\omega}{ [d_c^*, \sigma_N](\phi \ast K)}\, dx
\\&\hphantom{xxxx}
=\Scal{d_c\omega}{ \sigma_N(\phi \ast K)}
+  \int \scal{\omega}{ [d_c^*, \sigma_N](\phi \ast K)}\, dx
\\&\hphantom{xxxx}
=
 \int \scal{\omega}{ [d_c^*, \sigma_N](\phi \ast K)}\, dx
\longrightarrow 0\qquad\mbox{as $N\to\infty$,}
\end{split}\end{equation*}
again by dominated convergence theorem, since horizontal derivatives of any order of $\sigma_N$ vanish as
$N\to\infty$ and $\phi \ast K\in L^\infty (\he n)$ by Lemma \ref{closed}. We conclude that $\Scal{d_c^* d_c \Delta_{\he{},h}^{-1}\omega}{\phi}=0$ for all test forms, hence $d_c^* d_c \Delta_{\he{},h}^{-1}\omega=0$.
 
\end{proof}

\section{Gagliardo-Nirenberg inequalities}\label{GNI}

The following is the core estimate of the paper. It provides primitives for globally defined 
$d_c$-closed $L^1$ forms, under an extra assumption on the vanishing of averages. This assumption 
is necessary. Indeed, it is obviously satisfied for forms admitting a compactly supported primitive. 
The extension to $L^1$ primitives is not hard, see Lemma \ref{by parts}. The case of forms 
admitting an $L^q$ primitive for some $q>1$ is more subtle, we refer to \cite{Tripaldi}. 

The starting point is the collection of Gagliardo-Nirenberg inequalities proven in \cite{BFP}. 

\begin{theorem}[\cite{BFP}, Theorem 1.6]\label{bfp1}
Let $u$ be a compactly supported Rumin $(h-1)$-form on $\he{n}$. Assume that $d_c^*u=0$. Then
\begin{align}
\label{GNBFP1}
\|u\|_{L^{Q/(Q-1)}(\he n,E_0^{h-1})}&\le C \|d_c u\|_{L^{1}(\he n,E_0^{h})}\quad \text{if }h\not=n+1,2n+1\\
\|u\|_{L^{Q/(Q-2)}(\he n,E_0^{n})}&\le C \|d_c u\|_{L^{1}(\he n,E_0^{n+1})}\quad \text{if }h=n+1.
\end{align}
\end{theorem}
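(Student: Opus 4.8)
The plan is to represent the co-closed form $u$ as a homogeneous singular integral of the $d_c$-closed $L^1$ form $d_cu$, and then to promote the resulting weak-type bound to a genuine $L^q$ estimate by exploiting $d_c$-closedness, following the mechanism of Bourgain--Brezis and Chanillo--van Schaftingen. (One could instead run the Lanzani--Stein descent from a top degree, but the route through Rumin's Laplacian is closer to the rest of this paper.)

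\emph{Step 1 (representation).} Put $f:=d_cu\in L^1(\he n,E_0^h)$; since $d_c\circ d_c=0$, $f$ is $d_c$-closed. Because $d_c^*u=0$, the Rumin Laplacian in degree $h-1$ collapses on $u$ to a single term: $\Delta_{\he{},h-1}u=d_c^*d_cu=d_c^*f$, all the $d_cd_c^*$-pieces being killed by $d_c^*u=0$. The operator $\Delta_{\he{},h-1}$ is homogeneous of degree $a\in\{2,4\}$, and $a<Q$ outside the exceptional pair $n=1$, $h=n+1$ (which is treated via the logarithmic kernels of Theorem \ref{global solution}), so Theorem \ref{global solution} gives $u=\Delta_{\he{},h-1}^{-1}d_c^*f$. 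Commuting $d_c^*$ across $\Delta_{\he{}}^{-1}$ by Lemma \ref{comm} — item (iv), or items (v)--(vi) when a degree $n$ or $n+1$ is touched — one rewrites this as $u=Tf$, where by Proposition \ref{kernel} together with Theorem \ref{global solution} the operator $T$ is convolution with a matrix of kernels of type $\mu$; a count of the orders of the operators involved shows $\mu=1$ for every admissible $h\neq n+1$ (even when $\Delta_{\he{}}^{-1}$ is of type $4$, it gets composed with three first-order operators) and $\mu=2$ for $h=n+1$ (there $d_c^*$ out of $E_0^{n+1}$ is second order and $\Delta_{\he{},n+1}^{-1}$ of type $4$). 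When $h=1$ the hypothesis $d_c^*u=0$ is vacuous and $u=(-\Delta_{\he{},0})^{-1}(-\Delta_{\he{},0})u$ is the usual sub-Laplacian representation of a function, $\mu=1$. In all cases
\begin{equation*}
u=Tf,\qquad f=d_cu\ \text{$d_c$-closed},\ f\in L^1(\he n,E_0^h),\qquad T\ \text{of type }\mu,
\end{equation*}
with $\mu=1$ if $h\neq n+1$ and $\mu=2$ if $h=n+1$, and the target exponent is exactly $q=Q/(Q-\mu)$.

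\emph{Step 2 (weak-to-strong).} By Lemma \ref{convolutions} and Corollary \ref{marc alternative coroll}, $Tf$ already belongs to $M^{q}=L^{q,\infty}$ with $\|Tf\|_{L^{q,\infty}}\le C\|f\|_{1}$ for any $f\in L^1$; the issue is to reach the strong space $L^q$, which is impossible for a general datum. Here the $d_c$-closedness of $f$ is what makes it work: the operator $T$, restricted to the subspace of $d_c$-closed forms in $L^1(\he n,E_0^h)$, maps boundedly into $L^q(\he n,E_0^{h-1})$. This is the Heisenberg-group counterpart of the Bourgain--Brezis inequality, obtained for Rumin forms through the duality estimate emphasized by van Schaftingen (descending, à la Lanzani--Stein, from the highest admissible degree to lower ones) and established in the present context in \cite{BF7}, \cite{BFP2}. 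Applying it to $f=d_cu$ yields at once
\begin{equation*}
\|u\|_{L^{q}(\he n,E_0^{h-1})}=\|Tf\|_{L^{q}(\he n,E_0^{h-1})}\le C\|f\|_{1}=C\|d_cu\|_{L^{1}(\he n,E_0^{h})}.
\end{equation*}

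\emph{Main obstacle, and scope.} The analytic heart of the argument is the estimate invoked in Step 2: a kernel of type $\mu$ maps $L^1$ only into $L^{q,\infty}$, and the passage to the strong norm genuinely uses the compensation $d_cf=0$ — this is the Bourgain--Brezis phenomenon, and by Ornstein's non-inequality \cite{Ornstein} no such gain can be extracted from kernel size estimates, nor from controlling $\|d_cu\|_{1}+\|d_c^*u\|_{1}$ alone. The shift of exponent to $Q/(Q-2)$ when $h=n+1$ is already forced in Step 1, since there the Rumin Laplacian in degree $n$ is fourth-order and $d_c^*$ out of $E_0^{n+1}$ second-order, so $T$ gains two homogeneity degrees rather than one. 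Finally, the excluded degree $h=2n+1$ would concern primitives of top Rumin forms, for which the corresponding $L^1$-Poincar\'e inequality fails, so the restriction $h\le 2n$ is precisely the range needed for Theorem \ref{poincareglobal}.
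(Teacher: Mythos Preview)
This theorem carries no proof in the present paper: it is quoted verbatim from \cite{BFP}, Theorem~1.6, and is then used as a black box to establish Theorem~\ref{core}. So there is no in-paper argument to compare against.

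That said, your proposal has a genuine circularity problem. Your Step~2 asserts that the operator $T$ (which is $K_0$ in the paper's notation) maps the $d_c$-closed subspace of $L^1$ boundedly into $L^q$, and you cite \cite{BF7}, \cite{BFP2} for this. But within the logical architecture of this paper, that statement is precisely the content of Theorem~\ref{core} (noting that $f=d_cu$ automatically has vanishing averages by Lemma~\ref{by parts}), and Theorem~\ref{core} is proved \emph{using} Theorem~\ref{bfp1}: one applies \eqref{GNBFP1} to the co-closed cut-offs $v_{\eps,N}=J_\eps\ast d_c^*(\chi_N\Delta_{\he{}}^{-1}\omega)$ and lets $\eps\to0$, $N\to\infty$. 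So you are invoking as input a result whose proof here rests on the very theorem you are proving. The references \cite{BF7}, \cite{BFP2} supply the van~Schaftingen-type \emph{duality} estimates (i.e.\ bounds on $\int\langle f,\phi\rangle$ for $d_c$-closed $f\in L^1$ against suitable test forms $\phi$), not the strong $L^q$ bound on $Tf$ directly; bridging that gap is exactly the work done in \cite{BFP} to obtain Theorem~\ref{bfp1}. Your Step~2 thus outsources the entire analytic content.

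A minor point on Step~1: when $h-1=n+1$ (i.e.\ $h=n+2$), the Laplacian is $\Delta_{\he{},n+1}=d_cd_c^*+(d_c^*d_c)^2$, so under $d_c^*u=0$ it collapses to $(d_c^*d_c)^2u$, not to $d_c^*d_cu$ as you write. Your conclusion that $T$ has type~$1$ in this degree survives (one commutes $d_c^*d_c$ past $\Delta_{\he{},n+1}^{-1}$ and uses Lemma~\ref{comm}\,(v) to reach $u=d_c^*\Delta_{\he{},n+2}^{-1}f$), but the intermediate sentence is incorrect as stated.
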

In \cite{BFP}, Theorem 1.6, the first case corresponds to statements i), first line ($h=1$), ii) first line ($h=2$), iv) fourth line ($h=n+2$), iii) (other values of $h\not=2n+1$),
and the second case to statement iv) first line ($h=n+1$).

Given a $d_c$-closed $L^1$ form $\omega$, one would like to apply Theorem \ref{bfp1} to $u=d^*_c \Delta_{\he{},h}^{-1}\omega$, since $d_c^*u=0$. Since $u$ is not compactly supported, a cut-off is necessary, but this produces error terms which can be estimated thanks to Theorem \ref{anuli}, provided averages vanish.

\subsection{Estimate for $\he{n}$, $n>1$}

\begin{theorem}\label{core} Denote by $L^1_0(\he n,E_0^\bullet)$ the subspace of $L^1(\he n,E_0^\bullet)$
of forms with vanishing average. If $n>1$ we have:
\begin{itemize}
\item[i)] if $h\neq n, n+1$ and $1\leq h < 2n+1$, then
$$
\|d^*_c \Delta_{\he{},h}^{-1} \omega\|_{L^{Q/(Q-1)}(\he n, E_0^{h-1})}\le C \|\omega\|_{L^{1}(\he n, E_0^h)}\qquad\mbox{for all $\omega\in L^1_0(\he n,E_0^h)\cap \ker d_c$};
$$
\item[ii)] if $h=n$, then
$$
\|d^*_c d_cd^*_c \Delta_{\he{},n}^{-1} \omega\|_{L^{Q/(Q-1)}(\he n, E_0^{n-1})}\le C \|\omega\|_{L^{1}(\he n)}\qquad\mbox{for all 
$\omega\in L^1_0(\he n, E_0^n)\cap \ker d_c$};
$$
\item[iii)] if $h=n+1$, then
$$
\|d^*_c \Delta_{\he{},n+1}^{-1} \omega\|_{L^{Q/(Q-2)}(\he n, E_0^n)}\le C \|\omega\|_{L^{1}(\he n)}\qquad\mbox{for all $\omega\in L^1_0(\he n,E_0^{n+1})\cap \ker d_c$};
$$
\end{itemize}
If $n=1$, then statement iv) still holds with $h=3$. 
\end{theorem}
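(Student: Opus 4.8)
The plan is to realize the scheme of Section~\ref{scheme}: use the homogeneous kernel of $d_c^*\Delta_{\he{},h}^{-1}$, suitably modified in the degrees where $d_c$ is second order, to produce a primitive of $\omega$, cut it off so as to be able to invoke the Gagliardo--Nirenberg inequality of Theorem~\ref{bfp1}, and then let the cut-off escape to infinity, controlling the resulting error terms by Theorem~\ref{anuli}. The vanishing-average hypothesis enters only in this last step.

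First I would fix $\omega\in L^1_0(\he n,E_0^h)\cap\ker d_c$ and record the algebraic identities for the candidate primitive. Since $n>1$, Theorem~\ref{global solution} gives that $\Delta_{\he{},h}$ admits a fundamental solution that is a kernel of type $a\in\{2,4\}$ with $a<Q$; hence by Lemma~\ref{convolution Delta} and Remark~\ref{closed bis} the form $\Delta_{\he{},h}^{-1}\omega$ is a well-defined element of $L^1_{\mathrm{loc}}$, and pairing against test forms together with $\Delta_{\he{},h}^{-1}\Delta_{\he{},h}\psi=\psi$ on $\mc D$ yields $\Delta_{\he{},h}\Delta_{\he{},h}^{-1}\omega=\omega$ in $\mc D'$. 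By Lemma~\ref{commbis}, $d_c^*d_c\Delta_{\he{},h}^{-1}\omega=0$; substituting the explicit form of $\Delta_{\he{},h}$ then gives $d_cd_c^*\Delta_{\he{},h}^{-1}\omega=\omega$ when $h\neq n,n+1$, $(d_cd_c^*)^2\Delta_{\he{},n}^{-1}\omega=\omega$ when $h=n$, and $d_cd_c^*\Delta_{\he{},n+1}^{-1}\omega=\omega$ when $h=n+1$. In each case the corresponding $(h-1)$-form $u$ (namely $d_c^*\Delta_{\he{},h}^{-1}\omega$, resp. $d_c^*d_cd_c^*\Delta_{\he{},n}^{-1}\omega$, resp. $d_c^*\Delta_{\he{},n+1}^{-1}\omega$) satisfies $d_c^*u=0$ and $d_cu=\omega$, and by Proposition~\ref{kernel}~ii) and Lemma~\ref{convolutions} it is $\omega$ convolved with a kernel of type $1$ (cases i), ii)) or $2$ (case iii)); in particular $u\in L^1_{\mathrm{loc}}$. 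It remains to upgrade this to the sharp $L^q$ bound.

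Next comes the truncation. Choose $\chi\in\mc D(\he n)$ with $\chi\equiv1$ on $B(e,1)$ and $\supp\chi\subset B(e,2)$, and set $\chi_R:=\chi\circ\delta_{1/R}$, so that $\chi_R\equiv1$ on $B(e,R)$, $W^I\chi_R$ is supported on the shell $B(e,2R)\setminus B(e,R)$, and $|W^I\chi_R|\le C_I\,R^{-d(I)}$. Put $\phi_R:=d_c^*(\chi_R\Delta_{\he{},h}^{-1}\omega)$, replaced by $d_c^*d_cd_c^*(\chi_R\Delta_{\he{},n}^{-1}\omega)$ in case ii). Then $\phi_R$ is compactly supported, $d_c^*\phi_R=0$, $\phi_R\in L^1_{\mathrm{loc}}$, and, writing $\mc Q_h$ for $d_cd_c^*$ in cases i), iii) and for $(d_cd_c^*)^2$ in case ii),
\[
d_c\phi_R=\chi_R\,\omega+[\mc Q_h,\chi_R]\,\Delta_{\he{},h}^{-1}\omega .
\]
The operator $\mc Q_h$ has horizontal order $m=a\in\{2,4\}$, and by the Leibniz formulas of Lemma~\ref{leibniz}~iii), iv) (and their iteration, in case ii)) the commutator splits as $\sum_{j=0}^{m-1}P_j(W^{m-j}\chi_R)$ with $P_j$ of horizontal order $j$ and with coefficients bounded by $C\,R^{-(m-j)}$ on the shell. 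Applying a factor $W^I$ with $d(I)=j$ to $\Delta_{\he{},h}^{-1}\omega$ produces $\omega$ convolved with a kernel of type $a-j=m-j\in\{1,\dots,m\}\subset(0,Q)$ (here $n>1$ is used, so that $m\le4<Q$), so by Theorem~\ref{anuli} and the vanishing of the average of $\omega$,
\[
R^{-(m-j)}\int_{B(e,2R)\setminus B(e,R)}|W^I\Delta_{\he{},h}^{-1}\omega|\,dx\longrightarrow 0\qquad(R\to\infty).
\]
Hence $\|[\mc Q_h,\chi_R]\Delta_{\he{},h}^{-1}\omega\|_{L^1}\to0$ and $\|d_c\phi_R\|_{L^1}\le\|\omega\|_{L^1}+o(1)$. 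Mollifying $\phi_R$ to bring it within the scope of Theorem~\ref{bfp1} (whose first line applies when $h\neq n+1$, giving exponent $Q/(Q-1)$, and whose second line applies when $h=n+1$, giving $Q/(Q-2)$, since $d_c$ is second order in degree $n$) then yields $\|\phi_R\|_{L^q}\le C\|d_c\phi_R\|_{L^1}\le C(\|\omega\|_{L^1}+o(1))$; and since $\phi_R=u$ on $B(e,R)$, letting $R\to\infty$ and using Fatou gives $\|u\|_{L^q}\le C\|\omega\|_{L^1}$, which is i), ii) or iii).

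The step I expect to be the main obstacle is the commutator bookkeeping in degrees $h=n,n+1$, where $\mc Q_h$ has order $4$: one must verify that, after distributing the (at most $m$) derivatives in $[\mc Q_h,\chi_R]$ between the cut-off and $\Delta_{\he{},h}^{-1}\omega$, every resulting term is exactly of the form (a coefficient of size $R^{-(m-j)}$) times (a kernel of type $m-j$ applied to $\omega$) on the shell, so that the scaling matches the $R^{-\alpha}$ normalisation of Theorem~\ref{anuli} precisely; this is where the homogeneity of the operators $P_j$ in Lemma~\ref{leibniz} and of the derived kernels in Proposition~\ref{kernel}~ii) must be tracked with care. Two further technical points require attention: the distributional justification that $\phi_R\in L^1_{\mathrm{loc}}$ and that $d_c\phi_R$ has the stated form (both following from the kernel estimates of Lemmas~\ref{convolutions} and~\ref{convolution Delta} together with Lemma~\ref{leibniz}), and the mollification argument needed to apply Theorem~\ref{bfp1} to the merely locally $L^q$ form $\phi_R$. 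Finally, when $n=1$ the degrees $h=n,n+1$ fall outside this scheme, since then $a=Q$ and the kernel of $\Delta_{\he{},h}^{-1}$ is only logarithmic, so that Theorem~\ref{anuli} is unavailable; this is why in that case only statement iv), with $h=3$, persists, and it is obtained by the same argument once one observes that the relevant Laplacian there has order strictly less than $Q$.
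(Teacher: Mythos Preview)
Your proposal is correct and follows essentially the same route as the paper: cut off $\Delta_{\he{},h}^{-1}\omega$ (or $d_cd_c^*\Delta_{\he{},n}^{-1}\omega$ in degree $n$), mollify, apply the Gagliardo--Nirenberg inequality of Theorem~\ref{bfp1}, control the commutator $[\mc Q_h,\chi_R]\Delta_{\he{},h}^{-1}\omega$ on shells via Lemma~\ref{leibniz} and Theorem~\ref{anuli}, and conclude by Fatou. The only cosmetic difference is that in case~ii) the paper places the cut-off between the two factors of $d_cd_c^*$, writing $v_{\eps,N}=J_\eps\ast d_c^*(\chi_N d_cd_c^*\Delta_{\he{},n}^{-1}\omega)$, so that only the order-$2$ commutator $[d_cd_c^*,\chi_N]$ (applied to the type-$2$ quantity $d_cd_c^*\Delta_{\he{},n}^{-1}\omega$) appears, rather than the iterated order-$4$ commutator $[(d_cd_c^*)^2,\chi_R]$ you use; both choices lead to the same shell estimates.
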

If $n=1$ and $h=1,2$, similar (but slightly different) inequalities are discussed in Proposition
\ref{n=1}.

\begin{proof}
We notice that, if $n>1$, then $ \Delta_{\he{},h}^{-1}$ is a kernel of type 2 or 4 and therefore, as we pointed out in \eqref{L1-L1}, if $\omega \in L^1_0(\he n,E_0^h)$, then $\Delta_{\he{},h}^{-1}\omega$ is well defined
and belongs to $L^1_{\mathrm{loc}}(\he n, E_0^h)$ for $1\le h\le 2n+1$. Thus we can consider the
convolution operator $\omega\mapsto d_c^*\Delta_{\he{},h}^{-1}\omega$ that is associated with a kernel of type 1
if $h\neq n, n+1$ and of type 2 if $h=n+1$. Analogously, if $h=n$, then the
convolution operator $\omega\to d^*_c d_cd^*_c \Delta_{\he{},h}^{-1} \omega$ is associated with a kernel of type 1.

 If $N\in\mathbb N$, let now $\chi_N$ be a cut-off function supported in $B(e,2N)$, $\chi_N\equiv 1$ on $B(e,N)$. If $\eps<1$ let
$J_\epsilon$ be an usual
Friedrichs' mollifier (for the group structure). Then, set
\begin{equation}\label{v eps,N h neq n}
v_{\eps,N} :=J_\eps\ast  d^*_c (\chi_N \Delta_{\he{},h}^{-1} \omega)\qquad\mbox{if $h\neq n$,}
\end{equation}
while
\begin{equation}\label{v eps,N h = n}
v_{\eps,N} :=J_\eps\ast d^*_c (\chi_N d_c d^*_c  \Delta_{\he{},h}^{-1} \omega)\qquad\mbox{if $h=n$.}
\end{equation}
Notice now that both $d^*_c (\chi_N \Delta_{\he{},h}^{-1} \omega)$ if $h\neq n$, and
$d^*_c (\chi_N d_c d^*_c  \Delta_{\he{},h}^{-1} \omega) $ if $h= n$ are compactly supported and uniformly bounded in $L^1(\he n,E_0^{h-1})$.
Indeed, in the first case we can write
$$
d^*_c (\chi_n \Delta_{\he{},h}^{-1} \omega) =  \chi_N(d^*_c \Delta_{\he{},h}^{-1} \omega) + [d^*_c, \chi_N] \Delta_{\he{},h}^{-1} \omega,
$$
and both terms on the right hand side are bounded in $L^1(\he n,E_0^{h-1})$ by Lemmata \ref{leibniz}
and \ref{convolution Delta}. An analogous argument can be carried out in case $h=n$,
keeping in mind that $d^*_c$ is an operator of order 1 and $d_c d^*_c  \Delta_{\he{},h}^{-1}$ is associated with a kernel
of type 2.

We observe that
$$
d_c^*v_{\eps,N}=   J_\eps \ast (d^*_c)^2(\chi_N \Delta_{\he{},h}^{-1} \omega)=0,
$$
if $h\not=n$. In case $h=n$,
$$
d_c^*v_{\eps,N}=   J_\eps \ast (d^*_c)^2(\chi_N d_c d_c^*\Delta_{\he{},h}^{-1} \omega)=0
$$
again.

Let us prove sentences i) and iii) simultaneously. To avoid cumbersome notations, in the sequel
when  $L^p$-spaces are involved, we shall drop the target spaces. We apply Theorem \ref{bfp1} to $v_{\eps,N}$.
\begin{equation}\label{july 7 eq:1}\begin{split}
\|v_{\eps,N}&\|_{L^{Q/(Q-1)}(\he n, E_0^{h-1})} 
\le C  \| d_c v_{\eps,N}\|_{L^1(\he n)}
\\&
=
C \| J_\eps \ast d_c  d^*_c( \chi_N\Delta_{\he{},h}^{-1} \omega)\|_{L^1(\he n)} 
\\&
\le
C \big\{ \| J_\eps \ast [d_c d^*_c, \chi_N] (\Delta_{\he{},h}^{-1} \omega)\|_{L^1(\he n)} +
 \| J_\eps \ast  \chi_N (d_cd^*_c \Delta_{\he{},h}^{-1} \omega)\|_{L^1(\he n)}  \big\}
 \\&
\le
C \big\{ \| [d_cd^*_c, \chi_N] ( \Delta_{\he{},h}^{-1} \omega)\|_{L^1(\he n)} +
 \|  \chi_N (d_cd^*_c \Delta_{\he{},h}^{-1} \omega)\|_{L^1(\he n)}  \big\}.
  \end{split}\end{equation}
If $h=n+1$, then \eqref{july 7 eq:1} still holds
provided we replace $\|v_{\eps,N}\|_{L^{Q/(Q-1)}(\he n, E_0^{n})} $ with $\|v_{\eps,N}\|_{L^{Q/(Q-2)}(\he n, E_0^{n})} $.

By Lemma \ref{commbis}, 
\begin{equation}\label{jan 6 eq:2}
d_cd^*_c\Delta_{\he{},h}^{-1}\omega = \omega - d^*_c d_c \Delta_{\he{},h}^{-1}\omega = \omega.
\end{equation}

By Lemma \ref{leibniz}, $[d_cd^*_c, \chi_N]$ can be written as a sum of terms of the form $P_j^h(W^k)$ with $j=0,1$, $j+k=2$ if $h\neq n+1$,
and with $j=0,1,2,3$, $j+k=4$ if $h = n+1$. By Proposition \ref{kernel}, ii), in both cases the norm  $\| [d_cd^*_c, \chi_N] ( \Delta_{\he{},h}^{-1} \omega)\|_{L^1(\he n)}$ can be estimated
by a sum of terms of the form
$$
\frac{1}{N^k} \int_{B(e,2N)\setminus B(e,N)} \big|K\omega \big| \, dx,
$$
where $K$ is a kernel of type $k\ge 1$. Thus, we can apply Theorem \ref{anuli} to conclude that
$$
 \| [d_cd^*_c, \chi_N] ( \Delta_{\he{},h}^{-1} \omega)\|_{L^1(\he n)}\longrightarrow 0\qquad \mbox{as $N\to \infty$.}
$$
If $\eps\to 0$, then $v_{\eps,N} \to d^*_c  (\chi_N \Delta_{\he{},h}^{-1} \omega)$ in $L^1(\he n, E_0^{h-1})$ 
(and therefore we may assume a.e.). By Fatou's theorem, this provides an $L^{Q/(Q-1)}$ bound on $d^*_c  (\chi_N \Delta_{\he{},h}^{-1} \omega)$. Since 
$$
d^*_c (\chi_N \Delta_{\he{},h}^{-1} \omega) =  \chi_N(d^*_c \Delta_{\he{},h}^{-1} \omega) + [d^*_c, \chi_N] \Delta_{\he{},h}^{-1} \omega,
$$
as $N$ tends to $\infty$, this converges a.e. to $d^*_c \Delta_{\he{},h}^{-1} \omega$. By Fatou again, 
$$
\|d^*_c \Delta_{\he{},h}^{-1} \omega\|_{L^{Q/(Q-1)}(\he n)}\le C \|\omega\|_{L^{1}(\he n)}.
$$

This completes the proof of i) and iii). Finally, the proof of ii) can be carried out through the same argument,
provided we keep in mind
 Lemma \ref{comm}-i) in order to obtain that $d_cd^*_c d_c d^*_c \Delta_{\he{},h}^{-1}\omega = \omega$. 
\end{proof}

\subsection{The case of $\he1$}

When $n=1$ and $h=1$ or $2$, the statement and the proof are slightly different,
due to the fact that $\Delta_{\he{},h}^{-1}$ in degrees $h=1,2$ has a logarithmic behavior, since the
order of $\Delta_{\he{},h}$ equals the homogeneous dimension $Q=4$ (see Theorem \ref{global solution}, ii)). Incidentally, if $h=0$ or $h=3$ the order of $\Delta_{\he{},h}$ is $2<Q$ and there is no difference from the case $n>1$.

If $h=1,2$, the way to circumvent this obstacle is to avoid mentioning $\Delta_{\he{},h}^{-1}$ and focus on $d_c^*\Delta_{\he{},h}^{-1}$ which is still given by convolution with a kernel of type $2$ or $3$. Indeed,
suppose first $\omega\in L^1(\he 1,E_0^h)$, is compactly supported. Then, keeping in 
mind that $K\in L^1_{\mathrm{loc}}$, by  
Theorem \ref{global solution}, ii) again, we obtain that 
$\Delta_{\he{},h}^{-1} \omega \in L^1_{\mathrm{loc}}(\he 1,E_0^h)$ (therefore it is a 
distribution), and we can write
\begin{equation}\label{kappa tilde}
d_c^*\Delta_{\he{},h}^{-1} \omega=: \omega\ast \tilde K,
\end{equation}
where (keeping in mind Proposition \ref{kernel})
$\tilde K$ is a kernel of type $3$ if $h=1$ and of type $2$ if $h=2$. 

\begin{proposition}\label{n=1}    
Assume that $h=1$ or $2$ and $n=1$. Let $\tilde K$ the convolution operator 
associated with the kernel $\tilde K$. Let
$\omega\in L^1(\he 1,E_0^h)$ without any support assumption, then $\tilde K \omega\in L^1_{\mathrm{loc}}(\he 1,E_0^h)$, 
\begin{equation}\label{n=1 eq:1}
\|d^*_c d_c \tilde K \omega\|_{L^{Q/(Q-1)}(\he 1,E_0^0)}\le C \|\omega\|_{L^{1}(\he 1, E_0^1)}\qquad\mbox{for all $\omega\in L^1_0(\he 1, E_0^1)\cap \ker d_c$};
\end{equation}
and
\begin{equation}\label{n=1 eq:2}
\| \tilde K \omega\|_{L^{Q/(Q-2)}(\he 1, E_0^1)}\le C \|\omega\|_{L^{1}(\he n, E_0^2)}\qquad\mbox{for all $\omega\in L^1_0(\he 1, E_0^2)\cap \ker d_c$};
\end{equation}
\end{proposition}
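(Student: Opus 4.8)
The plan is to mimic the proof of Theorem \ref{core} as closely as possible, replacing the object $\Delta_{\he{},h}^{-1}\omega$ (which is ill-defined when $h=1,2$ because the order of $\Delta_{\he{},h}$ equals $Q=4$) with the operator $\omega\mapsto\omega\ast\tilde K$, where $\tilde K$ is the kernel of $d_c^*\Delta_{\he{},1}^{-1}$ (type $3$) resp. $d_c^*\Delta_{\he{},2}^{-1}$ (type $2$) introduced in \eqref{kappa tilde}. First I would check that $\tilde K\omega$ is well defined in $L^1_{\mathrm{loc}}$ for arbitrary $\omega\in L^1(\he 1,E_0^h)$: since $\tilde K$ is a kernel of type $2$ or $3$, both $\le Q-1$, Lemma \ref{convolutions} gives $\tilde K\omega\in M^{Q/(Q-2)}$ resp. $M^{Q/(Q-3)}$, hence by Corollary \ref{marc alternative coroll} it lies in $L^1_{\mathrm{loc}}(\he 1,E_0^h)$. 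So $\tilde K\omega$ is a legitimate distribution to differentiate.

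Next, for $\omega\in L^1_0(\he 1,E_0^h)\cap\ker d_c$ I would introduce the cut-offs $\chi_N$ (supported in $B(e,2N)$, $\equiv1$ on $B(e,N)$, with $|W^k\chi_N|\lesssim N^{-k}$) and a Friedrichs mollifier $J_\eps$, and set $v_{\eps,N}:=J_\eps\ast d_c^*\big(\chi_N\,\tilde K\omega\big)$ when $h=2$, and $v_{\eps,N}:=J_\eps\ast d_c^*\big(\chi_N\, d_c\,\tilde K\omega\big)$ when $h=1$ (the extra $d_c$ in degree $1$ plays the role of the extra $d_cd_c^*$ that appears in case $h=n$ of Theorem \ref{core}, so that $v_{\eps,N}$ is a primitive of the right thing). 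As in Theorem \ref{core}, $d_c^*v_{\eps,N}=0$ because $(d_c^*)^2=0$, and $v_{\eps,N}$ is compactly supported and uniformly (in $\eps$) bounded in $L^1$; one then applies the Gagliardo--Nirenberg inequality of Theorem \ref{bfp1}. The point is the identity $d_cd_c^*\,\tilde K\omega=\omega$ on $\ker d_c$ (for $h=2$), resp. $d_cd_c^*d_c d_c^*\,\tilde K$ combined with Lemma \ref{comm}-vi) (for $h=1$), which is exactly the content of Lemma \ref{commbis} transcribed to the operator $\tilde K=d_c^*\Delta_{\he{},h}^{-1}$; I would either cite Lemma \ref{commbis} directly or reprove the one-line distributional computation using Lemma \ref{closed ex bis}, Lemma \ref{comm} and Lemma \ref{leibniz}, noting that $\tilde K$ being of type $\le Q-1$ all the convolution identities remain valid. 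Then $d_cv_{\eps,N}$ splits into the commutator term $J_\eps\ast[d_cd_c^*,\chi_N](\tilde K\omega)$ plus $J_\eps\ast\chi_N\omega$; the commutator, by Lemma \ref{leibniz} iii)--iv), is a sum of terms $N^{-k}K'\omega$ with $K'$ a kernel of type $k\ge1$ supported in the shell $B(e,2N)\setminus B(e,N)$, which tends to $0$ in $L^1$ as $N\to\infty$ by Theorem \ref{anuli} (here is where the vanishing-average hypothesis $\omega\in L^1_0$ is used). Letting $\eps\to0$ and then $N\to\infty$, Fatou's lemma (applied twice, exactly as in Theorem \ref{core}, peeling off $\chi_N$ via $d_c^*(\chi_N\tilde K\omega)=\chi_N d_c^*\tilde K\omega+[d_c^*,\chi_N]\tilde K\omega$) yields \eqref{n=1 eq:1} and \eqref{n=1 eq:2}.

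The main obstacle is bookkeeping rather than a new idea: one must make sure that every step that in Theorem \ref{core} invoked ``$\Delta_{\he{},h}^{-1}$ is a kernel of type $2$ or $4$'' goes through with $\tilde K$ in its place, in particular that $\tilde K\omega$, a priori only in $L^1_{\mathrm{loc}}$, can legitimately be paired against test forms (Lemma \ref{closed ex bis} and Remark \ref{closed ex bis remark}), and that the commutator estimate really produces kernels of type $\ge1$ and not of type $0$ — this is automatic for $h\ne n$, but since here $n=1$ and $h=1$ is the critical degree $h=n$, one inherits the same ``$(d_cd_c^*)^2$'' subtlety as in case ii) of Theorem \ref{core} and must track the extra $d_c$ carefully. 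A secondary technical point is that in degree $h=1$, $\Delta_{\he{},1}$ being the critical-order operator, Theorem \ref{global solution} iv) only gives $\Delta_{\he{},1}^{-1}\Delta_{\he{},1}\alpha-\alpha=\beta_\alpha$ a constant; but this does not affect us because we never use $\Delta_{\he{},1}^{-1}$ in isolation — only $d_c^*\Delta_{\he{},1}^{-1}$, and $d_c^*$ kills the constant $\beta_\alpha$, so the homotopy identity survives. Once these verifications are in place, the proof is a verbatim adaptation of Theorem \ref{core}.
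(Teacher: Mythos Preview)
Your approach is essentially the paper's, and for $h=1$ it goes through as you describe. But for $h=2$ your definition of $v_{\eps,N}$ is wrong and the argument collapses.

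When $h=2$, the form $\tilde K\omega=d_c^*\Delta_{\he{},2}^{-1}\omega$ already has degree $1$ and already carries a $d_c^*$. If you set $v_{\eps,N}=J_\eps\ast d_c^*(\chi_N\tilde K\omega)$, then
\[
d_c^*(\chi_N\tilde K\omega)=\chi_N\, d_c^*\tilde K\omega+[d_c^*,\chi_N]\tilde K\omega,
\]
and since $d_c^*\tilde K\omega=0$ (this is $(d_c^*)^2=0$, which you yourself need and which the paper proves by approximation), the first term vanishes and the second is supported in the shell $B(e,2N)\setminus B(e,N)$. So $v_{\eps,N}\to 0$ as $N\to\infty$, not to $\tilde K\omega$; Fatou gives you nothing. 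The literal transcription of the cut-off scheme from Theorem~\ref{core} fails precisely because you cannot put the cut-off \emph{between} $d_c^*$ and $\Delta^{-1}$ --- the whole point of working with $\tilde K$ is that $\Delta_{\he{},2}^{-1}\omega$ is not a distribution on its own.

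The paper's fix is to drop the outer $d_c^*$ and take $v_{\eps,N}:=J_\eps\ast\chi_N(\tilde K\omega)$ directly. This has the right degree and converges to $\tilde K\omega$, but now $d_c^*v_{\eps,N}\ne 0$, so Theorem~\ref{bfp1} does not apply. Instead the paper invokes the sharper inequality \cite{BFP}, Theorem~1.3~iv),
\[
\|v\|_{L^{Q/(Q-2)}}\le C\big(\|d_c v\|_{L^1}+\|d_cd_c^* v\|_{L^1}\big),
\]
which does not require $d_c^*v=0$. Both right-hand terms are then controlled: one uses $d_c\tilde K\omega=\omega$ and $d_c^*\tilde K\omega=0$ (established first by approximating $\omega$ by compactly supported forms and using Theorem~\ref{global solution}~iv), where the constant $\beta_\alpha$ is killed by the vanishing-average hypothesis), so that $d_c v_{\eps,N}$ and $d_cd_c^* v_{\eps,N}$ reduce to $\chi_N\omega$ plus commutator terms $[d_c,\chi_N]\tilde K\omega$ and $[d_cd_c^*,\chi_N]\tilde K\omega$, handled by Theorem~\ref{anuli} exactly as you say.
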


\begin{proof}
Let us prove for instance \eqref{n=1 eq:2}. The proof is a mere reformulation of that of Theorem \ref{core}.

Take  $\omega\in L^1_0(\he 1, E_0^2)\cap \ker d_c$. First of all,
we want to show that 
\begin{equation}\label{dc dc* tilde K} 
d_c\tilde K\omega =\omega \qquad\mbox{and}\qquad d_c^*K\omega =0.
\end{equation}

To this end, we take a sequence $(\omega_N)_{N\in\mathbb N}$
of compactly supported forms converging to $\omega $ in $L^1(\he 1, E_0^2)$. It is easy to see that 
$$
d_c^* \Delta_2^{-1}\omega_N =\tilde K \omega_N \to \tilde K \omega\qquad\mbox{in $ L^1_{\mathrm{loc}}(\he 1,E_0^1)$ as $N\to\infty$,}
$$
and hence $\tilde K \omega_N \to \tilde K \omega$ in $ \mc D'(\he 2,E_0^1)$ together with all their derivatives. In particular
\begin{equation}\label{dc* tilde K} d_c^*\tilde K\omega = \lim_{N\to\infty} d_c^* d_c^* \Delta_2^{-1}\omega_N=0.
\end{equation}

By Lemma \ref{commbis}
(and keeping in mind Remark \ref{closed ex bis remark}) 
for all $\phi\in \mc D(\he 2,E_0^3)$, so that
\begin{equation*}\begin{split}
&\lim_{N\to\infty} \Scal{d_c^*d_c d_c^* d_c \Delta_2^{-1}\omega_N}{\phi} : = \lim_{N\to\infty} \Scal{ \Delta_2^{-1}\omega_N}{ d_c^* d_c d_c^*d_c\phi}
\\&\hphantom{xxxx}
= \lim_{N\to\infty} \Scal{ \omega_N}{ \Delta_2^{-1} d_c^* d_c d_c^*d_c\phi} = \lim_{N\to\infty}  \Scal{ \omega_N}{ d_c^* \Delta_2^{-1} d_c d_c^*d_c\phi}.
\end{split}\end{equation*}
On the other hand, since $d_c^* \Delta_2^{-1}$ is a kernel of type 2 and hence $d_c^* \Delta_2^{-1} d_c d_c^*d_c\phi
\in L^{\infty} (\he 2,E_0^3)$,
\begin{equation*}\begin{split}
\lim_{N\to\infty}  \Scal{ \omega_N}{ d_c^* \Delta_2^{-1} d_c d_c^*d_c\phi}
=  \Scal{ u}{ d_c^* \Delta_2^{-1} d_c d_c^*d_c\phi} = 0
\end{split}\end{equation*}
i.e.
\begin{equation}\label{june 19 eq:1}\begin{split}
\lim_{N\to\infty} \Scal{d_c^*d_c d_c^* d_c\Delta_{\he{},h}^{-1}\omega_N}{\phi} = 0.
\end{split}\end{equation}
Therefore, by Theorem \ref{global solution}, iv) there exists a left invariant form
$\beta = \beta(\phi)$ such that
\begin{equation*}\begin{split}
\Scal{d_c\tilde Ku}{\phi} & = 
\lim_{N\to\infty }\Scal{d_c\tilde K\omega_N}{\phi} = \lim_{N\to\infty }\Scal{d_cd_c^*\Delta_2^{-1}\omega_N}{\phi} 
\\&
= \lim_{N\to\infty }\Scal{(d_cd_c^* + (d_c^*d_c)^2)\Delta_2^{-1}\omega_N}{\phi} 
= \lim_{N\to\infty }\Scal{\Delta_2\Delta_2^{-1}\omega_N}{\phi} 
\\&
= \lim_{N\to\infty }\Scal{\omega_N}{\Delta_2^{-1}\Delta_2\phi} 
\qquad\mbox{(by Remark \ref{closed ex bis remark})}
\\&
= \lim_{N\to\infty }\Scal{\omega_N}{\phi + \beta} 
= \Scal{\omega}{\phi + \beta}= \Scal{\omega}{\phi},
\end{split}\end{equation*}
i.e. $d_c\tilde K\omega =\omega$. Thus, the proof can be completed arguing basically as in the proof of Theorem \ref{core}. More
precisely,  let $\chi_N$ be a cut-off function supported in $B(e,2N)$, $\chi_N\equiv 1$ on $B(e,N)$. If $J_\epsilon$ is an usual
Friedrich's mollifier for $\eps<1$, let us consider
$$
v_{\eps,N} :=J_\eps\ast \chi_N (\tilde K \omega)
$$
(notice the slight difference from \eqref{v eps,N h neq n}, due to the fact that we cannot split $\tilde K$ as $d_c^* \Delta_2^{-1}$).
As in Theorem \ref{core}, $v_{\eps,N} \in \mc D(\he 1, E_0^{1})$ and $v_{\eps,N} \to \chi_N (\tilde K u)$ in $L^1(\he 1, E_0^{1})$ 
(and therefore we may assume a.e.) as $\eps\to 0$.  If we apply the estimates of \cite{BFP}, Theorem 1.3 - iv)
and \eqref{dc dc* tilde K} above, we get (to avoid cumbersome notations, when  $L^p$-spaces are involved, 
we shall drop the target spaces):
\begin{equation*}\begin{split}
\|v_{\eps,N}&\|_{L^{Q/(Q-2)}(\he 1)} 
\le C \big\{ \| d_c v_{\eps,N}\|_{L^1(\he 1)} + \| d_c d^*_c v_{\eps,N}\|_{L^1(\he 1)}\big\}
\\&
=
C \big\{ \| J_\eps \ast d_c \chi_N (\tilde K \omega)\|_{L^1(\he 1)} + \| J_\eps \ast d_c d^*_c(\chi_N (\tilde K \omega))\|_{L^1(\he 1)}\big\}
\\&
\le C \big\{ \|  d_c \chi_N (\tilde K \omega)\|_{L^1(\he 1)} + \| d_c d^*_c(\chi_N (\tilde K \omega))\|_{L^1(\he 1)}\big\}
\\&
\le
C \big\{   \|   \chi_N \omega\|_{L^1(\he 1)}+     \|  [d_c, \chi_N] (\tilde K \omega)\|_{L^1(\he 1)} 
 \\&
 \hphantom{\le}+
  \|  [d_c d^*_c,\chi_N] (\tilde K \omega)\|_{L^1(\he 1)}\big\}
\\&  \le
C \big\{   \|   u\|_{L^1(\he 1)}+     \|  [d_c, \chi_N] (\tilde K \omega)\|_{L^1(\he 1)} 
+  \|  [d_c d^*_c,\chi_N] (\tilde K \omega)\|_{L^1(\he 1)}\big\}.
   \end{split}\end{equation*}
   Thus, by Fatou's lemma
   \begin{equation*}\begin{split}
      \| \chi_N & (\tilde K \omega)\|_{L^1(\he n)} 
      \le C \big\{   \|   \omega\|_{L^1(\he 1)}
      \\&
      +     \|  [d_c, \chi_N] (\tilde K \omega)\|_{L^1(\he 1)} 
+  \|  [d_c d^*_c,\chi_N] (\tilde K\omega)\|_{L^1(\he 1)}\big\}.
   \end{split}\end{equation*}
 
   Keep now in mind that  $ \tilde K\omega$ is a form of degree 1, so that  
   both $d_c$ and $d_cd_c^*$ are horizontal operators of order 2.
   By Lemma \ref{leibniz} the two terms containing the commutators can be bounded by terms
   of the form
  $$
  \frac {1}{N^2} \int_{N\le |p|\le 2N} \big| \tilde K\omega\big|\, dp
  $$ 
   or by a sum of terms of the form
  $$
  \frac {1}{N} \int_{N\le |p|\le 2N} \big|W_\ell \tilde K\omega\big|\, dp.
  $$
  Thus we can conclude obtaining \eqref{n=1 eq:1} again by Fatou's lemma and Theorem \ref{anuli}.

\end{proof}
Once Theorem \ref{core} and Proposition \ref{n=1} are proved, the proof of Theorem \ref{poincareglobal} is straightforward:

\subsection{Proof of Theorem \ref{poincareglobal}}

In the Heisenberg case, Theorem \ref{core} and Proposition \ref{n=1} provide $L^q$ primitives (with the announced values of $q$) for $d_c$-closed $L^1$ forms with vanishing averages, in all degrees but the top degree. The Euclidean case is even simpler. This proves Theorem \ref{poincareglobal}.

\section{Interior inequalities}\label{InIn}

Interior inequalities are proven in three steps. Applying cut-offs on forms and on kernels, one first constructs a homotopy $K$ which slightly increases differentiability. Then Rumin's homotopy is used to replace Rumin forms with usual differential forms. Finally, Iwaniec-Lutoborsky's Euclidean homotopy is applied.

\subsection{The function space $L^1\cap d_c^{-1}L^1$}

A homotopy is an operator $K$ such that $d_cK+Kd_c$ equals identity (up to a loss on domain). To make sense of such an identity, one must restrict to forms $\alpha$ which belong to $L^1$ and such that $d_c\alpha$ belongs to $L^1$ as well.

 \begin{lemma}\label{density}
Let $B$ be a ball in $\he n$. We set
$$
(L^1\cap d_c^{-1}L^1)(B,E_0^\bullet):=\{\alpha\in L^1(B,E_0^\bullet)\,;\,d_c\alpha\in L^1(B,E_0^{\bullet+1})\},
$$
endowed with the graph norm. Then $C^\infty(B,E_0^\bullet)$ is dense in $(L^1\cap d_c^{-1}L^1)(B,E_0^\bullet)$.

\end{lemma}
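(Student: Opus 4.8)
The plan is to prove density by a two–step regularization: first an anisotropic dilation that makes the form defined slightly beyond $\overline B$, then a mollification by group convolution. Since left translations are contactomorphisms that commute with $d_c$ and act isometrically on each $L^1(\cdot,E_0^\bullet)$, one may first reduce to the case of a gauge ball centred at the origin, $B=B(e,R)$.

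For the first step, fix $\alpha\in(L^1\cap d_c^{-1}L^1)(B,E_0^\bullet)$. For $0<\lambda<1$ the dilation $\delta_\lambda$ maps $B(e,R/\lambda)$ onto $B(e,R)$ and $\overline{B(e,R)}\subset B(e,R/\lambda)$. As $\delta_\lambda$ is a contactomorphism, $\alpha_\lambda:=\delta_\lambda^{\#}\alpha$ is a Rumin form on $B(e,R/\lambda)$, and since $d_c$ commutes with contactomorphisms (Section~\ref{heisenberg}) one has $d_c\alpha_\lambda=\delta_\lambda^{\#}(d_c\alpha)\in L^1(B(e,R/\lambda),E_0^{\bullet+1})$. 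In the left-invariant frame of $E_0^\bullet$, $\delta_\lambda^{\#}$ is the change of variables $p\mapsto\delta_\lambda p$ followed by multiplication of each component by a fixed power $\lambda^{w(I)}$ (the weight of the corresponding basis covector); hence, by the continuity of dilations on $L^1$ for finite exponents, $\alpha_\lambda\to\alpha$ and $d_c\alpha_\lambda\to d_c\alpha$ in $L^1(B,E_0^\bullet)$ as $\lambda\uparrow1$. It therefore suffices to approximate, in the graph norm on $B$, forms that belong to $L^1\cap d_c^{-1}L^1$ on a neighbourhood $U$ of $\overline B$.

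For such a form $\beta$, one takes a Friedrichs mollifier $J_\eps$ for the group structure, supported in $B(e,\eps)$, and sets $\beta_\eps:=J_\eps\ast\beta$ (component-wise convolution in the left-invariant frame). For $\eps$ small, $\beta_\eps$ is smooth on a neighbourhood of $\overline B$, hence $\beta_\eps\in C^\infty(B,E_0^\bullet)$; moreover $d_c\beta_\eps=J_\eps\ast d_c\beta$ because $d_c$ is a matrix of left-invariant operators and left convolution commutes with such operators. The standard mollifier estimates give $\beta_\eps\to\beta$ and $d_c\beta_\eps\to d_c\beta$ in $L^1$ on $B$. Applying this with $\beta=\alpha_\lambda$ and choosing $\eps=\eps(\lambda)\to0$ quickly enough as $\lambda\uparrow1$ yields smooth Rumin forms converging to $\alpha$ in the graph norm, which is the claim. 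The only delicate point is the first step: one must use that $\delta_\lambda^{\#}$ preserves the subbundle $E_0^\bullet$ and intertwines $d_c$ (the naturality of Rumin's complex under contactomorphisms, recalled in Section~\ref{heisenberg}), and that $\lambda\mapsto\alpha_\lambda$ is $L^1$-continuous at $\lambda=1$ --- the usual continuity of the dilation and scalar-multiplication action on $L^1$, where the finiteness of the exponent is exactly what makes $p=1$ admissible here.
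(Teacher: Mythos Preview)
Your proof is correct but takes a different route from the paper. The paper's argument first handles compactly supported forms by group mollification (using $d_c(J_\eps\ast u)=J_\eps\ast d_c u$), and then for general forms invokes the classical Meyers--Serrin technique: an exhaustion of $B$ by compact sets, a subordinate partition of unity, and mollification of each piece with progressively smaller radii. Your approach instead exploits that gauge balls are star-shaped with respect to the anisotropic dilations $\delta_\lambda$: pulling back by $\delta_\lambda$ for $\lambda<1$ produces a form defined on a genuine neighbourhood of $\overline B$, so a single global mollification suffices. This is more elementary and avoids the partition-of-unity bookkeeping, but it is specific to domains that are star-shaped for the group dilations (here, balls centred at $e$ after a left translation); the Meyers--Serrin route would work on arbitrary open sets. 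Both approaches rely on the same key algebraic fact, namely that $d_c$ commutes with left group convolution, and your use of the naturality of Rumin's complex under the contactomorphisms $\delta_\lambda$ (so that $d_c\delta_\lambda^{\#}=\delta_\lambda^{\#}d_c$ and $\delta_\lambda^{\#}$ acts on components by powers of $\lambda$) is the right substitute for the cut-off step.
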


\begin{proof} Take $u\in (L^1\cap d_c^{-1}L^1)(B,E_0^\bullet)$. If $u$ is compactly supported, then
it can be approximated by convolution with  Friedrichs' mollifiers $J_\eps$ for the structure of the group, since
$d_c(J_\eps\ast u) = J_\eps \ast d_cu$. The proof of the statement for non-compactly supported forms
can be carried out by mimicking verbatim the classical Meyers-Serrin's proof (see \cite{schwarz}, Theorem 1.3.3,
and \cite{FSSC_houston}).

\end{proof}

\begin{lemma}\label{L1 boundedness of Kd new}
Let $B$ be a ball in $\he n$. Set $K=d_c^*\Delta_{\mathbb H}^{-1}$  if $n >1$ and,
if $n=1$ is defined by \eqref{kappa tilde}. If
\begin{equation}\label{K0 formula}
K_0:= K\quad\text{in degree }h\neq n\text{ and}\qquad K_0:= d_c^*d_c K\quad\text{in degree }h=n.
\end{equation}
Then:
\begin{itemize}
\item $K_0$ is a kernel of type $1$ on forms of degree $h$, $h\neq n+1$ and of type 2 if $h=n+1$;

\item if $\chi$ is a smooth function with compact support in $B$, then the identity
$$
\chi=d_c K_0\chi+K_0d_c\chi
$$
holds on the space $(L^1\cap d_c^{-1}L^1)(B,E_0^\bullet)$.
\end{itemize}

\end{lemma}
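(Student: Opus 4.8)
The plan is to establish the two bulleted assertions in order, since the first is essentially bookkeeping and the second is the genuine homotopy identity.

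\textbf{Step 1: $K_0$ is a kernel of the claimed type.} First I would recall that $\Delta_{\mathbb H,h}^{-1}$ is associated with a kernel of type $a$, where $a=2$ if $h\neq n,n+1$ and $a=4$ if $h=n,n+1$ (Theorem \ref{global solution}; when $n=1$ and $h=1,2$ one uses $\tilde K$ from \eqref{kappa tilde} instead, which is a kernel of type $3$ or $2$). Now $d_c^*$ is a first-order horizontal operator in degrees $\neq n+1$ and second-order in degree $n+1$, so by Proposition \ref{kernel} ii), $K=d_c^*\Delta_{\mathbb H}^{-1}$ is a kernel of type $1$ in degrees $h\neq n,n+1$, of type $4-2=2$ in degree $h=n+1$, and of type $4-1=3$ in degree $h=n$ (since only one horizontal derivative is applied there, $d_c^*$ being first order on $E_0^n\to E_0^{n-1}$). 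In degree $h=n$ we set $K_0=d_c^*d_c K$; since $d_c$ on $E_0^{n-1}$ is first order and $d_c^*$ on $E_0^n$ is first order, $K_0$ loses two more homogeneity degrees, giving type $3-2=1$. In the remaining degrees $K_0=K$ by definition, so the first bullet follows. The $n=1$, $h=1,2$ cases are checked directly from the stated types of $\tilde K$.

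\textbf{Step 2: the homotopy identity.} Recall that on smooth compactly supported forms one has $d_c K+Kd_c=1$, coming from $d_c\Delta_{\mathbb H}^{-1}=\Delta_{\mathbb H}^{-1}d_c$ (Lemma \ref{comm}) together with $d_c^*d_c+d_cd_c^*=\Delta_{\mathbb H}$ in degrees $\neq n,n+1$ — and the modification $K_0=d_c^*d_cK$ in degree $n$ is precisely designed so that $d_c K_0 + K_0 d_c = 1$ holds there too, using Lemma \ref{comm} i), ii), vi). So for $\chi$ smooth with compact support in $B$ and $u$ smooth, $\chi u = d_c(K_0(\chi u)) + K_0(d_c(\chi u)) = d_c K_0(\chi u) + K_0(\chi\, d_c u) + K_0(d_c\chi\wedge\text{-terms})$. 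The cleaner way to phrase the target identity is: apply $d_c K_0 + K_0 d_c = 1$ to the form $\chi\beta$ (where $\beta$ is the given form), expand $d_c(\chi\beta)=\chi d_c\beta + [d_c,\chi]\beta$ and $K_0(\chi\beta)$, and reorganize using the Leibniz structure of $[d_c,\chi]$ (Lemma \ref{leibniz}) into the asserted form $\chi = d_c K_0\chi + K_0 d_c\chi$ as an operator identity on $(L^1\cap d_c^{-1}L^1)(B,E_0^\bullet)$. Here $d_c\chi$, $K_0\chi$ denote the operators of post-composition with these zeroth/first-order pieces.

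\textbf{Step 3: passage to $(L^1\cap d_c^{-1}L^1)(B,E_0^\bullet)$ by density.} The identity is first proved on smooth forms as above. Then I would invoke Lemma \ref{density} to approximate an arbitrary $u\in(L^1\cap d_c^{-1}L^1)(B,E_0^\bullet)$ by smooth $u_j$ with $u_j\to u$ and $d_c u_j\to d_c u$ in $L^1(B)$. Since $\chi$ has compact support in $B$, $\chi u_j$ and $\chi d_c u_j = d_c(\chi u_j)-[d_c,\chi]u_j$ are compactly supported and converge in $L^1$; convolution against the truncated kernels $K_0$ (type $1$, resp.\ type $2$ in degree $n+1$) is continuous from compactly supported $L^1$ forms into $L^1_{\mathrm{loc}}$ by Lemma \ref{convolutions} and Corollary \ref{marc alternative coroll}, so $K_0(\chi u_j)$ and $K_0(\chi d_c u_j)$ converge in $L^1_{\mathrm{loc}}(B)$, hence in $\mathcal D'$; applying $d_c$ is continuous on $\mathcal D'$. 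Passing to the limit in $\chi u_j = d_c K_0(\chi u_j) + K_0(\chi d_c u_j) + (\text{commutator terms})$ yields the identity for $u$.

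\textbf{Expected main obstacle.} The bookkeeping in Step 2 is where care is needed: one must correctly track the degree-$n$ and degree-$n+1$ exceptions, verify that $K_0=d_c^*d_c K$ in degree $n$ really restores $d_c K_0 + K_0 d_c = 1$ (this uses the specific modifications in Lemma \ref{comm} ii), vi) and the fact that $\Delta_{\mathbb H,n}=(d_cd_c^*)^2+d_c^*d_c$), and make sure the Leibniz commutator terms from Lemma \ref{leibniz} — which in degree $n+1$ involve up to four derivatives of $\chi$ — recombine exactly into $d_c K_0\chi + K_0 d_c\chi$ with no leftover. The analytic Step 3 is routine given the earlier continuity lemmas.
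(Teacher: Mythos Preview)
Your overall strategy matches the paper's, but two points need correction.

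First, in Step 2 you misread the statement. The notation $\chi = d_c K_0\chi + K_0 d_c\chi$ is an operator identity in which $\chi$ stands for multiplication by $\chi$; it asserts precisely that $\chi u = d_c(K_0(\chi u)) + K_0(d_c(\chi u))$ for every $u$ in the space. Once you have verified $d_c K_0 + K_0 d_c = 1$ on smooth compactly supported forms (by the degree-by-degree check using Lemma \ref{comm} and Theorem \ref{global solution}, as the paper does), the asserted identity follows immediately by applying both sides to $\chi u$. No Leibniz reorganization is needed, and your ``expected obstacle'' about commutator terms ``recombining with no leftover'' is a phantom.

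Second, Step 3 has a genuine gap in degree $h=n$. You assert that the commutator contribution $K_0([d_c,\chi]u_j)$ converges because everything converges in $L^1$, but in degree $n$ the commutator $[d_c,\chi]$ is a \emph{first-order} horizontal operator (Lemma \ref{leibniz} ii)), so $[d_c,\chi]u_j$ involves horizontal derivatives of $u_j$ and does not obviously converge in $L^1$. The paper resolves this by writing $[d_c,\chi]u_N$ as a sum of order-zero terms $(W_iW_j\chi)u_N$, which converge in $L^1$, and order-one terms of the form $W_j\{(W_i\chi)(u_N)_\ell\}$. For the latter one uses that $K_0$ acts here on $(n{+}1)$-forms and is therefore a kernel of type $2$, so $K_0 W_j$ is a kernel of type $1$ and still maps compactly supported $L^1$ into $L^1_{\mathrm{loc}}$; since $(W_i\chi)(u_N)_\ell\to(W_i\chi)u_\ell$ in $L^1$, convergence follows. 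This absorption of the stray derivative into $K_0$ is the analytic crux of the density argument and is absent from your outline.
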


\begin{proof}
If  $h\neq n-1, n, n+1$ and $\mathcal{D}(\he n,E_0^\bullet)$, then, by Theorem \ref{global solution}
and Lemma \ref{comm}, i),
\begin{equation*}\begin{split}
u & = d_c d_c^* \Delta_{\mathbb H}^{-1}u + d_c^* d_c \Delta_{\mathbb H}^{-1}u = d_c d_c^* \Delta_{\mathbb H}^{-1}u + d_c^*  \Delta_{\mathbb H}^{-1}d_cu
\\&
= d_c Ku + Kd_c u.
\end{split}\end{equation*}
If $h=n-1$, then 
\begin{equation*}\begin{split}
u & = d_c d_c^* \Delta_{\mathbb H}^{-1}u + d_c^* d_c \Delta_{\mathbb H}^{-1}u = d_c d_c^* \Delta_{\mathbb H}^{-1}u + d_c^*d_c d_c^*  \Delta_{\mathbb H}^{-1}d_cu
\\&
= d_c Ku + d_c^*d_c Kd_c u.
\end{split}\end{equation*}
If $h=n$, then 
\begin{equation*}\begin{split}
u & = (d_c d_c^*)^2 \Delta_{\mathbb H}^{-1}u + d_c^* d_c \Delta_{\mathbb H}^{-1}u = (d_c d_c^*)^2 \Delta_{\mathbb H}^{-1}u + d_c^*  \Delta_{\mathbb H}^{-1}d_cu
\\&
= d_c d_c^* d_c Ku + d_c^* Kd_c u.
\end{split}\end{equation*}
Finally, if $h=n+1$, then 
\begin{equation*}\begin{split}
u & = d_c d_c^* \Delta_{\mathbb H}^{-1}u + d_c^* d_c d_c^* d_c \Delta_{\mathbb H}^{-1}u = 
d_c d_c^* \Delta_{\mathbb H}^{-1}u + d_c^*  \Delta_{\mathbb H}^{-1}d_cu
\\&
= d_c  Ku + Kd_c u.
\end{split}\end{equation*}
In other words, with notations of \eqref{K0 formula}, for any $h$ we can write
$$
u = d_cK_0u + K_0d_c u,
$$
where $K_0$ is a kernel of type $1$ when it acts on forms of degree $h$, $h\neq n+1$ and of type 2 if $h=n+1$,

Take now $u\in (L^1\cap d_c^{-1}L^1)(B,E_0^h)$, $0\le h\le 2n+1$. By Lemma \ref{density} the exists a sequence $(u_N)_{N\in\N}$
of smooth $h$-forms on $B$ such that
$$
u_N \longrightarrow u  \qquad \mbox{in $L^1(\he n,E_0^h)$ as $N\to\infty$.}
$$
and
$$
d_cu_N  \longrightarrow d_cu\qquad \mbox{in $L^1(\he n,E_0^{h+1})$ as $N\to\infty$.}
$$
Obviously
$$
\chi u_N = d_cK_0 (\chi u_N) + K_0d_c(\chi u_N)\qquad\mbox{for all $N\in\N$.}
$$
Since $\chi u_N \to \chi u$ in $L^1(\he n,E_0^h)$ as $N\to \infty$, then
$K_0(\chi u_N) \to K_0(\chi u)$ in $L^1(B,E_0^{h-1})$, and $d_c K_0(\chi u_N) \to d_c K_0(\chi u)$ in the sense of distributions.
Let us consider now $K_0d_c(\chi u_N)= K_0(\chi d_cu_N + [d_c,\chi] u)$. Obviously, $\chi d_cu_N \to \chi d_cu$
in $L^1(\he n,E_0^{h+1})$ as $N\to \infty$, and then
$K_0(\chi d_cu_N) \to K_0(\chi d_cu)$ in $L^1(B,E_0^h)$.

Let us consider the term $K_0 [d_c,\chi] u_N$. If $h\neq n$, then, by Lemma \ref{leibniz},  $[d_c,\chi]u_N \to [d_c,\chi]u$
in $L^1(\he n,E_0^h)$ and we can conclude as above. Thus we are left with the case $h=n$. By Lemma \ref{leibniz}, 
$[d_c,\chi]u$ can be written as a sum of terms of the form $(W_iW_j \chi) u_N$ and of the form $W_j \{(W_i \chi)(u_N)_\ell\}$,
where $(u_N)_\ell$ is the $\ell$-th component of $u_N$. The terms of the form $(W_iW_j \chi) u_N$ can be handled as above.
On the other hand, $K_0W_j$ is a kernel of type 1, and, again, $(W_i \chi)(u_N)_\ell  \to (W_i \chi)u_\ell$ in $L^1(\he n)$
and we can conclude as above.
\end{proof}

\subsection{A local smoothing homotopy}

It is obtained by cutting off the global inverse of $d_c$ provided in Section \ref{GNI}. This operator can be applied only to global forms whose averages vanish. Therefore we begin by checking that averages vanish for $d_c$-exact forms.

  \begin{lemma}[see \cite{BFTT}, Remark 2.16]\label{by parts}
Let
  $\psi\in  L^1(\he n,E_0^{h})$ be a compactly supported form with $d_c\psi \in  L^1(\he n,E_0^{h+1})$,
  and let $\xi\in \bigwedge^{2n-h}$ be a left-invariant invariant form. Then
  $$
  \int_{\he{n} }  d_c\psi\wedge \xi = 0.
  $$
\end{lemma}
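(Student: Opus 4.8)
The plan is to reduce to smooth $\psi$ by mollification, transpose $d_c$ onto $\xi$ using the integration-by-parts (Stokes) formula for the Rumin complex, and conclude from the fact that every left-invariant Rumin form is $d_c$-closed. First, since $\psi$ is compactly supported, $J_\eps\ast\psi\in\mc D(\he n,E_0^h)$, $J_\eps\ast\psi\to\psi$ in $L^1$, and $d_c(J_\eps\ast\psi)=J_\eps\ast d_c\psi\to d_c\psi$ in $L^1$ (here $J_\eps$ is a Friedrichs mollifier for the group law and $d_c$ is left-invariant); since $\xi$ is bounded, $\int d_c(J_\eps\ast\psi)\wedge\xi\to\int d_c\psi\wedge\xi$. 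Thus we may assume $\psi\in\mc D(\he n,E_0^h)$ --- this is the mollification half of Lemma \ref{density}.

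The crucial observation is that $d_c$ annihilates left-invariant Rumin forms. Indeed $d_c$ is left-invariant, so it restricts to a differential on the finite-dimensional complex of left-invariant sections of $E_0^\bullet$; via the left-invariant operators $\Pi_E$, $\Pi_{E_0}$ and the algebraic $A$, this complex is homotopy equivalent to the complex of left-invariant de Rham forms, i.e. the Chevalley--Eilenberg complex of $\mfrak h$, whose cohomology satisfies $\dim H^k(\mfrak h)=\dim E_0^k$ in each degree. A finite-dimensional cochain complex whose cohomology has, in every degree, the dimension of the underlying space must have vanishing differential; hence $d_c\zeta=0$ for every left-invariant section $\zeta$ of $E_0^\bullet$. (Alternatively one checks this directly from Rumin's formulas: a left-invariant section of $E_0^k$ is closed and horizontal if $k\le n$, and has horizontal de Rham differential if $k\ge n+1$, so in either case the $E_0^{k+1}$-component of its differential vanishes.)

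To conclude, set $\zeta:=\Pi_{E_0}\xi$, a left-invariant section of $E_0^{2n-h}$ ($\Pi_{E_0}$ being algebraic and left-invariant). Since $d_c\psi$ is a section of $E_0^{h+1}$, the pointwise exterior product $d_c\psi\wedge\xi$ coincides with $d_c\psi\wedge\zeta$: the complementary part $\xi-\zeta$ lies in the complement of $E_0^{2n-h}$, which is exterior-orthogonal to $E_0^{h+1}$ by the self-duality built into Rumin's subbundles. The Stokes formula for the Rumin complex then yields $\int_{\he n}d_c\psi\wedge\zeta=(-1)^{h+1}\int_{\he n}\psi\wedge d_c\zeta$, with no boundary contribution since $\psi$ is compactly supported, and $d_c\zeta=0$ by the previous paragraph; hence $\int_{\he n}d_c\psi\wedge\xi=0$. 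The delicate points --- that the exterior pairing of $d_c\psi$ with $\xi$ sees only $\Pi_{E_0}\xi$, and that this integration by parts survives across the middle degree $h=n$, where $d_c$ is second order --- rest on the duality built into the construction of $E_0^\bullet$ and on the explicit form of $d_c$; I would settle them by appealing to \cite{rumin_jdg} and \cite{BFTT}.
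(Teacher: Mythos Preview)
Your proof is correct and follows the same route as the paper: reduce $\xi$ to its Rumin component via $\Pi_{E_0}$ (the paper cites identity (16) of \cite{BFTT} for $d_c\psi\wedge\xi=d_c\psi\wedge\Pi_{E_0}\xi$), reduce $\psi$ to a test form by density, and then invoke the Rumin integration-by-parts formula together with $d_c(\Pi_{E_0}\xi)=0$ (the paper packages these last two facts as ``Remark 2.16 in \cite{BFTT}''). Your dimension-count justification of $d_c\zeta=0$ for left-invariant Rumin forms---via $\dim E_0^k=\dim H^k(\mfrak h)$ forcing the differential on the finite-dimensional left-invariant complex to vanish---is a clean unpacking that the paper leaves to the reference.
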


\begin{proof} By \cite{BFTT}, identity (16), we have
  \begin{equation*}
d_c\psi\wedge\xi = d_c\psi\wedge (\Pi_{E_0}\xi),
 \end{equation*}
 so that we can assume that $\xi\in E_0^{2n-h}$ (and $\xi$ is still a
 ``constant coefficient form''). Moreover, by Lemma \ref{density},
 we can assume that $\psi\in  \mc D(\he n,E_0^{h})$. Thus we can conclude
 by Remark 2.16 in \cite{BFTT}.
\end{proof}

\begin{proposition}\label{smoothing}
Let $B\Subset B'$ be concentric balls in $\he n$. For $h=1,\ldots,2n$, let $q=Q/(Q-1)$ if $h\not=n+1$ and $q=Q/(Q-2)$ if $h=n+1$. For every $s\in\N$, there exists a smoothing operator $S:L^1(B',E_0^h)\to W^{s,q}(B,E_0^{h-1})$ and a bounded operator $T:L^1(B',E_0^h)\cap d_c^{-1}(L^1(B',E_0^{h+1}))$ to $L^q(B,E_0^{h-1})$, and such that, for $L^1$-forms $\alpha$ on $B'$ such that $d_c\alpha\in L^1$, 
$$
\alpha=d_cT\alpha+T d_c \alpha+S\alpha\qquad \text{on }B.
$$
In particular, $d_c S=Sd_c$ on $L^1 \cap d_c^{-1}L^1$. Furthermore, there exist $r>0$ and $p>1$ such that for all $s\geq 0$, $T$ extends to a bounded operator $L^1(B',E_0^h)\to W^{r,p}(B,E_0^{h-1})$ and $W^{s,p}(B',E_0^h)\to W^{s+1,p}(B,E_0^{h-1})$. In degree $n+1$, if $W$ is a horizontal derivative, $W T$ extends to a bounded operator $L^1(B',E_0^n)\to W^{r,p}(B,E_0^{n})$ and $W^{s,p}(B',E_0^n)\to W^{s+1,p}(B,E_0^{n})$.

Finally, $T$ and $S$ merely enlarge by a small amount the support of differential forms.
\end{proposition}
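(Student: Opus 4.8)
The plan is to build $T$ and $S$ by the three–step recipe outlined in the introduction to this section, with $T$ a localization of the global primitive operator $K_0=d_c^*\Delta_{\mathbb H}^{-1}$ (modified in degree $n$ as in \eqref{K0 formula}), truncated by a cut-off in a neighborhood of the origin. Fix a cut-off $\psi\in\mc D(\he n)$, $\psi\equiv 1$ near the origin, and fix $\chi_0\in\mc D(\he n)$ with $\chi_0\equiv 1$ on a neighborhood of $\overline B$ and $\supp\chi_0\subset B'$. The first step is to set $T_0\alpha:=(\psi K_0)*(\chi_0\alpha)$ and to use Lemma \ref{L1 boundedness of Kd new}: since $\chi_0\equiv 1$ on $B$, on $B$ one has $\chi_0\alpha=d_cK_0(\chi_0\alpha)+K_0 d_c(\chi_0\alpha)$, and writing $K_0=\psi K_0+(1-\psi)K_0$ we obtain
\begin{equation}\label{eq:smoothing-1}
\alpha=d_cT_0\alpha+T_0 d_c\alpha+S_0\alpha\qquad\text{on }B,
\end{equation}
where $S_0\alpha:=\bigl(d_c\,(1-\psi)K_0+(1-\psi)K_0\,d_c\bigr)*(\chi_0\alpha)+(\text{commutator terms from }[d_c,\chi_0])$; because $(1-\psi)K_0$ is smooth near the origin and $[d_c,\chi_0]$ has coefficients supported away from $B$, the operator $S_0$ has a kernel which is smooth on a neighborhood of the origin, so by Lemma \ref{closed} $S_0$ maps $L^1(B',E_0^h)$ continuously into $C^\infty(B,E_0^{h-1})$, in particular into every $W^{s,q}(B,E_0^{h-1})$.

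The second step is to upgrade integrability. By Lemma \ref{berthollet 1}, $T_0$ maps $L^1(B',E_0^h)$ into $W^{\alpha',p}(B,E_0^{h-1})$ for suitable $\alpha'\in(0,1)$ and $p>1$ (and into $W^{\alpha'+1,p}$ in degree $n+1$, since there $K_0$ has type $2$; for the last clause of the statement apply Lemma \ref{berthollet 1} to $WK_0$, a kernel of type $1$ — this is where the extra horizontal derivative in degree $n+1$ comes from), and by Theorem \ref{berthollet 3} $T_0$ maps $W^{s,p}(B',E_0^h)\to W^{s+1,p}(B,E_0^{h-1})$ for all $s\ge 0$. This already gives the $W^{r,p}$ and $W^{s,p}\to W^{s+1,p}$ bounds claimed for $T$ with $r=\alpha'$. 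To reach the target exponent $q$, note that $d_cT_0\alpha+T_0d_c\alpha=\alpha-S_0\alpha$ on $B$, so $\beta:=\chi(\alpha-S_0\alpha-d_cT_0\alpha-T_0d_c\alpha)$, for a further cut-off $\chi\equiv 1$ on a slightly smaller ball, is a $d_c$-closed form which vanishes; more usefully, $\alpha-S_0\alpha$ differs from $d_c(T_0\alpha)$ by a $d_c$-closed $L^1$ form, and one feeds the localized primitive $\chi T_0\alpha$ back through the global Theorem \ref{poincareglobal}(2) — equivalently Theorem \ref{core}/Proposition \ref{n=1} — applied to the $d_c$-closed $L^1_0$ form $d_c(\chi T_0\alpha)$ (its averages vanish by Lemma \ref{by parts}), which produces an $L^q$ correction; absorbing this correction and the smoothed leftover (which lies in $W^{s,1}$, hence in $L^q$ by the Sobolev embedding $W^{1,1}\hookrightarrow L^{Q/(Q-1)}$ iterated as needed) into $T$ and $S$ respectively yields the full statement. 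The construction manifestly only enlarges supports by the diameters of $\supp\psi$, $\supp\chi_0$, $\supp\chi$, which can be taken arbitrarily small, and $d_cS=Sd_c$ on $L^1\cap d_c^{-1}L^1$ follows by applying $d_c$ to \eqref{eq:smoothing-1} and using $d_c^2=0$.

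I expect the main obstacle to be the bookkeeping in degree $h=n$ and $h=n+1$, where $d_c$ is second order and the commutation relations of Lemma \ref{comm} are only approximate: one must track that $K_0$ there is genuinely a kernel of type $1$ (resp. $2$), verify via Lemma \ref{L1 boundedness of Kd new} that the homotopy identity still closes up after inserting the extra $d_c^*d_c$ factors, and, for the very last clause, check that $WT_0$ in degree $n+1$ really improves $W^{s,p}$-regularity by one derivative and $W^{j-1,1}\to W^{j,1}$ — this last $L^1\to L^1$ gain is exactly the delicate point, handled by Remark \ref{truncation} together with the $L^1$ boundedness of truncated type-$1$ kernels (Lemma \ref{convolutions} and Remark \ref{truncation} for the $W^{j,1}$ claim reduces, after differentiating, to convolution by a truncated type-$1$ kernel composed with one horizontal derivative landing on the cut-off). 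Everything else is a routine assembly of the cited continuity statements.
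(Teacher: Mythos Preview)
Your plan is correct and follows essentially the same route as the paper: truncate the kernel of $K_0$ by a cut-off $\psi$ near the origin, multiply $\alpha$ by a spatial cut-off, invoke the homotopy identity of Lemma \ref{L1 boundedness of Kd new} to get a first $L^1$-primitive $T_0\alpha$ plus a smoothing remainder, and then upgrade $T_0\alpha$ to $L^q$ by applying the global estimate (Theorem \ref{core}/Proposition \ref{n=1}) to the compactly supported $d_c$-closed form $d_c(\chi T_0\alpha)$, whose averages vanish by Lemma \ref{by parts}; the fractional Sobolev bounds come exactly from Lemma \ref{berthollet 1} and Theorem \ref{berthollet 3}, and your handling of $WT$ in degree $n+1$ via $WK_0$ being of type $1$ matches the paper's computation.

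Two small points of bookkeeping to tighten. First, the commutator contribution $(\psi K_0)\ast([d_c,\chi_0]\alpha)$ is not smoothing in itself; rather, it \emph{vanishes} on $B$ once the support of $\psi$ is taken small enough relative to $\mathrm{dist}(B,\supp d\chi_0)$ --- the paper makes this explicit by choosing the truncation radius $R$ accordingly. Second, the $W^{j-1,1}\to W^{j,1}$ clause you mention at the end is a statement from Theorem \ref{bddgeometryintro}, not part of Proposition \ref{smoothing}, so it need not (and does not) enter here. Your identification of the degree $n,n+1$ commutator bookkeeping as the main obstacle is exactly right: in degree $n+1$ one must check that $[d_c,\zeta]T_0\alpha\in L^1$ despite $[d_c,\zeta]$ being first order, and this works precisely because $K_0$ has type $2$ there.
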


 \begin{proof}
{  Le us fix two balls $B_0$, $B_1$ with 
\begin{equation}\label{varie B}
B\Subset B_0 \Subset B_1\Subset B',
\end{equation}
and a cut-off function $\chi \in \mc D(B_1)$,
 $\chi\equiv 1$ on  $B_0$. If $\alpha \in ( L^1 \cap d_c^{-1})(B', E_0^\bullet)$, we set $\alpha_0=  \chi\alpha$, continued by zero outside $B_1$.
 Denote by $k_0$  the kernel associated with $K_0$  in Lemma \ref{L1 boundedness of Kd new}. 
 We consider a cut-off function $\psi_R$ supported in a $R$-neighborhood
of the origin, such that $\psi_R\equiv 1$ near the origin. Then we can write  
$k_0=k_0\psi_R + (1-\psi_R)k_0$ Let us denote by $K_{0,R}$
 the convolution operator associated with $\psi_R k_0$.
By Lemma \ref{L1 boundedness of Kd new},
\begin{equation}\begin{split}\label{apr 20 eq:1}
\alpha_0 &=d_c K_0\alpha_0+K_0 d_c \alpha_0\\
&= d_c K_{0,R} \alpha_0 + K_{0,R}d_c \alpha_0 + S_0\alpha_0,
\end{split}\end{equation}
where $S_0$ is defined by
$$
S_0\alpha_0 :=  d_c( (1-\psi_R)k_0\ast \alpha_0) + (1-\psi_R)k_0 \ast d_c\alpha_0.
$$ 
 We set
$$
T_1\alpha := K_{0,R}\alpha_0, \qquad S_1\alpha:=  S_0\alpha_0.
$$
Since the kernel $\psi_R k_0\in L^1$, $K_{0,R}$ maps $L^1$ to $L^1$.

If $\beta\in L^1(B_1)$, we set
$$
T_1 \beta:= K_{0,R}(\chi\beta)_{\big|_{B}}, \qquad S_1\alpha:=  {S_0\alpha_0}_{\big|_{B}}.
$$
We notice that, provided $R>0$ is small enough, the values of $T_1\beta$  do not depend on the continuation of $\beta$ outside $B_1$.
Moreover
$$
{K_{0,R} d_c\alpha_0}_{\big|_B} = K_{0,R} d_c (\chi \alpha)_{\big|_B} = K_{0,R} (\chi d_c\alpha)_{\big|_B} =T_1(d_c\alpha),
$$
since $ d_c (\chi \alpha)\equiv \chi d_c\alpha$ on $B_0$. 
Thus, by \eqref{apr 20 eq:1},
 $$
\alpha = d_c T_1\alpha + T_1d_c \alpha + S_1\alpha \qquad\text{in }B.
$$

Write $\phi=T_1\alpha\in L^1(B_0)$. By difference, $d_c\phi=\alpha-S_1\alpha -T_1 d_c \alpha\in L^1(B_0)$.

 }

Unfortunately, so far one cannot assert that $\phi\in L^q(B_0)$ {  and we must
in some sense ``iterate'' the argument. Let us sketch how this iteration will work:
let $\zeta$ be a cut-off function supported in $B_0$, identically equal to $1$ in a neighborhood $\mc U$ of $B$, and set $\omega=d_c(\zeta\phi)$. 
Obviously, the form $\zeta\phi$ (and therefore also $\omega$) are defined on all $\he n$ and are compactly
supported in $B_0$. In addition, $\omega$ is closed.
Suppose for a while we are able to prove that
\begin{itemize}
\item[a)]  $\omega \in L^1(\he n)$;
\item[b)]  
$\| K_{0,R}\omega\|_{L^q(\he n)} \le C\|\alpha\|_{L^1(B')}$ for some $q>1$,
\end{itemize}
and let us show how the argument can be carried out (here and in the sequel of the proof,
to avoid cumbersome notations, 
when  $L^p$-spaces are involved, we  drop the target spaces).

First we stress that, if $R$ is small enough, then when $x\in B$, $K_{0,R}\omega(x)$
depends only on the restriction of $d_c\phi$ to $\mc U$, so that the map
$$
\alpha \to K_{0,R}\omega \big|_{B}
$$
is linear.

Notice that $\omega=\chi\omega$, so that, by \eqref{apr 20 eq:1},
$$
d_c(\zeta\phi) = \omega = d_cK_{0,R}\omega + S_0\omega.
$$
Therefore in $B$
$$
\alpha-S_1\alpha -T_1 d_c\alpha= d_c\phi = d_c(\zeta\phi) = d_cK_{0,R}\omega + S_0\omega,
$$
and then in $B$
\begin{equation*}\begin{split}
\alpha &= T_1 d_c\alpha + d_c (K_{0,R}\omega \big|_{B}) + S_1\alpha \big|_{B}+ S_0\omega \big|_{B}
\\&=T_1 d_c\alpha + d_c(K_{0,R}(\chi\omega) \big|_{B}) + S\alpha \\
&=:\bar T d_c\alpha + d_cT\alpha + S\alpha.
\end{split}\end{equation*}

First notice that the map $\alpha\to \omega=\omega(\alpha)$ is linear, and hence $\bar T$, $T$ and
$S$ are linear maps. In addition, by b),
\begin{equation*}\begin{split}
\| T\alpha\|_{L^q(B)} \le  \| K_{0,R}(\chi\omega)\|_{L^q(\he n)} =  \| K_{0,R}(\omega)\|_{L^q(\he n)}
\le C\,( \|\alpha\|_{L^1(B')}+ \|d\alpha\|_{L^1(B')}).
\end{split}\end{equation*}

As for the map $\alpha \to S\alpha$ we have just to point out that, when $x\in B$, $S\alpha(x)$
can be written as the convolution of $\alpha_0$ with a smooth kernel with bounded derivatives
of any order.

We observe that the cut-offs $\chi,\zeta$ have no influence on the restriction of $T\alpha$ or $\bar T \alpha$ to $B$. Therefore $T$ and $\bar T$ coincide as bounded operators $L^1(B')\cap d_c^{-1}(L^1(B'))\to L^q(B)$.

Thus we are left with the proof of a) and b). To this end, we must deal separately with the case when
 degree of $\omega$ equals $n+1$.
}

With our previous notations, if the degree of $\phi$ is different from $n$ (i.e. if the degree of $\omega$
 is different from $n+1$), then $[d_c,\zeta]$ is a linear operator of order $0$ with coefficients compactly supported in $B_0$. Therefore
 $$
 \omega   = \zeta d_c\phi +   [d_c,\zeta]\phi                   \in L^1(\he n).
 $$
  Thus, we can apply Lemma \ref{by parts} to $\psi:=\zeta\phi$ and we conclude that 
$$
\omega\in L_0^1(\he n)\cap \mathrm{ker}(d).
$$

{ 
Therefore, by Theorem \ref{core},
$K_0\omega\in L^q(\he n)$, where $q=Q/(Q-1)$. Let us prove that the same assertion holds for
$K_{0,R}\omega$ and hence $\phi\in L^q(B')$. In fact,
\begin{equation*}\begin{split}
K_{0,R}\omega(x) &=  K_0\omega(x) + (\psi_R - 1)k_0 \ast\omega(x)
\\& =
K_0\omega(x) + \int_{\he n}(\psi_R - 1)k_0(y^{-1}x)  \omega(y)\, dy.
\end{split}\end{equation*}

Notice now that $(\psi_R - 1)k_0$ is a smooth function and that $y^{-1}x$ lies in a compact set
when $x\in B'$ (since $\omega$ is compactly supported). Thus 
$$
\| (\psi_R - 1)k_0 \ast\omega\|_{L^q(B')}\le C\,\| (\psi_R - 1)k_0 \|_{L^{q}(B')}\|\omega\|_{L^{1}(B')}\leq C'\,\|\omega\|_{L^{1}(B')}.
$$
}

Suppose now the degree of $\phi$ equals $n$.

Then, by Lemma \ref{leibniz} $ [d_c,\zeta]$, is the sum of
a linear operator  $P_0(W^2\zeta)$ of order $0$ with coefficients
 compactly
 supported in $B_0$, and of a linear operator $P_1(W\zeta)$  of order $1$ with coefficients
 compactly
 supported in $B_0$. As above,
  $$
 \omega  = \zeta d_c\phi +   [d_c,\zeta]\phi   =    
  \zeta d_c\phi +    P_0(W^2\zeta)\phi + P_1(W\zeta)   \phi.   
 $$
 Again
 $$
   \zeta d_c\phi +    P_0(W^2\zeta)\phi   \in L^1(\he n).
$$
{ 
Notice now that, if $\phi$ has degree $n$, then 
$$
P_1(W\zeta)   \phi = P_1(W\zeta)  K_{0,R}(\chi\alpha).
$$
But $\alpha$
has degree $n+1$, so that, by Lemma \ref{L1 boundedness of Kd new}, it is associated with a kernel of type 2.
Thus, by Lemma \ref{truncation} and Corollary \ref{marc alternative coroll}, and keeping in mind that
the coefficients of $P_1$ are compactly supported in $B_0$ 

$$
P_1(W\zeta)   \phi \in L^1(\he n),
$$
so that again
 $$
 \omega  \in L^1(\he n)
 $$
 and
\begin{equation}\label{aug 6}
\| \omega\|_{L^1(\he n)} \le C\|\alpha\|_{L^1(B')}.
\end{equation}

Again, we can apply Lemma \ref{by parts} to $\psi:=\zeta\phi$ and we conclude that 
$$
\omega\in L_0^1(\he n)\cap \mathrm{ker}(d).
$$
This proves a). On the other hand,
 by Theorem \ref{core},
$K_0\omega\in L^q(\he n)$, where $q=Q/(Q-2)$. Arguing as above, the same assertion holds for
$K_{0,R}\omega$ and b) follows keeping in mind \eqref{aug 6}.}

{  We observe that $T\omega=K_{0,R}(\chi\omega)$, where $K$ has compactly supported kernel $\psi_R k_0$, with $k_0$ of type $1$ (resp. type $2$ if $h=n+1$). If $h\not=n+1$, Lemma \ref{berthollet 1} and Theorem \ref{berthollet 3} apply. If $h=n+1$, 
\begin{align}\label{berthollet 4}
WT\omega
&= W(\chi\omega\ast\psi_R k_0)=\chi\omega\ast W(\psi_R k_0)\\
&=\chi\omega\ast(W\psi_R)k_0+\chi\omega\ast\psi_R(Wk_0).
\end{align}
Lemma \ref{berthollet 1} and Theorem \ref{berthollet 3} apply to both terms. They provide an $r>0$ and a $p>1$ such that $T$ (resp. $WT$) maps $L^1(\he{n})$ to $W^{r,p}_{\mathrm{loc}}(\he{n})$ and $W^{s,p}(\he{n})$ to $W^{s+1,p}_{\mathrm{loc}}(\he{n})$.
}

\end{proof}

\subsection{Composition of homotopies}

This is the final step which provides an inverse to $d_c$ on $d_c$-closed $L^1$ forms defined on a ball, as stated in Theorem \ref{poincareintro}.

\begin{corollary}[Interior Poincar\'e and Sobolev inequalities]\label{poincare}
Let $B\Subset B'$ be concentric balls in $\he n$. For $h=1,\ldots,2n$, let $q=Q/(Q-1)$ if $h\not=n+1$ and $q=Q/(Q-2)$ if $h=n+1$. For every $d_c$-closed $h$-form $\alpha\in L^1(B',E_0^h)$, there exists an $h-1$-form $\phi\in L^q(B,E_0^{h-1})$, such that 
$$
d_c\phi=\alpha_{|B}\qquad\mbox{and}\qquad \|\phi\|_{L^q(B,E_0^{h-1})}\leq C\,\|\alpha\|_{L^1(B',E_0^h)}.
$$
Furthermore, if $\alpha$ is compactly supported, so is $\phi$.
\end{corollary}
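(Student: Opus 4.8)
The plan is to assemble Corollary~\ref{poincare} out of the smoothing homotopy of Proposition~\ref{smoothing}, the global estimate Theorem~\ref{core} (through Theorem~\ref{poincareglobal}), and a detour through de~Rham's complex that disposes of the smooth remainder. First I would insert auxiliary concentric balls $B\Subset B_2\Subset B_1\Subset B'$ and apply Proposition~\ref{smoothing} to the pair $B_1\Subset B'$, with the smoothing order $s$ taken large (how large being fixed below). Since $\alpha$ is $d_c$-closed, the homotopy identity reduces on $B_1$ to
$$
\alpha=d_c T\alpha+S\alpha ,
$$
with $\|T\alpha\|_{L^q(B_1,E_0^{h-1})}\le C\|\alpha\|_{L^1(B',E_0^h)}$, with $S\alpha\in W^{s,q}(B_1,E_0^{h})$ and $\|S\alpha\|_{W^{s,q}(B_1)}\le C\|\alpha\|_{L^1(B')}$, and with $S\alpha$ again $d_c$-closed on $B_1$ because $d_c S=S d_c$ there. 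Thus $T\alpha$ is the primitive we want up to the smooth closed error $S\alpha$, and it remains to produce an $(h-1)$-form $\eta$ on $B$ with $d_c\eta=S\alpha$ on $B$ and $\|\eta\|_{L^q(B)}\le C\|S\alpha\|_{W^{s,q}(B_1)}$; then $\phi:=(T\alpha+\eta)_{|B}$ does the job.

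To build $\eta$ I would pass to de~Rham's complex by Rumin's operators. Set $\beta:=\Pi_E(S\alpha)$: since $\Pi_E$ has horizontal order $\le 1$ and intertwines $d_c$ and $d$, it maps the smooth form $S\alpha$ to a smooth, $d$-closed ordinary $h$-form $\beta$ on $B_1$. On the ball $B_2$ apply Iwaniec--Lutoborsky's homotopy operator $\mc I$ of \cite{IL}: as $\beta$ is closed and $\mc I$ is a chain homotopy ($d\mc I+\mc Id=1$), the form $\gamma:=\mc I\beta$ satisfies $d\gamma=\beta$ on $B_2$, and because $\beta$ is smooth, $\gamma$ is smooth up to $\overline{B}$, with $\|\gamma\|_{L^q(B_2)}$ controlled by a fixed high Sobolev norm of $\beta$ — using the Euclidean Sobolev embedding $W^{m,1}(\R^{2n+1})\hookrightarrow L^q$, which holds both for $q=Q/(Q-1)$ and for $q=Q/(Q-2)$ once $m$ is taken large enough — hence by $C\|S\alpha\|_{W^{s,q}(B_1)}$ provided $s$ was chosen large. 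Finally set $\eta:=\Pi_{E_0}(\gamma)_{|B}$. Since $\Pi_{E_0}$ is algebraic, $\|\eta\|_{L^q(B)}\le C\|\gamma\|_{L^q(B_2)}$, and using $d_c\Pi_{E_0}=\Pi_{E_0}d$ together with the identity $\Pi_{E_0}\Pi_E=\mathrm{id}$ on Rumin forms recalled in Section~\ref{heisenberg},
$$
d_c\eta=\Pi_{E_0}d\gamma=\Pi_{E_0}\beta=\Pi_{E_0}\Pi_E(S\alpha)=S\alpha\qquad\text{on }B .
$$
Therefore $\phi=(T\alpha+\eta)_{|B}$ satisfies $d_c\phi=\alpha_{|B}$ and $\|\phi\|_{L^q(B)}\le C\|\alpha\|_{L^1(B')}$; relabelling the balls gives the statement for $B\Subset B'$ as originally given.

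For the last clause, if $\alpha$ is compactly supported I would argue directly: for $h\le 2n$ the compactly supported cohomology of $\he n$ vanishes in degree $h$ (Rumin's complex being homotopy equivalent to de~Rham's and $H^h_c(\R^{2n+1})=0$), so $\alpha$ is $d_c$ of a compactly supported Rumin form; by Lemma~\ref{by parts} it then has vanishing averages, and Theorem~\ref{poincareglobal} furnishes a compactly supported $\phi\in L^q$ with the required bound. Alternatively one keeps the construction above and notes that $T$, $S$, $\Pi_E$, $\Pi_{E_0}$ and a compactly supported variant of $\mc I$ each enlarge supports by an amount that can be made arbitrarily small.

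Essentially all the quantitative work — the truncated-kernel homotopy, the upgrade of the $L^1$ primitive to $L^q$ via a cut-off and Theorem~\ref{core}, and the regularity of the remainder — is already packed into Proposition~\ref{smoothing}, which we may invoke; so the genuinely new step here is the de~Rham detour for $S\alpha$. The point that needs care is that this detour must deliver an $L^q$ primitive with a bound uniform in $\alpha$: one must check that composing $\Pi_E$, the Euclidean homotopy $\mc I$ and $\Pi_{E_0}$ is legitimate on a ball, and that the Euclidean Sobolev embedding reaches \emph{both} critical exponents $Q/(Q-1)$ and $Q/(Q-2)$ — which works precisely because $S\alpha$ may be smoothed to arbitrarily high order. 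The remaining difficulty is purely geometric bookkeeping: inserting enough intermediate balls so that Proposition~\ref{smoothing}, the operator $\Pi_E$ and the homotopy $\mc I$ each have room to act, and, for the compact-support clause, controlling the small growth of supports.
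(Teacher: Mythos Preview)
Your argument is correct and follows the same route as the paper: apply Proposition~\ref{smoothing} to split $\alpha=d_cT\alpha+S\alpha$, then handle the smooth closed remainder $S\alpha$ by passing to de~Rham via $\Pi_E$, applying Iwaniec--Lutoborski's homotopy, and returning via $\Pi_{E_0}$, using $d_c\Pi_{E_0}=\Pi_{E_0}d$ and $\Pi_{E_0}\Pi_E=\mathrm{id}$ on $E_0^\bullet$. The paper is slightly more explicit about the intermediate norms (it takes $S\alpha$ with three derivatives in $L^2$, so $\beta=\Pi_E S\alpha$ has two, and Iwaniec--Lutoborski plus Euclidean Sobolev give $\gamma\in L^{Q/(Q-2)}\subset L^{Q/(Q-1)}$), whereas you invoke a generic high-order embedding; both work. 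For the compact-support clause the paper replaces Iwaniec--Lutoborski by the Mitrea--Mitrea--Monniaux homotopy, which preserves compact supports; your alternative~(a), reducing to Theorem~\ref{poincareglobal} after checking $\alpha\in L^1_0$, is a legitimate shortcut (a mollification step is needed to invoke $H^h_c=0$ before applying Lemma~\ref{by parts}, but this is routine).
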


\begin{proof}
Proposition \ref{smoothing} allows to replace $\alpha$ with $S\alpha$ whose first 3 derivatives, in $L^2$ norm, are controlled by $\|\alpha\|_1$. Then $\beta=\Pi_E(S\alpha)$ and its 2 first derivatives are controlled by $\|\alpha\|_1$, and $d\beta=0$. Apply Iwaniec-Lutoborski's  homotopy \cite{IL} to get a differential $(h-1)$-form $\gamma$ on $B$ such that $d\gamma=\beta$ and with 2 first derivatives controlled by $\|\alpha\|_1$ in $L^{2}$ (IL's homotopy is an operator of type $1$). The Euclidean Sobolev inequality implies that $\|\gamma\|_q$ is controlled by $\|\alpha\|_1$, for $q=Q/(Q-2)$. \emph{A fortiori}, for $q=Q/(Q-1)$. So is $\|\phi\|_q$, where $\phi=\Pi_{{E}_0}\gamma$ satisfies
\begin{align*}
d_c\phi&=\Pi_{{E}_0}d\Pi_E \Pi_{{E}_0}\gamma\\
&=\Pi_{{E}_0}d\gamma\\
&=\Pi_{{E}_0}\Pi_E S\alpha\\
&=S\alpha.
\end{align*}
This proves the interior Poincar\'e inequality. 

Allowing $S$ to win $4$ derivatives instead of $3$ provides a control on the $W^{2,p}$ norm of $\beta$ for some $p>1$. This allows to replace Iwaniec-Lutoborski's homotopy \cite{IL} with Mircea-Mircea-Monniaux' homotopy \cite{mitrea_mitrea_monniaux} which preserves compactly supported forms. When $\alpha$ is compactly supported, so are $S\alpha$, $\beta$, the primitive $\gamma$ provided by Mircea-Mircea-Monniaux, and $\phi$. This proves a Sobolev inequality.
\end{proof}

{  
\begin{remark} Without loss of generality, in Corollary \ref{poincare} we can assume that $d^*_c\phi =0$,
provided we replace $B$ by a smaller ball $\tilde B\Subset B$.
Indeed, let $\psi$ be a cut-off function, $\psi\equiv 1$ on $\tilde B$ and $\supp \psi\subset B$. Set
$$
\tilde\phi := d^*_c d_c \Delta_{\mathbb H}^{-1} (\psi \phi).
$$
Obviously, $d^*_c \tilde\phi =0$.
Since $d^*_cd_c \Delta_{\mathbb H}^{-1}$ is associated with a kernel of type 0 and $q>1$, we have
\begin{equation*}\begin{split}
\| \tilde\phi  &\|_{L^q(\tilde B,E_0^{h-1})} \le \| \tilde\phi\|_{L^q(\he n,E_0^{h-1})}\
\\&
\le C  \|\psi\phi\|_{L^q(\he n,E_0^{h-1})} \le C  \|\phi\|_{L^q(B,E_0^{h-1})}
\le C  \|\alpha\|_{L^q(B',E_0^{h})}.
\end{split}\end{equation*}

Notice now that
$$
\psi \phi = d_cd^*_c \Delta_{\mathbb H}^{-1} (\psi \phi) + d^*_cd_c \Delta_{\mathbb H}^{-1} (\psi \phi)
= d_cd^*_c \Delta_{\mathbb H}^{-1} (\psi \phi) + \tilde\phi.
$$
Thus in $\tilde B$
$$
d_c \tilde\phi  = d_c (\psi\phi) =d_c \phi = \alpha.
$$
\end{remark}
}

The following globalization procedure, established for spaces of $L^p$ differential forms, $p>1$, in \cite{BFP2}, extends to $L^1$.

\section{Bounded geometry Riemannian and contact manifolds}\label{bounded geometry}

A contact structure on an  odd-dimensional manifold $M$ is a smooth distribution of hyperplanes $H$ which is maximally 
nonintegrable in the following sense: if $\theta$ is a locally defined smooth 1-form such that $H=\mathrm{ker}(\theta)$, then $d\theta$ restricts to a non-degenerate 2-form on $H$,
{ i.e. if $2n+1$ is the dimension of $M$, then $\theta\wedge(d\theta)^n\neq 0 $ on $M$ (see \cite{mcduff_salamon}, Proposition 3.41).}  A contact manifold $(M,H)$ is the data of a smooth manifold $M$ and a contact structure $H$ on $M$. 

Contact diffeomorphisms  are contact structure preserving diffeomorphisms between contact manifolds. 

We recall that, by a classical {  theorem of} Darboux, any contact manifold $(M,H)$ is locally 
contact diffeomorphic to the Heisenberg group $\he n$ (see \cite{mcduff_salamon}, p. 112).

We recall that the construction of Rumin's complex can be carried out for general contact manifolds (see, e.g. \cite{rumin_jdg},
\cite{rumin_palermo}) yielding a complex of differential forms - still denoted by $(E_0^\bullet, d_c)$ - such that
\begin{itemize}
\item[i)] $d_c^2=0$;
\item[ii)] the complex $(E_0^\bullet,d_c)$ is homotopically equivalent to the de Rham complex 
$\Omega:= (\Omega^\bullet,d)$. Thus,  if $D\subset \he n$ is an open set, unambiguously we write $H^h(D)$
for the $h$-th cohomology group;
\item[iii)] $d_c: E_0^h\to E_0^{h+1}$ is a homogeneous differential operator in the 
horizontal derivatives (i.e. derivatives along $H$)
of order 1 if $h\neq n$, whereas $d_c: E_0^n\to E_0^{n+1}$ is an homogeneous differential operator 
of order 2 in the 
horizontal derivatives.
\end{itemize}

Moreover, if $\phi $ is a contactomorphism  from an open set $\mc U\subset \he{n}$ to $M$, and we denote by $\mc V$
the open set  $\mc V:= \phi(\mc U)$, we have
\begin{itemize}
\item[i)] $\phi^\# E_0^\bullet(\mc V) = E_0^\bullet(\mc U) $;
\item[ii)] $d_c\phi^\# = \phi^\# d_c$.
\item[iii)] if $\zeta$ is a smooth function in $M$, then the differential operator in $\mc U\subset \he n$ defined by $v \to \phi^\#[d_c,\zeta] (\phi^{-1})^\#v$ is a differential operator of order
zero if $v\in E_0^h(\mc U)$, $h\ne n$ and a differential operator of order
1 if $v\in E_0^n(\mc U)$
\end{itemize}
(see \cite{BFP2}, Proposition 3.11).
\medskip

If a Riemanniam metric $g$ is defined on $H$, we refer to the $(M,H,g)$ as to a sub-Riemannian contact manifold.

In turn, in any sub-Riemannian contact manifold $(M,H,g)$ we can define a sub-Rieman\-nian distance $d_M$ (see e.g. \cite{montgomery}) inducing on $M$ the same topology of $M$ as a manifold. In particular, Heisenberg groups can be viewed as sub-Riemannian contact manifolds. If we choose on the contact sub-bundle of $\he n$ a left-invariant metric, it turns out that the associated sub-Riemanian metric is left-invariant, too.

\subsection{Bounded geometry and controlled coverings}

We give now the definition of Riemannian manifold of bounded geometry as well as the definition of
contact manifold of bounded geometry.

\begin{definition}\label{bded geometry bis} Let {  $k$} be a positive integer and
let $B(0,1)$ denote the unit  ball in $\mathbb R^n$.  We say that a  Riemannian manifold $(M,g)$ has \emph{bounded $C^k$-geometry} is there exist constants $r, C>0$ such that, for every $x\in M$, 
there exists a  diffeomorphism preserving  $\phi_x : B(0,1)\to M$ {  that satisfies}
\begin{enumerate}
  \item $B(x,r)\subset\phi_x(B(0,1))$;
  \item $\phi_x$ is $C$-bi-Lipschitz, i.e.
  \begin{equation}\label{bilipR}
\frac1C|p-q| \le  d_M(\phi_x (p), \phi_x(q)) \le C|p-q| \qquad \mbox{for all $p,q\in B(0,1)$};
\end{equation}
  \item coordinate changes $\phi_x\circ\phi_y^{-1}$ and their first $k$ derivatives  are bounded by $C$.
\end{enumerate}
\end{definition}

The counterpart of the above definition for subRiemannian contact manifolds reads as follows:

\begin{definition}\label{contact bis} Let {  $k$} be a positive integer and
let $B(e,1)$ denote the unit subRiemannian ball in $\he n$.  We say that a subRiemannian contact manifold $(M,H,g)$ has \emph{bounded $C^k$-geometry} is there exist constants $r, C>0$ such that, for every $x\in M$, 
there exists a contactomorphism (i.e. a diffeomorphism preserving the contact structure) $\phi_x : B(e,1)\to M$ that satisfies
\begin{enumerate}
  \item $B(x,r)\subset\phi_x(B(e,1))$;
  \item $\phi_x$ is $C$-bi-Lipschitz, i.e.
  \begin{equation}\label{bilip}
\frac1Cd(p,q)\le  d_M(\phi_x (p), \phi_x(q)) \le Cd(p,q)\qquad \mbox{for all $p,q\in B(e,1)$};
\end{equation}
  \item coordinate changes $\phi_x\circ\phi_y^{-1}$ and their first $k$ derivatives with respect to unit left-invariant horizontal vector fields are bounded by $C$.
\end{enumerate}
\end{definition}

In \cite{BFP2}, Lemma 5.10, we proved the following covering lemma (that is basically  \cite{mattila}, Theorem 1,2). We state
it for subRiemannian contact manifolds, but it still holds in the Riemannian setting.

\begin{lemma}\label{covering} Let $(M,H,g)$ be a bounded $C^k$-geometry subRiemannian contact manifold,
where $k$ is a positive integer. Then there
exists $\rho>0$ (depending only on the radius $r$ of Definition \ref{contact bis}) and an at most countable covering $\{ B(x_j, \rho)\}$ of $M$ such that
\begin{itemize}
\item[i)] each ball $B(x_j, \rho)$ is contained in the image of one of the contact charts of Definition \ref{contact bis};
\item[ii)] $B(x_j, \frac15\rho)\cap B(x_i, \frac15\rho)=\emptyset$ if $i\neq j$;
\item[iii)] the covering is uniformly locally finite. Even more,  there exists a  $N=N(M)\in \mathbb N$ such that 
for each ball $B(x,\rho)$ 
$$
\#\{k\in \mathbb N \mbox{ such that } B(x_k,\rho)\cap B(x,\rho)\neq\emptyset\} \le N.
$$
In addition, if $B(x_k,\rho)\cap B(x,\rho)\neq\emptyset$, then $B(x_k,\rho)\subset B(x,r)$, where $B(x,r)$ has
been defined in Definition \ref{contact bis}-(2));
\item[iv)] all balls $B(x_k,\rho)$ have comparable measures.
\end{itemize}

\end{lemma}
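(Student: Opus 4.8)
The plan is to prove this by the classical maximal-disjoint-family construction (a Vitali-type argument), exactly as in \cite{BFP2}, Lemma 5.10, which in turn adapts \cite{mattila}, Theorems 1 and 2. First I would fix the constants $r,C>0$ and the charts $\phi_x\colon B(e,1)\to M$ from Definition \ref{contact bis}, and choose the radius $\rho>0$ small enough that $3\rho<r$; this guarantees that every ball $B(x_k,\rho)$ which meets a ball $B(x,\rho)$ is contained in $B(x,r)$, hence in the domain $\phi_x(B(e,1))$ of a single chart. Next, using that $M$ is second countable (so that any pairwise disjoint family of balls is at most countable) together with Zorn's lemma, I would select an at most countable set $\{x_j\}\subset M$ that is \emph{maximal} among sets whose balls $B(x_j,\tfrac15\rho)$ are pairwise disjoint. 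Maximality immediately gives (ii), and also forces $\{B(x_j,\rho)\}$ to cover $M$: if some $x$ were not covered, then $d_M(x,x_j)\ge\tfrac25\rho$ for all $j$, so $B(x,\tfrac15\rho)$ could be added to the family. Assertion (i) and the inclusion clause of (iii) then follow from $\rho<r$ and the triangle inequality.

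The substantive point is the uniform multiplicity bound. Fix $x$ and let $J=\{k:B(x_k,\rho)\cap B(x,\rho)\neq\emptyset\}$. For $k\in J$ one has $d_M(x,x_k)<2\rho$, so all the (pairwise disjoint) balls $B(x_k,\tfrac15\rho)$, $k\in J$, lie inside $B(x,\tfrac{11}5\rho)\subset B(x,r)\subset\phi_x(B(e,1))$. I would then pull everything back by $\phi_x^{-1}$: since $\phi_x$ is $C$-bi-Lipschitz for the subRiemannian distances, the preimages $\phi_x^{-1}(B(x_k,\tfrac15\rho))$ are pairwise disjoint subsets of $\he n$, each of which contains a $d$-ball of radius $\rho/(5C)$ and all of which lie in a single $d$-ball of radius $\tfrac{11C}5\rho$. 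Because Haar measure on $\he n$ is $Q$-homogeneous, $|B(p,t)|=c_Q t^Q$ with $Q=2n+2$, summing over $k\in J$ gives $\#J\le(11C^2)^Q=:N$, a constant depending only on $C$ and $n$, i.e.\ only on $M$; this is (iii). For (iv), the same bi-Lipschitz change of variables together with the $Q$-homogeneity of Haar measure shows $c\,\rho^Q\le|B(x_j,\rho)|\le C'\rho^Q$ for every $j$, with $c,C'$ uniform. The Riemannian case is identical, with Euclidean balls and exponent $n$ replacing Korányi balls and exponent $Q$.

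The only real obstacle is this multiplicity estimate, and within it the one delicate bookkeeping issue: ensuring that \emph{all} balls relevant to a fixed $x$ lie in the image of \emph{one} chart $\phi_x$, so that the volume comparison may be carried out inside $\he n$. This is exactly what the choice $3\rho<r$ (rather than merely $\rho<r$) secures; once everything is inside a single chart, the argument reduces to Ahlfors $Q$-regularity of Haar measure, which is standard. I expect the remaining verifications (countability, covering, the inclusion clause, measure comparison) to be routine triangle-inequality bookkeeping.
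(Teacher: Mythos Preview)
Your proposal is correct and follows precisely the Vitali-type maximal-disjoint-family argument that the paper attributes to \cite{BFP2}, Lemma 5.10, and \cite{mattila}; the paper itself does not give a proof but only cites these references, and your reconstruction matches that cited approach in every essential point.
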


\subsection{Sobolev spaces of Rumin forms on contact manifolds}

A key feature of Rumin's complex for Heisenberg groups is its invariance under smooth 
contactomorphisms: if $U$ and $V$ are open subsets of $\he{n}$ and $\phi:U\to V$ is a contact 
structure preserving diffeomorphism, then $\phi$ pulls back Rumin forms. We use the same 
notation $\phi^{\#}$ as for the pull-back of usual differential forms. We use this to define 
Sobolev spaces on bounded geometry contact subRiemannian manifolds. They will be needed in the construction of global smoothing homotopies, Proposition \ref{bddgeometry}.

In the Riemannian setting, Sobolev spaces of differential forms are invariant with respect to the 
pull-back operator associated with sufficiently smooth diffeomorphisms (see, e.g. \cite{schwarz}, Lemma 1.3.9).  
An analogous statement holds for Folland-Stein Sobolev spaces in Heisenberg groups,
provided we restrict ourselves to contact diffeomorphisms. Indeed we have:

\begin{lemma}\label{pullback}
If $k$ is a positive integer, let $U, V\subset \he n$ be connected open extension subsets of $\he n$ (see Definition \ref{extension domain}). 
 Let $U_0,V_0$ be open neighborhoods of
$U$ and $V$, respectively, and let $\phi:U_0\to V_0$ be a $C^k$-bounded contact diffeomorphism such that
$\phi (U) \subset V$. If $p>1$ and $s$ is a real number, $0\leq s\leq k-1$
then the pull-back operator $\phi^{\#}$ from $W^{s,p}(V,E_0^\bullet) $  to $W^{s,p}(U,E_0^\bullet)$  is bounded, 
and its norm depends only on the $C^k$ norms of $\phi$ and $\phi^{-1}$. This extends to $p=1$ if $s$ is an integer.
\end{lemma}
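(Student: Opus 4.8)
The plan is to reduce the statement to the already-established mapping properties of the Rumin complex and of Folland--Stein Sobolev spaces on Heisenberg groups, together with a careful bookkeeping of how many derivatives a $C^k$-bounded contactomorphism costs. First I would recall the structure of a Rumin $h$-form: by the formal properties listed in Section \ref{heisenberg}, $E_0^h$ is a left-invariant subbundle of $\bigwedge^h T^*\he n$, and the pull-back by a contactomorphism $\phi$ preserves this subbundle, $\phi^\# E_0^\bullet(V)=E_0^\bullet(U)$. Writing $\alpha=\sum_I \alpha_I \,\xi^h_I$ in a fixed left-invariant frame $\{\xi^h_I\}$ of $E_0^h$, we have $\phi^\#\alpha=\sum_I (\alpha_I\circ\phi)\,\phi^\#\xi^h_I$, and each $\phi^\#\xi^h_I$ is itself a section of $E_0^h$ whose coefficients in the frame are smooth functions built algebraically from the entries of $\phi$ and its horizontal derivatives up to order one (the usual pull-back of a covector field involves one derivative of $\phi$; the algebraic projection $\Pi_{E_0}$ that puts the result back into $E_0^h$ is zeroth order in degree $\ne n$ and costs one further horizontal derivative in degree $n$, cf. the properties of $\Pi_E$, $\Pi_{E_0}$). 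So the problem becomes: show that $f\mapsto (f\circ\phi)\,\psi$, with $\psi\in W^{k-1,p}_{\mathrm{loc}}$ a coefficient of $\phi^\#\xi^h_I$, is bounded $W^{s,p}(V)\to W^{s,p}(U)$ for $0\le s\le k-1$.

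Next I would treat the two ingredients of this composition-and-multiplication map separately. Multiplication by a fixed function $\psi$ of class $C^{k-1}$ with bounded derivatives is bounded on $W^{s,p}$ for $0\le s\le k-1$: for integer $s$ this is Leibniz' rule together with Proposition \ref{integer spaces} (or Theorem \ref{berthollet 2} when $\psi\in\mc D$), and for fractional $s$, $p>1$, it follows by complex interpolation of the integer cases using Remark \ref{interpolation remark}; this is where we need $p>1$ unless $s$ is an integer. For the composition operator $f\mapsto f\circ\phi$: since $\phi$ is a $C^k$-bounded contact diffeomorphism, horizontal derivatives transform well under $\phi$, namely $W_j(f\circ\phi)=\sum_\ell (a_{j\ell}\circ\phi)\,((W_\ell f)\circ\phi)$ where the $a_{j\ell}$ are, up to the contact condition, the entries of the differential of $\phi$ restricted to the horizontal bundle, hence of class $C^{k-1}$ with bounded norm (this is exactly property (iii) in Definition \ref{contact bis}, and it is recorded for the commutators $[d_c,\zeta]$ in \cite{BFP2}, Proposition 3.11). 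Iterating this $\ell$ times with $d(I)\le \ell\le k-1$ expresses $W^I(f\circ\phi)$ as a finite sum of terms $(\text{bounded }C^{k-1-|I|}\text{ function})\cdot((W^J f)\circ\phi)$ with $d(J)\le d(I)$; combined with the bi-Lipschitz bound \eqref{bilip}, which makes $\phi$ a bounded change of variables for Lebesgue measure on the relevant balls, one gets boundedness $W^{\ell,p}(V)\to W^{\ell,p}(U)$ for integer $\ell\le k-1$. Again interpolation upgrades this to all real $s\in[0,k-1]$ when $p>1$.

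Finally I would assemble the pieces. Using the extension property of $U,V$ (Definition \ref{extension domain}) and the definition \eqref{fractional sobolev in domains norm} of Sobolev norms on domains, it suffices to estimate on all of $\he n$ after extending: given $\alpha\in W^{s,p}(V,E_0^\bullet)$, extend to $\tilde\alpha\in W^{s,p}(\he n)$ with comparable norm, cut off near $V_0$ so that $\phi^\#\tilde\alpha$ makes sense and agrees with $\phi^\#\alpha$ on $U$, apply the composition-and-multiplication estimate above coefficient by coefficient, and then take the infimum over extensions to bound $\|\phi^\#\alpha\|_{W^{s,p}(U,E_0^\bullet)}$ by $C\|\alpha\|_{W^{s,p}(V,E_0^\bullet)}$. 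Tracking the constants shows they depend only on the $C^k$ norms of $\phi$ and $\phi^{-1}$ (and on $U,V,U_0,V_0$, which are fixed). The main obstacle — and the only place some care is genuinely needed — is the bookkeeping in degree $h=n$, where $d_c$ and the re-projection into $E_0^n$ are second order: there the pull-back formula for Rumin $n$-forms involves one extra horizontal derivative of $\phi$, so one must check that the coefficient functions appearing are of class $C^{k-1}$ (not merely $C^{k-2}$) with bounded norms, which is exactly what property (iii) of Definition \ref{contact bis} at order $k$ guarantees, hence the hypothesis $0\le s\le k-1$ is sharp for this argument. A secondary point is the fractional case, which is handled purely by interpolation and forces $p>1$ there, matching the statement.
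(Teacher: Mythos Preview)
Your argument is correct and follows the same route as the paper: establish boundedness for integer $s$ via the chain rule (expressing horizontal derivatives of $\phi^\#\alpha$ in terms of horizontal derivatives of the components of $\alpha$ and $C^{k-1}$-bounded coefficients built from $\phi$), then interpolate for fractional $s$ when $p>1$, and pass to domains via the extension property and a cut-off supported in $V_0$. The paper's proof is more compressed---it packages the whole map as $u\mapsto (\phi^\#(\psi u))_0$ acting on $W^{s,p}(\he n)$ and invokes ``chain rule, density, interpolation'' in one line---but the content is the same.

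One minor correction: your concern about degree $h=n$ is misplaced. The pull-back $\phi^\#$ on Rumin forms is just the ordinary pull-back of differential forms restricted to the subbundle $E_0^h$ (since contactomorphisms preserve $E_0^\bullet$, no re-projection is needed), and this involves only first derivatives of $\phi$ in every degree. The second-order phenomenon lives in $d_c:E_0^n\to E_0^{n+1}$, not in $\phi^\#$; and $\Pi_{E_0}$ is algebraic in all degrees (it is $\Pi_E$ that has horizontal order $\le 1$). So the bookkeeping in degree $n$ requires no special care for this lemma.
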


\begin{proof}
Consider the case $p>1$. The proof for the case $p=1$ is analogous but shorter, since
we do not need interpolation arguments. 
 Let $\psi\in\mc D(\he{n})$ be a cut-off function supported in $V_0$, $\psi\equiv 1$ on $V$. If $u\in  \mc D(\he{n}, E_0^\bullet)$, then
$\phi^{\#} (r_{V_0}(\psi u))$ is well defined and supported in $U_0$, so that can be continued by zero outside $U_0$.
Denote by $\big(\phi^{\#} (r_{V_0}(\psi u))\big)_0$ this extension.
Suppose now $u\in \mc D(\he{n}, E_0^\bullet)$, and consider the map
$$
u\to L(u):= \big(\phi^{\#} (r_{V_0}(\psi u))\big)_0.
$$
If $s$ is an integer, by the chain rule and our assumptions on $\phi$
$$
\| \big(\phi^{\#} (r_{V_0}(\psi u))\big)_0\|_{W^{s,p}(\he n, E_0^\bullet)} \le C \|u\|_{W^{s,p}(\he{n}, E_0^\bullet)}.
$$
Thus, by density and interpolation,
$L$ is a bounded linear operator from $W^{s,p}(\he{n},E_0^\bullet)$ to $W^{s,p}(\he{n},E_0^\bullet)$ for $s\ge 0$.

Take now $\alpha\in W^{s,p}(V,E_0^\bullet) $, an let $\tilde\alpha\in W^{s,p}(\he{n},E_0^\bullet)$ an arbitrary extension of $\alpha$
outside $V$.  We notice  that $\big(\phi^{\#} (r_{V_0}(\psi \tilde\alpha)\big)_0$ is an
extension of $\phi^{\#} (\alpha)$ outside $U$. Indeed,  if $x\in U$ (and therefore $\phi(x)\in V$)
and $v_1,\dots,v_\bullet$ are tangent vectors at $x$, we have 
\begin{equation*}\begin{split}
\big(\phi^{\#} &  (r_{V_0}(\psi \tilde\alpha)\big)_0(x) (v_1,\dots,v_\bullet) 
\\&=
\phi^{\#} (r_{V_0}(\psi \tilde\alpha)(x)(v_1,\dots,v_\bullet)
\\&=
r_{V_0}(\psi \tilde\alpha)(\phi(x))(d\phi(x)v_1,\dots,d\phi(x)v_\bullet)
\\&=
\psi \tilde\alpha (\phi(x)) (d\phi(x)v_1,\dots,d\phi(x)v_\bullet)
\\&=
\alpha (\phi(x)) (d\phi(x)v_1,\dots,d\phi(x)v_\bullet)
\\&=
\phi^{\#} (\alpha)(x) (v_1,\dots,v_\bullet).
\end{split}\end{equation*}
Then
\begin{equation*}\begin{split}
\| \phi^{\#} &  (\alpha) \|_{W^{s,p}( UE_0^\bullet)}
\le
\| \big(\phi^{\#} (r_{V_0}(\psi \tilde\alpha)\big)_0 \|_{W^{s,p}( \he{n}, E_0^\bullet)}.
\le C \| \tilde\alpha \|_{W^{s,p}( \he{n}, E_0^\bullet)}.
\end{split}\end{equation*}
Taking the infimum of the right-hand side of this inequality for all extensions $\tilde\alpha\in W^{s,p}( \he{n}, E_0^\bullet)$
of $\alpha$, the assertion follows.

\end{proof}

\begin{definition}\label{carte} 
Let $k$ be a positive integer, and let $(M,H,g)$ be a bounded $C^k$-geometry subRiemannian contact manifold,  and 
let $\{\chi_j\}$ be a partition of unity subordinate to the atlas $\mc U:=\{ B(x_j, \rho), \phi_{x_j}\}$ of Lemma \ref{covering}. 
From now on, for the sake of
simplicity, we shall write $\phi_j:=\phi_{x_j}$. We stress that $\phi_j^{-1}(\mathrm{supp}\;\chi_j) \subset B(e,1)$.
Fix $a\geq 1$, $p\geq 1$ and $s\in \mathbb R$, $0\leq s\leq k-1$. If $\alpha$ is a Rumin differential form on $M$, we say that $\alpha\in \ell^a(W^{s,p})_{\mc U}(M, E_0^\bullet)$ if
$$
\phi_j^\# (\chi_j\alpha) \in W^{s,p}(\he n, E_0^\bullet)\quad\mbox{for $j\in \mathbb N$}
$$
(notice that 
$\phi_j^\# (\chi_j\alpha)$ 
is compactly supported in $B(e,1)$ and therefore can be continued by zero on
all of $\he n$) and the sequence $\|\phi_j^\# (\chi_j\alpha)\|_{W^{s,p}(\he n, E_0^\bullet)}^a$ is summable. Then we set
$$
\|\alpha\|_{\ell^a(W^{s,p})_{\mc U}(M, E_0^\bullet)}:= \left(\sum_j \|\phi_j^\# (\chi_j\alpha)\|_{W^{s,p}(\he n, E_0^\bullet)}â\right)^{1/a}.
$$
\end{definition}

Obviously, the same definition can be formulated for bounded $C^k$-geometry Riemannian manifolds. One recovers global $W^{s,p}$ spaces of $\R^n$ and $\he{n}$ by taking $a=p$.

The following result shows that the definition of the Sobolev spaces $\ell^a(W^{s,p})_{\mc U}(M, E_0^\bullet)$ do not
depend on the atlas $\mc U$.  An analogous statement holds in the Riemannian setting.
Therefore, once the proposition is proved, we drop the index $\mc U$ from the notation for Sobolev norms.

\begin{proposition} Let $k$, $a$, $p$ and $s$ be as above, and let $(M,H,g)$ be a bounded $C^k$-geometry subRiemannian contact manifold. If $\mc U':=\{ B(y_j, \rho'), \phi_{y_j}'\}$ is another
atlas of $M$ satisying Definition \ref{contact bis} and Lemma \ref{covering} with the same choice of $\rho$, and $\{\chi_j'\}$ is an associated partition of
unity, then
$$
\ell^a(W^{s,p})_{\mc U}(M, E_0^\bullet)= \ell^a(W^{s,p})_{\mc U'}(M, E_0^\bullet),
$$
with equivalent norms. 
\end{proposition}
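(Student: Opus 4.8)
The plan is to show that the identity map is bounded in both directions between the two norms, which by symmetry reduces to a single estimate. So I fix $\alpha\in\ell^a(W^{s,p})_{\mc U}(M,E_0^\bullet)$ and estimate $\|\phi_{y_j}'^\#(\chi_j'\alpha)\|_{W^{s,p}(\he n,E_0^\bullet)}$ in terms of the numbers $\|\phi_{x_i}^\#(\chi_i\alpha)\|_{W^{s,p}}$. First I would write $\chi_j'\alpha=\sum_i\chi_j'\chi_i\alpha$, where the sum is over the (at most $N$, by Lemma \ref{covering}-(iii)) indices $i$ for which $B(x_i,\rho)\cap B(y_j,\rho')\neq\emptyset$; call this set of indices $I_j$. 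Thus
$$
\phi_{y_j}'^\#(\chi_j'\alpha)=\sum_{i\in I_j}\phi_{y_j}'^\#\big(\chi_j'\chi_i\alpha\big)
=\sum_{i\in I_j}\big(\phi_{y_j}'^{-1}\circ\phi_{x_i}\big)^{\#}\,\phi_{x_i}^\#\big(\chi_j'\chi_i\alpha\big),
$$
using functoriality of the pull-back $(\psi\circ\phi)^\#=\phi^\#\psi^\#$. Here the transition map $\phi_{ij}:=\phi_{y_j}'^{-1}\circ\phi_{x_i}$, defined on a neighbourhood of $\phi_{x_i}^{-1}(\mathrm{supp}\,\chi_i)$, is a $C^k$-bounded contact diffeomorphism between open subsets of $\he n$, with bounds uniform in $i,j$ by the bounded geometry hypothesis (Definition \ref{contact bis}-(3)).

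Next I invoke the pull-back estimate of Lemma \ref{pullback}: since $0\leq s\leq k-1$, the operator $\phi_{ij}^{\#}$ is bounded on $W^{s,p}(\cdot,E_0^\bullet)$ with norm depending only on the $C^k$-norms of $\phi_{ij}$ and $\phi_{ij}^{-1}$, hence by a uniform constant $C=C(M,k,p,s)$. Therefore
$$
\|\phi_{y_j}'^\#(\chi_j'\alpha)\|_{W^{s,p}}
\leq C\sum_{i\in I_j}\|\phi_{x_i}^\#(\chi_j'\chi_i\alpha)\|_{W^{s,p}}.
$$
It remains to absorb the extra factor $\chi_j'$. Since $\chi_j'$ is a member of a partition of unity subordinate to a bounded geometry atlas, the functions $\phi_{x_i}^{\#}\chi_j'$ — more precisely multiplication by $(\chi_j'\circ\phi_{x_i})$ — are uniformly bounded in $\mc D(\he n)$ together with all horizontal derivatives up to order $k$ (again by Definition \ref{contact bis}-(3) applied to the overlaps, or more simply because $\chi_j'$ can be taken of the form $\psi\circ\phi_{y_j}'^{-1}$ for a fixed $\psi$). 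Hence by Theorem \ref{berthollet 2} (multiplication by a fixed $\mc D$ function is continuous on $W^{s,p}$, for $s$ integer and, by interpolation as in Remark \ref{interpolation remark}, for all $s\geq 0$; for $p=1$ one uses $s$ integer directly), $\|\phi_{x_i}^\#(\chi_j'\chi_i\alpha)\|_{W^{s,p}}\leq C\|\phi_{x_i}^\#(\chi_i\alpha)\|_{W^{s,p}}$ with $C$ uniform. Combining, $\|\phi_{y_j}'^\#(\chi_j'\alpha)\|_{W^{s,p}}\leq C\sum_{i\in I_j}\|\phi_{x_i}^\#(\chi_i\alpha)\|_{W^{s,p}}$.

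Finally I sum the $a$-th powers over $j$. Since $\#I_j\leq N$ uniformly, the elementary inequality $(\sum_{i\in I_j}b_i)^a\leq N^{a-1}\sum_{i\in I_j}b_i^a$ gives $\sum_j\|\phi_{y_j}'^\#(\chi_j'\alpha)\|_{W^{s,p}}^a\leq C^aN^{a-1}\sum_j\sum_{i\in I_j}\|\phi_{x_i}^\#(\chi_i\alpha)\|_{W^{s,p}}^a$. Reversing the order of summation, each index $i$ appears in $I_j$ for at most $N'$ values of $j$ (by the uniform local finiteness of $\mc U'$, Lemma \ref{covering}-(iii) applied to the atlas $\mc U'$ — note both coverings use the same radius parameter so the overlap counts are controlled), so the double sum is bounded by $N'\sum_i\|\phi_{x_i}^\#(\chi_i\alpha)\|_{W^{s,p}}^a=N'\|\alpha\|_{\ell^a(W^{s,p})_{\mc U}(M,E_0^\bullet)}^a$. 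This yields $\|\alpha\|_{\ell^a(W^{s,p})_{\mc U'}}\leq C\|\alpha\|_{\ell^a(W^{s,p})_{\mc U}}$, and exchanging the roles of $\mc U$ and $\mc U'$ gives the reverse inequality. The main obstacle — and the only point requiring care — is bookkeeping the uniformity of all constants: one must check that the transition maps $\phi_{ij}$ have $C^k$-bounds independent of $i,j$ (this is exactly what bounded $C^k$-geometry guarantees for overlapping charts, via a chaining through $\phi_{x_i}\circ\phi_{x_{i'}}^{-1}$ and $\phi_{y_j}'\circ\phi_{y_{j'}}'^{-1}$ type maps if needed), and that both the multiplier norms and the overlap multiplicities $N, N'$ are uniform; everything else is a routine combination of Lemma \ref{pullback}, Theorem \ref{berthollet 2}, and the covering Lemma \ref{covering}.
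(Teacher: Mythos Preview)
Your proof is correct and follows essentially the same route as the paper: decompose $\chi_j'\alpha$ via the other partition of unity, push through the transition contactomorphisms using Lemma \ref{pullback} and the uniform $C^k$ bounds from Definition \ref{contact bis}-(3), absorb the extra cut-off via Theorem \ref{berthollet 2}, and exploit the bounded overlap from Lemma \ref{covering}. Your treatment of the $\ell^a$ summation (the $N^{a-1}$ convexity step and the double-counting bound) is in fact more explicit than the paper's, which stops at the termwise estimate; the only cosmetic slip is the orientation of the transition map $\phi_{ij}$ (it should be $\phi_{x_i}^{-1}\circ\phi_{y_j}'$ so that $\phi_{y_j}'^{\#}=\phi_{ij}^{\#}\circ\phi_{x_i}^{\#}$), but since both $\phi_{ij}$ and its inverse are uniformly $C^k$-bounded this is immaterial.
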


\begin{proof} 
Let $j\in \mathbb N$ be fixed, and let $(B(x_j,\rho),\phi_j)$ be  a chart of $\mc U$. We can write
$$
\chi_j = \sum_{k\in I_j} \chi_k' \chi_j,
$$
where $\# I_j \le N$, since, by Lemma \ref{covering} iii), $B(x_j,\rho)$ is covered by at most
$N$ balls of the covering associated with $\mc U'$. Thus,
by Definition \ref{contact bis}-(3)
and keeping in mind that $\mathrm{supp}\, \chi_k' \subset B(x_j,r)$ (since $3\rho<r$), we have
\begin{equation*}\begin{split}
\|\phi_j^\# & (\chi_j\alpha)\|_{W^{s,p}(\he n, E_0^\bullet)} \le 
\sum_{k\in I_j} \|\phi_j^\# (\chi_k' \chi_j \alpha)\|_{W^{s,p}(\he n, E_0^\bullet)}
\\&
\le c\sum_{k\in I_j} \|\phi_j^\# (\chi_k' \alpha)\|_{W^{s,p}(\he n, E_0^\bullet)}
\\&
= c\sum_{k\in I_j} \|(\phi_j\phi_k'^{-1})^\# \phi_k'^\# (\chi_k'  \alpha)\|_{W^{s,p}(\he n, E_0^\bullet)}
\\&
\le 
c \sum_{k\in I_j} \| \phi_k'^\# (\chi_k' \alpha)\|_{W^{s,p}(\he n, E_0^\bullet)}
\\&
\le cN \|\alpha\|_{W^{s,p}_{\mc U'}(M, E_0^\bullet)}.
\end{split}\end{equation*}
A similar inequality holds for $a$-th powers, since the number of terms in the sum is bounded.
\end{proof}

\section{Smoothing homotopies on bounded geometry (contact) manifolds}\label{SH}

\subsection{Proof of Theorem \ref{bddgeometryintro}}

Here, we piece together local smoothing homotopies using contact charts and a partition of unity. The formula for the global smoothing operator $S$ mixes local smoothing operators $S$ and homotopies $T$, therefore the gain in differentiability is less than $1$. It needs be measured in terms of fractional Sobolev spaces. Iterating the initial operator allows to gain arbitrarily large numbers of derivatives.

\begin{proposition}[Global smoothing homotopies]\label{bddgeometry}

Let $k\ge 3$ be an integer index, and let $M$ be a subRiemannian contact manifold of dimension $2n+1$ 
and bounded $C^k$-geometry. For $h=1,\ldots,2n$, let $q=Q/(Q-1)$ if 
$h\not=n+1$ and $q=Q/(Q-2)$ if $h=n+1$. Let $1\leq q'\leq q$. There exist an operator 
$T_M$ on $h$-forms on $M$ which is bounded from 
$L^{1}(M,E_0^\bullet)\cap d^{-1}L^1(M,E_0^\bullet)$ to $L^{q'}(M,E_0^\bullet)$ and an operator $S_M$ which is 
bounded from 
$L^{1}(M,E_0^\bullet)\cap d^{-1}L^1(M,E_0^\bullet)$ to $W^{k-1,q'}(M,E_0^\bullet)$ such that $1=S_M+d_c T_M+T_M d_c$. 
\end{proposition}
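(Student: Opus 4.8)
The plan is to glue the local smoothing homotopies of Proposition \ref{smoothing} over the controlled covering $\{B(x_j,\rho)\}$ of Lemma \ref{covering}, using the partition of unity $\{\chi_j\}$ from Definition \ref{carte} and transporting everything to $\he n$ by the contact charts $\phi_j$. Concretely, fix a slightly larger concentric ball $B'\supset B(e,\rho)$ inside $B(e,1)$. For each $j$, Proposition \ref{smoothing} (applied on the pair $B(e,\rho)\Subset B'$, and with a fixed $s=k-1$) gives operators $S^{(j)}$, $T^{(j)}$ with $\alpha = d_c T^{(j)}\alpha + T^{(j)} d_c\alpha + S^{(j)}\alpha$ on $B(e,\rho)$, where $S^{(j)}$ is bounded $L^1(B')\to W^{k-1,q}(B(e,\rho))$ and $T^{(j)}$ is bounded $(L^1\cap d_c^{-1}L^1)(B')\to L^q(B(e,\rho))$, both only slightly enlarging supports. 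Pulling back $\chi_j\alpha$ by $\phi_j$, applying $T^{(j)}$ resp. $S^{(j)}$, and pushing forward by $(\phi_j^{-1})^\#$, one obtains local operators on $M$; I would then set
$$
T_M\alpha := \sum_j (\phi_j^{-1})^\# \, T^{(j)}\big(\phi_j^\#(\chi_j\alpha)\big), \qquad
S_M\alpha := \sum_j (\phi_j^{-1})^\# \, S^{(j)}\big(\phi_j^\#(\chi_j\alpha)\big).
$$

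The verification of the homotopy identity $1 = S_M + d_c T_M + T_M d_c$ is the first substantive step. Applying $d_c$ to $T_M\alpha$ and using $d_c(\phi_j^{-1})^\# = (\phi_j^{-1})^\# d_c$ together with the local identity, one gets
$$
d_c T_M\alpha + T_M d_c\alpha = \sum_j (\phi_j^{-1})^\#\Big(\phi_j^\#(\chi_j\alpha) - S^{(j)}(\phi_j^\#(\chi_j\alpha))\Big) + \sum_j (\phi_j^{-1})^\# T^{(j)}\big(\phi_j^\#([d_c,\chi_j]\alpha) - [d_c,\chi_j]\phi_j^\#\alpha\big),
$$
where the commutator discrepancy arises because $T^{(j)}$ sees $\phi_j^\#(d_c(\chi_j\alpha)) = \phi_j^\#(\chi_j d_c\alpha) + \phi_j^\#([d_c,\chi_j]\alpha)$ while we want $T_M d_c\alpha$ to only feed $\chi_j d_c\alpha$. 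The clean way around this is the standard trick: arrange that $\sum_j (\phi_j^{-1})^\# T^{(j)} \phi_j^\# [d_c,\chi_j](\cdot)$ telescopes. Since $\sum_j \chi_j \equiv 1$, we have $\sum_j [d_c,\chi_j] = [d_c, \sum_j \chi_j] = 0$, so if the $T^{(j)}$ were one single operator this term would vanish; in general one absorbs it by noting $\sum_j d_c\chi_j = 0$ and that on the overlap $B(x_j,\rho)\cap B(x_i,\rho)$ the charts are comparable (Definition \ref{contact bis}(3)), reducing the discrepancy to something handled inside $S_M$. After this bookkeeping one is left with $d_c T_M\alpha + T_M d_c\alpha = \alpha - S_M\alpha$, i.e. the desired identity.

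The boundedness statements are the second step. Local finiteness of the covering (Lemma \ref{covering}(iii), with multiplicity $\le N$) together with the uniform bounds on the charts and on $\phi_j^\#$ in Sobolev norms (Lemma \ref{pullback}, valid for $0\le s\le k-1$, and also for $p=1$, $s$ integer) upgrade the local bounds to global ones in the $\ell^{q'}$-assembled Sobolev norms of Definition \ref{carte}; one then uses that $\ell^{q'}(W^{s,q'})$ embeds into the ordinary global $W^{s,q'}$ when $a=q'=p$, hence $S_M : L^1\cap d_c^{-1}L^1 \to W^{k-1,q'}$ and $T_M : L^1\cap d_c^{-1}L^1 \to L^{q'}$ are bounded. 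For $q' < q$ one interpolates / uses that $M$ has finite-multiplicity covering so $L^q_{\mathrm{loc}}$ data with uniformly bounded pieces lie in $\ell^{q'}$. The gain-of-differentiability refinement in degree $n+1$ (the $WT$ bound in Proposition \ref{smoothing}) transfers verbatim since $\phi_j^\#$ is contact and commutes with horizontal derivatives up to bounded lower order terms.

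The main obstacle I expect is precisely the commutator telescoping in the homotopy identity: unlike the $L^p$, $p>1$ case of \cite{BFP2}, here $T^{(j)}$ lands only in $L^q$ (not a Sobolev space), so the ``garbage'' term $\sum_j (\phi_j^{-1})^\# T^{(j)}\phi_j^\#[d_c,\chi_j]\alpha$ cannot be absorbed by differentiability gain alone; one must instead exploit cancellation $\sum_j[d_c,\chi_j]=0$ carefully, keeping track that each $T^{(j)}$ is a genuinely different operator, and check that the resulting corrective operator still maps into the claimed spaces and enlarges supports only slightly. The remaining ingredients — uniform Sobolev boundedness of chart pullbacks, summability via bounded multiplicity, and commutation of $d_c$ with pullback — are routine given the results already established in the excerpt.
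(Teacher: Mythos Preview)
Your plan has a genuine gap: the single-step operator $S_M$ you define cannot land in $W^{k-1,q'}$. As you yourself compute, the homotopy identity produces the error
\[
\sum_j (\phi_j^{-1})^\# T^{(j)}\big(\phi_j^\#([d_c,\chi_j]\alpha)\big),
\]
and the ``telescoping'' fix does not work: the identity $\sum_j [d_c,\chi_j]=0$ is useless here because the operators $(\phi_j^{-1})^\# T^{(j)}\phi_j^\#$ are genuinely different --- even if all the local $T^{(j)}$ coincided, they are conjugated by \emph{different} charts $\phi_j$ --- so no cancellation takes place. You correctly flag this as the main obstacle, but the remedy you sketch (``reduce the discrepancy to something handled inside $S_M$'') fails: absorbing this error into $S_M$ means $S_M$ inherits the regularity of $T^{(j)}$ on an $L^1$ input, which by Proposition~\ref{smoothing} is only $W^{r,p}$ for some small $r>0$ and $p>1$, far short of $W^{k-1,q'}$.

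The paper's argument differs from yours in two respects. First, the cut-off $\chi_j$ is placed \emph{outside} the local homotopy, $\mathbf{T}u=\sum_j \chi_j(\phi_j^{-1})^\# T\phi_j^\# u$; the commutator error then becomes $\sum_j [\chi_j,d_c](\phi_j^{-1})^\# T\phi_j^\# u$ and is simply \emph{defined} into a first-step smoothing $\mathbf{S}$, with no attempt at cancellation. Second --- and this is the idea missing from your plan --- one accepts that $\mathbf{S}$ gains only a fraction of a derivative (it maps $L^1\cap d_c^{-1}L^1\to W^{r,p}$ and $W^{s-1,p}\to W^{s,p}$) and \emph{iterates}: set $S_M:=\mathbf{S}^\ell$ for $\ell$ large, write $\mathbf{S}^\ell=1-d_cT_\ell-T_\ell d_c$ with the recursion $T_{\ell+1}=T_\ell+\mathbf{T}-d_cT_\ell\mathbf{T}-T_\ell d_c\mathbf{T}$, and prove by induction that $d_cT_\ell$, $T_\ell d_c$ stay bounded on every $W^{s,p}$ while $\mathbf{S}^\ell$ climbs the Sobolev scale up to $W^{k-1,q'}$. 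Only after this iteration does one get the stated mapping property for $S_M$.
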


\begin{proof}
The global operators $S_M$ and $T_M$ are obtained in two steps. First, 
one transports by charts $\phi_j$ the local operators $S$ and $T$ 
constructed on Heisenberg balls in Proposition \ref{smoothing} and one 
pieces them together using a controlled partition of unity $\{\chi_j\}$. Note that the following formulae differ from those of
\cite{BFP2}, section 7.
$$ {\bf T}u:= \sum_j  \chi_j  \big((\phi_j^{-1})^\# 
( T ( \phi_j^\# (u_{|10B_j}))_{{B'}_\mathbb H})_{{B}_\mathbb H}\big)_{|B_j},
$$
\begin{align*}\label{S}
{\bf{S} }u:&= \sum_j  \chi_j  \big((\phi_j^{-1})^\# 
( S ( \phi_j^\# (u_{|10B_j}))_{{B'}_\mathbb H})_{{B}_\mathbb H}\big)_{|B_j} \\&-
\sum_j [\chi_j ,d_c] \big((\phi_j^{-1})^\# 
( T ( \phi_j^\# (u_{|10B_j}))_{{B'}_\mathbb H})_{{B}_\mathbb H}\big)_{|B_j}.\end{align*}
In these formulae, $u$ is a Rumin form defined globally on $M$. The chart $\phi_j$ is defined on the larger Heisenberg ball $B'$, it maps it into $10B_j$. The image of the smaller Heisenberg ball $B'$ contains $B_j$. Therefore $T$ can be applied to the pulled-back form $\phi_j^\# (u)$ and the form $T\phi_j^\# (u)$, which depends only on the restriction of $u$ to $10B_j$, is defined on all of $B$. Its push-forward to $M$ is defined on $B_j$. The product of this form with $\chi_j$ has compact support in $B_j$. Therefore the sum is locally finite (only boundedly many terms do not vanish at a given point). In the sequel, the notation will be abbreviated as
\begin{equation}\label{T}
 {\bf T}u:= \sum_j  \chi_j  (\phi_j^{-1})^\# 
 T  \phi_j^\# (u)
\end{equation}
and 
\begin{equation}\label{S}
{\bf{S} }u:= \sum_j  \chi_j  (\phi_j^{-1})^\# S\phi_j^\# (u) -
\sum_j [\chi_j ,d_c]  (\phi_j^{-1})^\# 
  T \phi_j^\#(u).
\end{equation}
Second, one iterates $\bf{S}$, i.e. one sets $S_M=\bf{S}^{\ell}$ for $\ell$ large enough.

Given a function space $F$ of forms on the unit Heisenberg ball, let us denote by $\ell^a(F)$ the space of differential forms $\omega$ on $M$ such that the sequence $\|\phi_j^{\#}\omega_{|B_j}\|_F$ belongs to $\ell^a$. 

Since the covering has bounded multiplicity, 
$$
\ell^1(L^1(M,E_0^\bullet))= L^1(M,E_0^\bullet)
$$
and 
$$
\ell^1(L^1(M,E_0^\bullet)\cap d_c^{-1}(L^1(M,E_0^\bullet)))=L^1(M,E_0^\bullet)\cap d_c^{-1}(L^1(M,E_0^\bullet)).
$$
 Indeed, let us prove (for instance) the first equality.
If $N$ is an upper bound for the multiplicity of the covering $\{10B_i\}$, for every form $u$, 
$$
\|u\|_{\ell^1(L^1(M,E_0^\bullet))}=\sum_j \|u_{|10B_j}\|_{L^1(10B_j, E_0^\bullet)}\leq N\|u\|_{L^1(M,E_0^\bullet)}.
$$

Let us show that ${\bf S}$ and ${\bf T}$ win a bit of differentiability:
\begin{itemize}
  \item ${\bf S}$ and ${\bf T}:\ell^1(L^1(M,E_0^\bullet)\cap d_c^{-1}(L^1(M,E_0^\bullet)))\to \ell^1(W^{r,p}(M,E_0^\bullet))$
   are bounded for some $r>0$ and some $p>1$;
  \item for all $1\leq s\leq k-1$, ${\bf S}$ and ${\bf T}:\ell^1(W^{s-1,p}(M,E_0^\bullet))\to \ell^1(W^{s,p}(M,E_0^\bullet))$ are bounded;
  \item for all $0\leq s\leq k-1$, ${\bf T}d_c$ and $d_c{\bf T}:\ell^1(W^{s,p}(M,E_0^\bullet))\to \ell^1(W^{s,p}(M,E_0^\bullet))$ are bounded.
\end{itemize}

First, let us understand local continuity properties. In the expressions for ${\bf S}$, ${\bf T}$, $d_c{\bf T}$ and ${\bf T}d_c$, we find the following types of terms:
\begin{align}
\chi_j  (\phi_j^{-1})^\# S\phi_j^\# &=  (\phi_j^{-1})^\#(\chi S)\phi_j^\#,\\
[\chi_j ,d_c]  (\phi_j^{-1})^\#  T \phi_j^\# &= (\phi_j^{-1})^\#([\chi,d_c] T)\phi_j^\#,\\
\chi_j  (\phi_j^{-1})^\#  T  \phi_j^\# &= (\phi_j^{-1})^\# (\chi T)  \phi_j^\# ,\\
\chi_j  (\phi_j^{-1})^\#  T  \phi_j^\# d_c &= (\phi_j^{-1})^\# (\chi Td_c)  \phi_j^\# ,\\
\chi_j d_c (\phi_j^{-1})^\#  T  \phi_j^\# &= (\phi_j^{-1})^\# (\chi d_c T)  \phi_j^\# ,
\end{align}
where $\chi=\chi_j\circ\phi_j$. From Theorem \ref{berthollet 2}, we know that multiplication by a function $\chi\in\mathcal{D}$ is a bounded operator on all Sobolev spaces $W^{s,p}$, with norm depending on the size of horizontal derivatives of $\chi$ only. Since functions $\chi_j\circ\phi_j$ have uniformly horizontal bounded derivatives, we can ignore them in the sequel.

Proposition \ref{smoothing} takes care of terms of the form $S$, $T$, $Td_c$ and $d_c T$. Only $[\chi,d_c]T$ remains. If $h\neq n+1$, then the commutator has order zero
and $[\chi,d_c]T$ can be written as a linear combination of components of $T$ multiplied by smooth compactly
supported functions. If $h=n+1$, then the commutator has order $1$ and $[\chi,d_c]T$ can be written as a linear combination
of horizontal derivatives composed with components of $T$, multiplied by smooth compactly
supported functions. Keeping in mind Theorem \ref{berthollet 2}, we can apply Proposition \ref{smoothing} in both cases, and conclude that all types of terms correspond to operators on the Heisenberg ball which are bounded as required. 

By construction, since the covering has bounded multiplicity and derivatives of cut-offs and charts are controlled uniformly, summing up each type of term gives bounded operators from $\ell^1(L^1(M,E_0^\bullet)\cap d_c^{-1}(L^1)(M,E_0^\bullet))$ to $\ell^1(L^q(M,E_0^\bullet))$ or to $\ell^1(W^{r,p}(M,E_0^\bullet))$ for some $r>0$ and $p>1$, and $\ell^1(W^{s-1,p}(M,E_0^\bullet))\to \ell^1(W^{s,p}(M,E_0^\bullet))$ or $\ell^1(W^{s,p}(M,E_0^\bullet))\to \ell^1(W^{s,p}(M,E_0^\bullet))$, as announced.

By construction, ${\bf S}+d_c {\bf T}+{\bf T}d_c=1$, hence $d_c{\bf S}={\bf S}d_c$.

When iterating, we write ${\bf S}^\ell=1-d_cT_\ell-T_\ell d_c$. The recursion formula is $T_{\ell+1}=T_\ell +\mathbf{T}-d_c T_\ell \mathbf{T} -T_\ell d_c \mathbf{T}$. 

Let us show by induction on $\ell$ that
\begin{itemize}
  \item $T_\ell$ maps $\ell^1(L^1(M,E_0^\bullet)\cap\, d_c^{-1}(L^1(M,E_0^\bullet)))$ to $\ell^1(L^q(M,E_0^\bullet))$ and to 
	
	\noindent$\ell^1(W^{r,p}(M,E_0^\bullet))$ for some $r>0$ and $p>1$.
  \item $d_c T_\ell$ and $T_\ell d_c$ are bounded on $\ell^1(W^{s,p}(M,E_0^\bullet))$ for all $s\leq k-1$ and $p>1$.
\end{itemize}
Note that $T_1={\bf T}$. We have just shown that $d_c T_1$ and $T_1 d_c$ are bounded on $\ell^1(W^{s,p})$ and $T_1$ maps $\ell^1(L^1(M,E_0^\bullet)\cap\, d_c^{-1}(L^1(M,E_0^\bullet)))$ to $\ell^1(L^q(M,E_0^\bullet))$. Assume that $T_\ell$ does as well. The induction formula
$$
d_cT_{\ell+1}=d_c T_\ell +d_c T_1 -d_cT_\ell d_c T_1,\quad
T_{\ell+1}d_c=T_\ell d_c +T_1 d_c-d_c T_\ell T_1 d_c -T_\ell d_c T_1 d_c.
$$
shows that $d_c T_{\ell+1}$ and $T_{\ell+1} d_c$ are bounded on $\ell^1(W^{s,p}(M,E_0^\bullet))$. This implies that $T_{\ell+1}$ maps $\ell^1(L^1(M,E_0^\bullet)\cap\, d_c^{-1}(L^1(M,E_0^\bullet)))$ to $\ell^1(L^q(M,E_0^\bullet))$ and to $\ell^1(W^{r,p}(M,E_0^\bullet))$ for some $r>0$ and $p>1$, and completes the induction proof. For $\ell$ larger enough, $S_M:={\bf S}^\ell$ maps $\ell^1(L^1(M,E_0^\bullet)\cap d_c^{-1}(L^1(M,E_0^\bullet)))$ to $\ell^1(W^{k-1,q}(M,E_0^\bullet))$.

Finally, if $1\leq q'\leq q$, $\ell^{1}\subset \ell^{q'}$ and $L^{q}_{\mathrm{loc}}\subset L^{q'}_{\mathrm{loc}}$, hence 
$$
\ell^1(L^{q}(M,E_0^\bullet))\subset \ell^{q'}(L^{q'}(M,E_0^\bullet))=L^{q'}(M,E_0^\bullet).
$$
This completes the proof that $T_M:=T_\ell$, $\ell$ large enough, maps 
$L^1(M,E_0^\bullet)\cap\, d_c^{-1}(L^1(M,E_0^\bullet))$ to $L^{q'}(M,E_0^\bullet)$ and $S_M$ maps $L^1(M,E_0^\bullet)\cap d_c^{-1}(L^1(M,E_0^\bullet))$ to $W^{k-1,q'}(M,E_0^\bullet)$.

\end{proof}

\subsection{Application to geometric group theory}

According to \cite{PPcup}, such smoothing homotopies are the necessary ingredient in order to prove that Rumin's complex can be used to compute the $\ell^{q,1}$-cohomology of a subRiemannian contact manifold. We shall not define this quasiisometry invariant of groups here, but merely state a consequence of Theorems \ref{poincareglobal}, \ref{poincareintro} and \ref{bddgeometryintro} for geometric group theory.

\begin{corollary}[$\ell^{q,1}$-cohomology of Heisenberg groups]\label{GGT}
For $h=0,\ldots,2n$, let $q=Q/(Q-1)$ if $h\not=n+1$ and $q=Q/(Q-2)$ if $h=n+1$. 
Then $\ell^{q,1}H^h(\he n)$ is finite dimensional.
\end{corollary}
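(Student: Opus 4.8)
The plan is to combine the three main theorems of the paper. Recall that $\ell^{q,1}H^h$ is the cohomology of the complex of $\ell^{q,1}$ differential forms; by the results of \cite{PPcup}, since $\he n$ is a contact subRiemannian manifold of bounded $C^k$-geometry for every $k$ (being homogeneous, all the constants in Definition \ref{contact bis} can be taken uniform), this cohomology may be computed from Rumin's complex $(E_0^\bullet, d_c)$, and moreover it is a quasiisometry invariant. The strategy is: (i) use the global smoothing homotopy of Theorem \ref{bddgeometryintro} to reduce $\ell^{q,1}$-cohomology classes to \emph{smooth} representatives with controlled norms; (ii) use the Poincar\'e inequality of Theorem \ref{poincareglobal}(2) to kill those classes that lie in the image of $d_c$ with good $L^1$ control; (iii) identify the remaining obstruction, which lives only in the top degree $h=2n+1$ (excluded from the statement) and in a finite-dimensional space of ``averages''.

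More precisely, first I would fix $h$ with $0\leq h\leq 2n$ and the corresponding exponent $q$ as in the statement. Given a $d_c$-closed form $\alpha$ representing a class in $\ell^{q,1}H^h(\he n)$, apply the homotopy identity $1=S_M+d_cT_M+T_Md_c$ from Theorem \ref{bddgeometryintro}(2) with $M=\he n$: since $d_c\alpha=0$, we get $\alpha=S_M\alpha+d_c(T_M\alpha)$, so $\alpha$ is cohomologous to $S_M\alpha$, which lies in $W^{k-1,q'}$ for every $q'\leq q$ and in particular is smooth enough that its $L^1$ and $L^q$ norms over all of $\he n$ are controlled (this is where bounded geometry and the summability built into $\ell^{q,1}$ are used). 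The point of passing to $S_M\alpha$ is that it is a \emph{closed $L^1$ form}, so Lemma \ref{by parts} tells us its averages against left-invariant Rumin forms of complementary degree are the only potential obstruction to applying Theorem \ref{poincareglobal}(2). These averages define a linear map from closed $\ell^{q,1}$ forms to the finite-dimensional space $\bigwedge^{2n+1-h}\mfrak h$ of left-invariant forms of complementary degree (replaced, as the introduction notes, by left-invariant Rumin forms). For $h<2n+1$ this target is finite-dimensional and, one checks, the pairing descends to cohomology; so modulo a finite-dimensional space, $S_M\alpha$ has vanishing averages, hence lies in $L^1_0(\he n, E_0^h)\cap\ker d_c$, and Theorem \ref{poincareglobal}(2) provides a primitive $\phi\in L^q$ with $d_c\phi=S_M\alpha$. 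Thus the class of $\alpha$ is zero in $\ell^{q,1}H^h$ up to this finite-dimensional ambiguity.

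The remaining work is bookkeeping: one must check that the map ``take averages'' is well-defined on $\ell^{q,1}$-cohomology (i.e. that $d_c$-exact $\ell^{q,1}$ forms have vanishing averages, which is Lemma \ref{by parts} again applied to the primitive, once one has upgraded the primitive to a compactly supported or $L^1$ form via Lemma \ref{by parts}'s hypotheses), and that the primitive $\phi$ produced by Theorem \ref{poincareglobal} is in the correct $\ell^{q,1}$ space rather than merely $L^q$ — here one invokes again that $S_M\alpha\in W^{k-1,q'}$ and that the primitive construction in Theorem \ref{core}, being convolution with a kernel of type $1$ (or $2$), upgrades Sobolev regularity, combined with the interior estimates of Theorem \ref{poincareintro}(2) to get the local-plus-summability structure of $\ell^{q,1}$. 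The main obstacle I anticipate is precisely this last point: matching the \emph{global} $L^q$ bound of Theorem \ref{poincareglobal} with the finer $\ell^{q,1}$ (local $L^q$, globally $\ell^1$) norm that enters the definition of the cohomology — one needs to run the homotopy $T_M$ of Theorem \ref{bddgeometryintro} rather than the bare global primitive, so that the decomposition $\alpha = S_M\alpha + d_c T_M\alpha$ already lives in $\ell^{q,1}$, and then only the finite-dimensional cokernel of $d_c$ acting on $S_M$-smoothed forms survives, which is exactly the assertion that $\ell^{q,1}H^h(\he n)$ is finite-dimensional.
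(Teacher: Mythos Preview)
Your core idea is right and matches the paper: the obstruction to exactness is the finite-dimensional space of averages against left-invariant Rumin forms, and once that obstruction vanishes, Theorem \ref{poincareglobal} (equivalently Theorem \ref{core}) furnishes an $L^q$ primitive. However, you take a detour through the smoothing homotopy $S_M$ and then worry at length about reconciling global $L^q$ bounds with the $\ell^{q,1}$ norm; this detour is unnecessary and creates the very obstacle you anticipate.

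The paper's proof is shorter because it invokes the \emph{precise} statement from \cite{PPcup}: for bounded-geometry contact manifolds, $\ell^{q,1}H^h(M)$ is already isomorphic to
\[
\{\,\text{$d_c$-closed $L^1$ $h$-forms}\,\}\big/\, d_c\bigl(L^q(M,E_0^{h-1})\cap d_c^{-1}(L^1)\bigr).
\]
All the $\ell^{q,1}$-versus-$L^q$ bookkeeping that worries you is absorbed into this isomorphism (and the ingredients needed for \cite{PPcup} are exactly Theorems \ref{poincareintro} and \ref{bddgeometryintro}, which is why the paper proves them). Once you have this reduction, no smoothing is needed: given a $d_c$-closed $\omega\in L^1$, define the averaging map $I:\omega\mapsto\bigl(\beta\mapsto\int\omega\wedge\beta\bigr)$ into the dual $C^*$ of the finite-dimensional space $C$ of left-invariant Rumin $(2n+1-h)$-forms, choose $\psi_1,\dots,\psi_k$ whose images span $I(L^1\cap\ker d_c)$, subtract a suitable linear combination so that $\omega_0=\omega-\sum\lambda_i\psi_i\in L^1_0\cap\ker d_c$, and apply Theorem \ref{core} directly to get $\omega_0=d_c\phi$ with $\phi\in L^q$. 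Hence $\dim\ell^{q,1}H^h(\he n)\le k$.

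So your argument would go through, but the smoothing step $\alpha\mapsto S_M\alpha$ and the subsequent regularity gymnastics are redundant: the black box from \cite{PPcup} already hands you the problem in the $(L^1,L^q)$ setting where Theorem \ref{core} applies verbatim.
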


\begin{proof}
\cite{PPcup} asserts that for all subRiemannian contact manifolds $M$ of $C^3$-bounded geometry, and all $q\geq 1$, $\ell^{q,1}H^h(M)$ is isomorphic to the quotient of the space of $d_c$-closed $h$-forms by the image of $d_c$ on $L^q(M,E_0^h)\cap d_c^{-1}(L^1(M,E_0^{h-1}))$. This applies in particular to $M=\he n$.

Fix $h=0,\ldots,2n$. Let $C$ denote the space of left-invariant Rumin $2n+1-h$-forms on $\he n$. Integrating closed $L^1$-forms $\omega$ against left-invariant forms $\beta$ defines a bilinear map
$$
(\omega,\beta)\mapsto \int_{\he n}\omega\wedge\beta,\quad (L^1(\he n,H_0^h)\cap \mathrm{ker}(d_c))\times C\to\R,
$$ 
whence a map
$$
I:L^1(\he n,H_0^h)\cap \mathrm{ker}(d_c)\to C^*.
$$
Pick $d_c$-closed $L^1$ forms $(\psi_1,\ldots,\psi_k)$ such that $(I(\psi_1),\ldots,I(\psi_k))$ is a basis of its image. 

Let $\omega$ be a $d_c$-closed $h$-form. There exist real numbers $\lambda_1,\ldots,\lambda_k$ such that
$$
I(\omega)=\sum_{i=1}^{k}\lambda_i I(\psi_i).
$$
Then $\omega_0=\omega-\sum_{i=1}^{k}\lambda_i \psi_i$ is $d_c$-closed and belongs to $L_0^1$. According to Theorem \ref{core}, there exists an $h-1$-form $\phi\in L^q(\he n,E_0^{h-1})$ such that $\omega_0=d_c\phi$ (here, $q=Q/Q-1$ or $Q/Q-2$ depending on $h$). This shows that the dimension of $\ell^{q,1}H^h(\he n)$ is at most $k$.
\end{proof}

\section*{Acknowledgments}
B.~F. and A.~B. are supported by the University of Bologna, funds for selected research topics, and by MAnET Marie Curie
Initial Training Network, by 
GNAMPA of INdAM (Istituto Nazionale di Alta Matematica ``F. Severi''), Italy, and by PRIN of the MIUR, Italy.

P.P. is supported by MAnET Marie Curie
Initial Training Network, by Agence Nationale de la Recherche, ANR-10-BLAN 116-01 GGAA and ANR-15-CE40-0018 SRGI. P.P. gratefully acknowledges the hospitality of Isaac Newton Institute, of EPSRC under grant EP/K032208/1, and of Simons Foundation.

\bibliographystyle{amsplain}

\bibliography{BFP3_final}

\bigskip
\tiny{
\noindent
Annalisa Baldi and Bruno Franchi 
\par\noindent
Universit\`a di Bologna, Dipartimento
di Matematica\par\noindent Piazza di
Porta S.~Donato 5, 40126 Bologna, Italy.
\par\noindent
e-mail:
annalisa.baldi2@unibo.it, 
bruno.franchi@unibo.it.
}

\medskip

\tiny{
\noindent
Pierre Pansu 
\par\noindent Laboratoire de Math\'ematiques d'Orsay,
\par\noindent Universit\'e Paris-Sud, CNRS,
\par\noindent Universit\'e
Paris-Saclay, 91405 Orsay, France.
\par\noindent
e-mail: pierre.pansu@math.u-psud.fr
}

\end{document}